\newcolumntype{L}[1]{>{\raggedright\let\newline\\\arraybackslash\hspace{0pt}}m{#1}}
\newcolumntype{C}[1]{>{\centering\let\newline\\\arraybackslash\hspace{0pt}}m{#1}}
\newcolumntype{R}[1]{>{\raggedleft\let\newline\\\arraybackslash\hspace{0pt}}m{#1}}
\tikzset{
    node/.style = {rectangle, align=center, minimum size=.5cm},
    box/.style = {rectangle, align=center, minimum size=.8cm, thick, draw=black, text=black, fill=white},
    tnode/.style = {rectangle, align=center, inner sep=0cm, outer sep=0cm, minimum size=0cm},
    mirrorbox/.style = {rectangle, align=center, minimum size=.8cm, thick, draw=black, text=white, fill=black},
    smallnode/.style = {circle, draw, fill=black, align=center, minimum size=.1cm, 
                        inner sep=0cm, outer sep=0.05cm},
    vsmallnode/.style = {circle, draw, fill=black, align=center, minimum size=.07cm, 
                        inner sep=0cm, outer sep=0.05cm},
    leaf/.style = {rectangle, draw, fill=black, align=center, minimum size=.1cm, 
                        inner sep=0cm, outer sep=0.05cm},
    empty/.style = {smallnode, draw, fill=white},
    phantom/.style = {smallnode, draw=white, fill=white, minimum size=0cm},
    dot/.style = {circle, anchor=center, align=center, fill=black,
                outer sep=0, inner sep=.3ex},
    point/.style = {circle, anchor=center, align=center,
                outer sep=0, inner sep=0},
    arrowhead/.style = {isosceles triangle, draw, fill=black, inner sep=0, outer sep=.05cm, 
                minimum size=.13cm},
    labelleft/.style = {left = .1 of #1.south west, outer sep=0, inner sep=0, anchor=base east},
    labelright/.style = {right = .1 of #1.south east, outer sep=0, inner sep=0, anchor=base west},
    labelabove/.style = {above right = .05 and -.1 of #1.north, outer sep=0, inner sep=0, anchor=base west},
    labelaboveright/.style = {above right = .05 and .1 of #1.north, outer sep=0, inner sep=0, anchor=base west},
    labelaboveleft/.style = {above left = .05 and .1 of #1.north, outer sep=0, inner sep=0, anchor=base west},
    labelbelow/.style = {below right = .3 and -.1 of #1.south, outer sep=0, inner sep=0, anchor=base west},
    strand/.style={thick, looseness=1.2},
    arrow/.style = {-Latex},
    dottededge/.style = {dotted, thick},
    dirarrow/.style={-Latex, dashed, thin, blue, looseness=1.2},
    midarrow/.style={postaction={decorate,decoration={
                markings, mark=at position .7 with {\arrow{Latex[fill=white, scale=1.5]}}}}},
    tree/.style = {matrix, row sep={1cm,between origins},
                column sep={1cm,between origins}, matrix anchor=#1.center,
                every node/.style={smallnode}},
    tree_mid/.style = {matrix, row sep={.7cm,between origins},
                column sep={.7cm,between origins}, matrix anchor=#1.center,
                every node/.style={smallnode}},
    tree_small/.style = {matrix, row sep={.5cm,between origins},
                column sep={.5cm,between origins}, matrix anchor=#1.center,
                every node/.style={vsmallnode}}
}
\theoremstyle{plain}%
\newtheorem{theorem}{Theorem}%
\newtheorem{conjecture}{Conjecture}
\newtheorem{lemma}{Lemma}
\theoremstyle{definition}%
\newtheorem{definition}{Definition}%
\theoremstyle{remark}%
\newcommand{\qq}[1]{``#1''}
\newcommand{\subfigref}[2]{\hyperref[#1]{Figure~\ref*{#1}#2}}
\newcommand{\tables}[1]{\hyperref[#1]{Tables~\ref*{#1}}}
\newcommand{\tangle}{\mathcal{T}}
\newcommand{\algebraic}{\tangle_\mathbb{A}}
\newcommand{\rational}{\tangle_\mathbb{Q}}
\newcommand{\integral}{\tangle_\mathbb{Z}}
\newcommand{\basic}{\tangle_\mathds{1}} 
\newcommand{\defeq}{\mathrel{\overset{\makebox[0pt]{\mbox{\tiny def.}}}{\;=\;}}}
\newcommand{\reidone}{\mathrel{\overset{\makebox[0pt]{\mbox{\tiny I}}}{\;=\;}}}
\newcommand{\reidtwo}{\mathrel{\overset{\makebox[0pt]{\mbox{\tiny II}}}{\;=\;}}}
\newcommand{\reidthree}{\mathrel{\overset{\makebox[0pt]{\mbox{\tiny III}}}{\;=\;}}}
\newcommand{\flype}{\mathrel{\overset{\makebox[0pt]{\mbox{\tiny F}}}{\;=\;}}}
\newcommand{\twist}{\mathrel{\overset{\makebox[0pt]{\mbox{\tiny T}}}{\;=\;}}}
\newcommand{\ring}{\mathrel{\overset{\makebox[0pt]{\mbox{$\circ$}}}{\;=\;}}}
\newcommand{\arra}[1]{\xrightarrow{\alpha_{#1}}}
\newcommand{\arrb}[1]{\xrightarrow{\beta_{#1}}}
\newlength\rightshift
\begin{document}
\title[Classification of algebraic tangles]{Classification of algebraic tangles}

\author*[1]{\fnm{Bartosz A.} \sur{Gren}\,\orcidlink{0000-0001-9971-6807}} \email{b.gren@cent.uw.edu.pl}
\author[1]{\fnm{Joanna I.} \sur{Sulkowska}\,\orcidlink{0000-0003-2452-0724}} \email{j.sulkowska@cent.uw.edu.pl}
\author[2,3]{\fnm{Bo\v{s}tjan} \sur{Gabrov\v{s}ek}\,\orcidlink{0000-0002-8272-5392}} \email{bostjan.gabrovsek@pef.uni-lj.si}

\affil[1]{\orgname{University of Warsaw, Centre of New Technology}, \orgaddress{S. Banacha 2C, \city{02-097 Warsaw}, \country{Poland}}}
\affil[2]{\orgname{University of Ljubjana, Faculty of Education, Kardeljeva plo\v{s}\v{c}ad 16}, \orgaddress{\city{1000 Ljubjana}, \country{Slovenia}}}
\affil[3]{\orgname{Rudolfovo – Science and Technology Centre}, \orgaddress{Podbreznik 15, \city{8000 Novo mesto}, \country{Slovenia}}}

\abstract{
We study algebraic tangles as fundamental components in knot theory, developing a systematic approach to classify and tabulate prime tangles using a novel canonical representation. The canonical representation enables us to distinguish mutant tangles, which fills the gaps in previous classifications. Moreover, we increase the classification of prime tangles up to 14 crossings and analyze tangle symmetry groups. We provide a database of our results: \url{https://tangleinfo.cent.uw.edu.pl}.
}

\keywords{Tangles, Algebraic Tangles, Rational Tangles, Mutant knots, Knot theory, Canonical form}

\maketitle
\section{Introduction}

The formal study of tangles was introduced by John Conway in the 1970s \cite{conway1970enumeration}. Conway’s work revolutionized the way mathematicians approached knot theory, providing new methods for decomposing and analyzing knots. Conway showed that a knot or link can be decomposed into tangles, allowing us to represent knots and links in a concise and systematic way. 
A tangle consists of two arcs and a finite number of circles embedded in a three-dimensional ball such that the endpoints of the arcs lie on the boundary of the ball. 

A particularly important subclass of tangles are rational tangles,
which, due to their simplicity, have been extensively studied in the literature, as the continued fraction associated with a rational tangle is a perfect invariant. 
One notable application of studying tangles is in the development of knot tables, since tangles offer a very compact and efficient way to encode a knot.

However, creating knot tables through the use of tangles involves the classification of algebraic tangles~\cite{conway1970enumeration}, which are much more difficult to study, and the theory of algebraic tangles remains undeveloped -- a gap that this paper aims to address. 
Knot tangle decompositions were also studied in the context of arborescent knots by Bonahon and Siebenmann \cite{bonahon2010new}.

The concept of knot classification follows a straightforward approach: first, construct an initial set of knot diagrams that comprehensively includes all possible knots; then, systematically eliminate duplicate representations to obtain a distinct classification.
The second step presents a computational bottleneck; therefore, it is advantageous to start with the smallest possible initial set. A naive approach to generate an initial set of knots with up to $N$ crossings involves a generation of all possible 4-valent planar graphs (with at most $N$ vertices) and then a replacement of the vertices with crossings.
A more effective approach is to generate 4-valent polyhedral graphs and replace the vertices with algebraic tangles -- using this method, Conway managed to tabulate knots with up to 11 crossings and links with up to 10 crossings by hand \cite{conway1970enumeration}.
Due to the lack of a tabulation of algebraic tangles, later classifications were created using the naive approach, supplemented by tangle properties, which led to the construction of knot tables up to 13 crossings (in 1983), 16 crossings (in 1998), and 19 crossings (in 2012) \cite{dowker1983classification, hoste1998first, burton2020next}. With a tabulation of algebraic tangles, knot and link tables could be easily extended.

{Beyond their role in knot classification, tangles have found diverse applications in knot theory; for instance, decomposing a knot into tangles is essential for faster computations of the computationally intensive Khovanov homology~\cite{bar2007fast}, a theory that provides a powerful categorification of the Jones polynomial and has advanced the study of knot invariants, 4-dimensional topology, and quantum algebra.}

Our paper is highly motivated by the need to create tables of various entangled structures that appear in the study of biological structures. 
Knot tables initiated the study of entanglement in proteins \cite{mansfield1994there,taylor2000deeply,sulkowska2012conservation}; later, proteins with lasso motifs were also studied \cite{niemyska2016complex,dabrowski2016lassoprot,gren2021lasso}.
Moriuchi’s work on classifying $\Theta$-curves and handcuff links up to seven crossings~\cite{moriuchi2008enumeration} was essential for the study of $\Theta$-curves in proteins~\cite{dabrowski2024theta, bruno2024knots, sulkowska2020folding}, and we wish to extend these studies to analyse protein as bonded knots \cite{gabrovvsek2021invariant, adams2020knot,dabrowski2019knotprot}, bonded links \cite{dabrowski2016linkprot,dabrowski2017topological}, and bonded knotoids \cite{goundaroulis2020knotoids, gabrovvsek2023invariants, gugumcu2022invariants}.

{
Beyond topological classification, tangle theory also plays a crucial role in understanding biological processes. For example, the study of topoisomerase enzymes~\cite{cameron2023coinhibition}, which manage the topology of DNA, illustrates how tangles are manipulated during cellular processes. Tangles are also used to understand the mechanisms of DNA recombination~\cite{Ernst_Sumners_1990, sumners2011dna}.
}

{This paper is structured as follows: In \autoref{sec:preliminaries}, we provide the necessary background and key definitions to establish the scope of the paper. In \autoref{sec:isotopic_moves}, we define isotopy preserving moves, up to which we provide a classification. In \autoref{sec:tree}, we introduce a binary tree notation as the foundation for a canonical representation of algebraic tangles, the central objective of this article. We further propose an algorithm to construct this representation and establish its uniqueness. In \autoref{sec:classification}, we present our tangle classification, which is further expanded in \autoref{sec:symmetries} where we study tangle symmetries.}

\section{Preliminaries}\label{sec:preliminaries}

\begin{figure}[t]
    \begin{tikzpicture}[x={(1.2cm,0)}, y={(0,1.2cm)}, z={(-.4cm, -.3cm)}, baseline=(a), scale=1.1]
        \coordinate (zero) at (0,0);
        \node[node] (a) at (-1.8,1.6) {a)};
        \node[dot, label={45:{{\small NE}}}] at (1,1,0) (NE) {};
        \node[dot, label={135:{{\small NW}}}] at (-1,1,0) (NW) {};
        \node[dot, label={-135:{{\small SW}}}] at (-1,-1,0) (SW) {};
        \node[dot, label={-45:{{\small SE}}}] at (1,-1,0) (SE) {};
        \draw[help lines, red, thick];
        \draw[very thick]
            circle[radius=1.41]
            (0,1.41) arc(90:270:.4 and 1.41)
            (-1.41,0) arc(180:360:1.41 and .3);
        \draw[very thick, loosely dashed, dash phase=2pt, cap=round] 
            (0,-1.41) arc(-90:90:.4 and 1.41);
        \draw[very thick, loosely dashed, cap=round] 
            (1.41,0) arc(0:180:1.41 and .3);
        \node[point, label=90:{$x$}] at (1.8,0,0) (xcoor) {};
        \node[point, label=-30:{$y$}] at (0,1.8,0) (ycoor) {};
        \node[point, label=-45:{$z$}] at (0,0,2) (zcoor) {};
        \draw[arrow] (0,0,0) -- (xcoor);
        \draw[arrow] (0,0,0) -- (ycoor);
        \draw[arrow] (0,0,0) -- (zcoor);
        \draw[dotted, semithick, cap=round] 
            (NW) -- (SE) node[below, pos=.05, scale=1] {$p$} ;
    \end{tikzpicture}
    \begin{tikzpicture}[baseline=(b), scale=.55]
        \node[node] (b) at (-1.7,1.3) {b)};
        \node[node, anchor=base] at (0,-1.6) {$0$};
        \node[dot] at (1,1) (NE0) {};
        \node[dot] at (-1,1) (NW0) {};
        \node[dot] at (-1,-1) (SW0) {};
        \node[dot] at (1,-1) (SE0) {};
        \draw[strand]
            (NW0) edge[out=-45, in=-135] (NE0)
            (SW0) edge[out=45, in=135] (SE0);
        \node[node, anchor=base] at (3,-1.6) {$\infty$};
        \node[dot] at (4,1) (NEinf) {};
        \node[dot] at (2,1) (NWinf) {};
        \node[dot] at (2,-1) (SWinf) {};
        \node[dot] at (4,-1) (SEinf) {};
        \draw[strand]
            (SWinf) edge[out=45, in=-45] (NWinf)
            (SEinf) edge[out=135, in=-135] (NEinf);
        \node[node, minimum size=.2cm] (cross1) at (6,0) {};
        \node[node, anchor=base] at (6,-1.6) {$1$};
        \node[dot] at (7,1) (NE1) {};
        \node[dot] at (5,1) (NW1) {};
        \node[dot] at (5,-1) (SW1) {};
        \node[dot] at (7,-1) (SE1) {};
        \draw[strand]
            (SW1) edge[out=45, in=-135] (NE1)
            (SE1) edge[out=135, in=-45] (cross1)
            (NW1) edge[out=-45, in=135] (cross1);
        \node[node, minimum size=.2cm] (cross2) at (9,0) {};
        \node[node, anchor=base] at (9,-1.6) {$\overline{1}$};
        \node[dot] at (10,1) (NE2) {};
        \node[dot] at (8,1) (NW2) {};
        \node[dot] at (8,-1) (SW2) {};
        \node[dot] at (10,-1) (SE2) {};
        \draw[strand]
            (SW2) edge[out=45, in=-135] (cross2)
            (cross2) edge[out=45, in=-135] (NE2)
            (NW2) edge[out=-45, in=135] (SE2);
        \node[node] (c) at (-1.7,-2.7) {c)};
        \coordinate (LT1) at (-1,-2.75);
        \coordinate (LB1) at (-1,-5.25);
        \coordinate (RT1) at (4,-2.75);
        \coordinate (RB1) at (4,-5.25);
        \node[box] at (0.25,-4) (R1) {R};
        \node[box] at (2.75,-4) (F1) {F};
        \draw[strand]
            (R1) edge[out=45, in=135] (F1)
            (R1) edge[out=-45, in=-135] (F1)
            (LT1) edge[out=-45, in=135] (R1)
            (LB1) edge[out=45, in=-135] (R1)
            (F1) edge[out=45, in=-135] (RT1)
            (F1) edge[out=-45, in=135] (RB1);
        \node[node, anchor=base] at (1.5,-5.8) {$R\!+\!F$};
        \coordinate (LT2) at (5,-2.75);
        \coordinate (LB2) at (5,-5.25);
        \coordinate (RT2) at (10,-2.75);
        \coordinate (RB2) at (10,-5.25);
        \node[box] at (6.25,-4) (R2) {\rotatebox{90}{\reflectbox{R}}};
        \node[box] at (8.75,-4) (F2) {F};
        \draw[strand]
            (R2) edge[out=45, in=135] (F2)
            (R2) edge[out=-45, in=-135] (F2)
            (LT2) edge[out=-45, in=135] (R2)
            (LB2) edge[out=45, in=-135] (R2)
            (F2) edge[out=45, in=-135] (RT2)
            (F2) edge[out=-45, in=135] (RB2);
        \node[node, anchor=base] at (7.5,-5.8) {$RF$};
    \end{tikzpicture}
    \caption{a) A Conway's sphere, in which the tangle is embedded. b) Basic tangles which satisfy the boundary. c) Sum and product of tangles R and F.}
    \label{fig:sphere}
\end{figure}
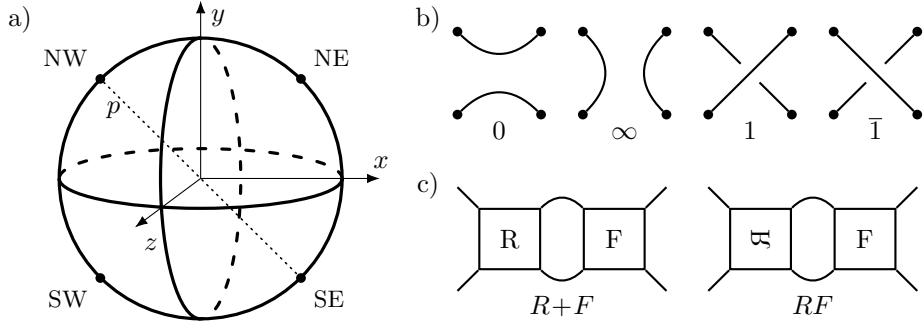

\begin{definition}[Tangle]\label{def:tangle}
    A \emph{tangle} is an embedding of two arcs (homeomorphic to the interval $[0,1]$) and circles into the unit 3-ball $B^3 = \{(x,y,z) \in \mathbb{R}^3 \mid x^2+y^2+z^2 \leq 1\}$, where only the endpoints of the two arcs lie on the boundary of the ball, $\partial B^3$, and are located at the four points
$$\textrm{NW} = \left(\tfrac{-1}{\sqrt{2}}, \tfrac{1}{\sqrt{2}}, 0\right),\;
\textrm{NE} = \left(\tfrac{1}{\sqrt{2}}, \tfrac{1}{\sqrt{2}}, 0\right),\; 
\textrm{SW} = \left(\tfrac{-1}{\sqrt{2}}, \tfrac{-1}{\sqrt{2}}, 0\right), \textrm{ and}\; 
\textrm{SE} = \left(\tfrac{1}{\sqrt{2}}, \tfrac{-1}{\sqrt{2}}, 0\right).$$
\end{definition}
The sphere $\partial B^3$, which the tangle intersects at these four points, is referred to as the \emph{Conway sphere}.
\noindent Furthermore, we define the \emph{principal diagonal} axis as the line $p = \{(x,-x,0) \in \mathbb{R}^3  \mid x \in \mathbb{R}\}$, which connects the NW-SE boundary points, see \subfigref{fig:sphere}{a}.

\begin{definition}[VHX]\label{def:vhx}
{We classify tangles based on how the open arcs connect the boundary points, grouping them into three distinct classes:}
    \begin{itemize}
    \item V-type tangle: open arcs connect points vertically (NW-SW and NE-SE points),
    \item H-type tangle: open arcs connect points horizontally (NW-NE and SW-SE points),
    \item X-type tangle: if open arcs connect points diagonally (NW-SE and SW-NE points).
    \end{itemize}
\end{definition}

\begin{definition}[Basic tangle]\label{def:tangles_basic}
    The tangles $0$, $\infty$, $1$, $\overline{1}$ $\in \basic$ depicted in \subfigref{fig:sphere}{b} are called \emph{basic tangles}.
\end{definition}
\noindent Basic tangles are building blocks for constructing more complex tangles, which can be formed by joining simpler ones through the identification of arc boundaries.

\begin{definition}[Sum and product of tangles]\label{def:operations}
The \emph{sum} $A\!+\!B$ and \emph{product} $AB$ of two tangles $A$ and $B$ are the operations depicted on \subfigref{fig:sphere}{c}.
\end{definition}

\begin{definition}[Algebraic tangle]\label{def:tangles_algebraic}
    An \emph{algebraic tangle} is a tangle $A \in \algebraic$ which can be represented as any composition of sums abd products of basic tangles. 
\end{definition}
Since every algebraic tangle can be expressed as a product of two subtangles $A=LR$, we can represent algebraic tangles as binary expression trees:
\begin{figure}[h!]
    \centering
    \begin{tikzpicture}
        \node at (0,0) (one) {$LR=$};
        \node[tree_mid={mid}, row sep={1cm}, column sep={1cm}, row sep={.45cm}] at (.65,0) (tree1)
        {
            \node (T1) {}; & \node (R1) {};\\
            \node[phantom] (mid) {}; \\
            \node (L1) {};\\
        };
        \draw (T1) edge (R1)
              (T1) edge (L1);
        \node[labelright={R1}] {$R$};
        \node[labelright={L1}] {$L$};
        \node at (3.4,0) (one) {$\quad\qquad \text{or}\qquad A=LR=$};
        \node[tree_mid={mid}, row sep={1cm}, column sep={1cm}, row sep={.45cm}] at (5.45,0) (tree2)
        {
            \node (T1) {}; & \node (R1) {};\\
            \node[phantom] (mid) {}; \\
            \node (L1) {};\\
        };
        \draw (T1) edge (R1)
              (T1) edge (L1);
        \node[labelleft={T1}] {$A$};
        \node[labelright={R1}] {$R$};
        \node[labelright={L1}] {$L$};
    \end{tikzpicture}
\end{figure}

{
As a tangle can be represented in many different ways, our first goal is to determine whether two algebraic tangles are the same or different. We define “sameness” in two distinct ways: as isotopic or equivalent, as outlined below.}
\begin{definition}[Isotopy]
    Two tangles $A$ and $B$ are \textit{isotopic} if there exists an orientation-preserving self-homeomorphism 
    $h: (B^3, A) \rightarrow (B^3, B)$ that is the identity map on the boundary ($h|_{\partial B^3} = \mathrm{Id}$). By slight abuse of notation, we will consider two isotopic tangles equal and will use the notation $A=B$.   
\end{definition}
\begin{definition}[Equivalence]
    Two tangles $A$ and $B$ are \textit{equivalent}, denoted by $A \sim B$, if there exists an self-homeomorphism 
    $h: (B^3, A) \rightarrow (B^3, B)$. 
\end{definition} 
\noindent Note that equivalence is weaker than isotopy, since equivalence does not require the four endpoints NW, NE, SE, and SW to remain fixed.

\begin{figure}[h!]
    \centering
    \begin{tabular}{c@{\quad\quad\quad}c@{\quad\quad\quad}c}  
        \includegraphics[scale=1.75, page=5]{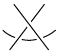} & 
        \includegraphics[scale=1.75, page=3]{reid.pdf} & 
        \includegraphics[scale=1.75, page=1]{reid.pdf} \\
        \tikz{\draw[{Latex}-{Latex}, very thin] (0,1) -- (0,0);} & 
        \tikz{\draw[{Latex}-{Latex}, very thin] (0,1) -- (0,0);} & 
        \tikz{\draw[{Latex}-{Latex}, very thin] (0,1) -- (0,0);} \\
        \includegraphics[scale=1.75, page=6]{reid.pdf} & 
        \includegraphics[scale=1.75, page=4]{reid.pdf} & 
        \includegraphics[scale=1.75, page=2]{reid.pdf} \\
        a) Type I &  b) Type II & c) Type III
    \end{tabular}
    \caption{Reidemeister moves of Type I, II, and III.}
    \label{fig:reidemeister_moves}
\end{figure}

{
\begin{definition}[Tangle diagram] A \textit{diagram} of a tangle is the regular projection of the tangle to the disk 
$D$ obtained by the intersection of $B^3$ and the plane $z=0$. By a general position argument, we will assume that the only singularities are a finite collection of double points, called \emph{crossings}, which hold also over/under information about the projection.
\end{definition}
}
{
As in the case of knots, isotopy is generated by planar isotopy of $\text{Int}\,D$ and the three Reidemeister moves depicted in \autoref{fig:reidemeister_moves}.} 
\begin{theorem}[\cite{conway1970enumeration}]\label{thm:reidemeister}
    Two tangles are isotopic if and only if their diagrams differ by planar isotopy and a finite sequence of Reidemeister moves.   
\end{theorem}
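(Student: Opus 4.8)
The statement is the tangle analogue of the classical Reidemeister theorem, and the plan is to adapt the standard piecewise-linear proof while keeping careful track of the four fixed boundary points NW, NE, SW, SE. I would work throughout in the PL (or, equivalently, smooth) category, so that general-position and subdivision machinery is available.

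The \emph{if} direction is the routine one. Each of the three Reidemeister moves is a modification of the diagram supported inside a small disk in $\mathrm{Int}\,D$; lifting this disk to a small ball in $\mathrm{Int}\,B^3$, the move is realised by an ambient isotopy of $B^3$ that is the identity outside that ball, and in particular the identity on $\partial B^3$. Likewise a planar isotopy of $\mathrm{Int}\,D$ fixing the four marked points extends, by coning through a collar of $\partial B^3$, to an ambient isotopy of $B^3$ rel $\partial B^3$. Composing the finitely many such isotopies shows that diagrams related by planar isotopy and Reidemeister moves represent isotopic tangles.

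For the \emph{only if} direction I would first upgrade the self-homeomorphism $h$ of the definition to an ambient isotopy rel boundary, which is necessary because the definition of isotopy only asserts the \emph{existence} of $h$. Since $h$ is orientation preserving and restricts to the identity on $\partial B^3$, the Alexander trick guarantees that $h$ is isotopic to $\mathrm{Id}_{B^3}$ through homeomorphisms fixing $\partial B^3$; carrying $A$ along this isotopy yields an ambient isotopy $H_t$ with $H_0 = \mathrm{Id}$, $H_1(A) = B$, and $H_t|_{\partial B^3} = \mathrm{Id}$ for all $t$. Using a collar $\partial B^3 \times [0,1)$ in which the tangle consists of four radial arcs, I would arrange that the isotopy is supported in the interior, away from the four endpoints. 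The PL isotopy then decomposes into a finite sequence of elementary triangle moves, each replacing one edge of a polygonal arc by the remaining two sides of a triangle whose interior is disjoint from the rest of the tangle.

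The crux, exactly as in the closed case, is the projection step. Projecting a single triangle move to $D$ need not itself be a Reidemeister move, but after subdividing the triangle into sufficiently small sub-triangles one may assume each projected triangle meets the diagram in a controlled configuration --- an empty triangle, a triangle crossed by one strand, or a triangle containing a single crossing --- and each of these is realised by planar isotopy together with at most one Reidemeister move of type I, II, or III. Carrying out this case analysis, while keeping every triangle inside the interior via the collar so that the diagram near $\partial D$ and at the four marked points is never disturbed, produces the required finite sequence. I expect the main obstacle to be precisely this boundary bookkeeping: ensuring that the elementary moves can always be pushed off a neighbourhood of the Conway sphere, so that the reduction genuinely takes place in a ball containing arcs and thereby inherits the generic-position and triangle-subdivision arguments of the standard proof for knots and links.
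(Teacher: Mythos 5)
The paper does not prove this statement: it is quoted directly from Conway \cite{conway1970enumeration} as a known result, so there is no in-paper argument to compare against. Your outline is the standard proof of Reidemeister's theorem adapted to the relative setting, and it is essentially correct: the \emph{if} direction by realising each move inside a small ball in $\mathrm{Int}\,B^3$; the \emph{only if} direction by using the Alexander trick to promote the boundary-fixing self-homeomorphism $h$ to an ambient isotopy rel $\partial B^3$, decomposing that PL isotopy into $\Delta$-moves supported away from a collar of the Conway sphere, and then running the usual subdivision-and-projection case analysis. The two points you flag as delicate are indeed the right ones to flag: (i) the definition of isotopy in the paper only posits the existence of $h$, so the Alexander-trick upgrade is genuinely needed and you invoke it correctly; (ii) the collar argument is what guarantees the four endpoints and the diagram near $\partial D$ are never disturbed, reducing everything to the interior where the closed-case machinery applies. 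The only cosmetic quibble is that your three-case list for a projected small triangle is slightly abbreviated relative to the full classical case analysis (e.g.\ triangles having a tangle edge as a side), but at the level of a proof sketch this is the expected and correct route.
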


\begin{figure}[t]
    \centering
    \begin{tikzpicture}[scale=.6]
        \path [use as bounding box] (-2.4,1.6) rectangle (16.5,-6.5);
        \node[node] at (-2,1.2) {a)};
        \node[node] at (13,1.2) {b)};
        \coordinate (zero) at (0,0);
        \coordinate (NW) at (-1.25, 1.25);
        \coordinate (NE) at (1.25, 1.25);
        \coordinate (SE) at (1.25, -1.25);
        \coordinate (SW) at (-1.25, -1.25);
        \coordinate (label) at (0,-2);
        \draw[strand]
            (NW) edge[out=-45, in=135] (SE)
            (SW) edge[out=45, in=-135] (NE);
        \node[box] at (zero) {R};
        \node[node, anchor=base] at (label) {$R$};
        \begin{scope}[transform canvas={xshift=2cm}]
            \draw[strand]
                (NW) edge[out=-45, in=135] (SE)
                (SW) edge[out=45, in=-135] (NE);
            \node[box] at (zero) {\rotatebox{90}{R}};
            \node[node, anchor=base] at (label) {$\nu R$};  
        \end{scope}
        \begin{scope}[transform canvas={xshift=4cm}]
            \draw[strand]
                (NW) edge[out=-45, in=135] (SE)
                (SW) edge[out=45, in=-135] (NE);
            \node[box] at (zero) {\rotatebox{180}{R}};
            \node[node, anchor=base] at (label) {$\nu^2 \!R$};  
        \end{scope}
        \begin{scope}[transform canvas={xshift=6cm}]
            \draw[strand]
                (NW) edge[out=-45, in=135] (SE)
                (SW) edge[out=45, in=-135] (NE);
            \node[box] at (zero) {\rotatebox{-90}{R}};
            \node[node, anchor=base] at (label) {$\nu^3 \!R$};  
        \end{scope}
        \begin{scope}[transform canvas={xshift=9cm}]
            \draw[strand]
                (NW) edge[out=-45, in=135] (SE)
                (SW) edge[out=45, in=-135] (NE);
            \node[box] at (zero) {\reflectbox{\rotatebox{-90}{R}}};
            \node[node, anchor=base] at (label) {$\eta R$};  
        \end{scope}
        \begin{scope}[transform canvas={yshift=-2.5cm}]
            \draw[strand]
                (NW) edge[out=-45, in=135] (SE)
                (SW) edge[out=45, in=-135] (NE);
            \node[mirrorbox] at (zero) {\reflectbox{\textbf{R}}};
            \node[node, anchor=base] at (label) {$\mu\eta\nu R$};  
        \end{scope}
        \begin{scope}[transform canvas={xshift=2cm, yshift=-2.5cm}]
            \draw[strand]
                (NW) edge[out=-45, in=135] (SE)
                (SW) edge[out=45, in=-135] (NE);
            \node[mirrorbox] at (zero) {\reflectbox{\rotatebox{90}{\textbf{R}}}};
            \node[node, anchor=base] at (label) {$\mu\eta\nu^2 \!R$};  
        \end{scope}
        \begin{scope}[transform canvas={xshift=4cm, yshift=-2.5cm}]
            \draw[strand]
                (NW) edge[out=-45, in=135] (SE)
                (SW) edge[out=45, in=-135] (NE);
            \node[mirrorbox] at (zero) {\reflectbox{\rotatebox{180}{\textbf{R}}}};
            \node[node, anchor=base] at (label) {$\mu\nu\eta R$};  
        \end{scope}
        \begin{scope}[transform canvas={xshift=6cm, yshift=-2.5cm}]
            \draw[strand]
                (NW) edge[out=-45, in=135] (SE)
                (SW) edge[out=45, in=-135] (NE);
            \node[mirrorbox] at (zero) {\reflectbox{\rotatebox{-90}{\textbf{R}}}};
            \node[node, anchor=base] at (label) {$\mu\eta R$};
        \end{scope}
        \begin{scope}[transform canvas={xshift=9cm, yshift=-2.5cm}]
            \draw[strand]
                (NW) edge[out=-45, in=135] (SE)
                (SW) edge[out=45, in=-135] (NE);
            \node[mirrorbox] at (zero) {\textbf{R}};
            \node[node, anchor=base] at (label) {$\mu R$};  
        \end{scope}
    \end{tikzpicture}
        \caption{ Tangle $R$ transformed by rotations and reflections represented by a two-faced square with letter \qq{R}.
        a) Set of all tangles equivalent to tangle $R$ ordered by rotations: $\nu$ (horizontally), $\rho_y$ (vertically).
        b) Reflections of tangle $R$: $\eta R \equiv R0$, and $\mu R \equiv \overline{R}$. }
    \label{fig:equivalent}
\end{figure}
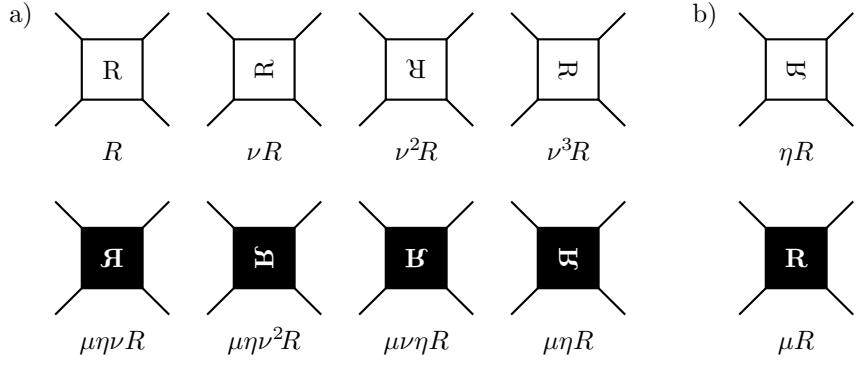

Naturally, we can generate new tangles by reflecting or rotating existing ones (see \autoref{fig:equivalent}). We will study the symmetry group of each tangle by examining its invariance under the following operations.
\begin{definition}[Rotations and reflections]\label{def:transformations}
The transformation $\mu$ is the reflection through the $xy$ plane, $\eta$ is the reflection through the $zp$ plane, $\rho_i$ are rotations around the axis $i \in \{x,y,z\}$ by angle $\pi$, and $\nu$ is the counterclockwise rotation around the $z$ axis by angle $\pi/2$. Explicitly:
\begin{align*}
    \mathmakebox[5cm][l]{\mu: \left(x,y,z\right) \rightarrow \left(x,y,-z\right),}
    \mathmakebox[5cm][l]{\rho_x = \mu\nu\eta: \left(x,y,z\right) \rightarrow \left(x,-y,-z\right),}\\
    \mathmakebox[5cm][l]{\eta: \left(x,y,z\right) \rightarrow \left(-y,-x,z\right),}
    \mathmakebox[5cm][l]{\rho_y = \mu\eta\nu: \left(x,y,z\right) \rightarrow \left(-x,y,-z\right),}\\
    \mathmakebox[5cm][l]{\nu: \left(x,y,z\right) \rightarrow \left(-y,x,z\right),} 
    \mathmakebox[5cm][l]{\rho_z =\nu^2\hspace{.7 em}: \left(x,y,z\right) \rightarrow \left(-x,-y,z\right).}
\end{align*}
\end{definition}

\section{Notation and isotopy preserving moves}\label{sec:isotopic_moves}

We use the following special notations for the reflection operators $\mu$ and $\eta$:
\begin{align}
    \mathmakebox[5cm][l]{\mu(A) \equiv \overline{A} \equiv -A,}
    \mathmakebox[5cm][l]{\eta(A) \equiv A0.} \label{eq:refl}
\end{align}
\noindent It is easy to check graphically that rotation $\rho_x$ is additive and rotation $\rho_y$ is antiadditive in the following sense:
\begin{align}\label{eq:rotations_sum}
    \mathmakebox[5cm][l]{[A\!+\!B]_x = A_x\!+\!B_x,}
    \mathmakebox[5cm][l]{[A\!+\!B]_y = B_y\!+\!A_y.}
\end{align}
It is also easy to check graphically that $\rho_x\eta = \eta\rho_y$, from where we obtain the following rules for multiplication:
\begin{align}\label{eq:rotations_multiplication}
    \mathmakebox[5cm][l]{\left[A0\right]_x = A_y0,}
    \mathmakebox[5cm][l]{\left[A0\right]_y = A_x0,} \\
    \mathmakebox[5cm][l]{\left[AB\right]_x = A_yB_x,}
    \mathmakebox[5cm][l]{\left[AB\right]_y = (B_y0)(A_x0).}
\end{align}

 Any tangle can be represented using sum and product operations (\autoref{def:operations}), and these operations can be used interchangeably, each with its own strengths and weaknesses.
These operations are related through the following equalities:
\begin{equation}\label{eq:addition_multiplication}
    \mathmakebox[5cm][r]{A \!+\! B = A0B,} 
    \quad \phantom{\iff} \quad
    \mathmakebox[5cm][l]{AB = A0 \!+\! B .}
\end{equation}

Addition can be fully expressed by multiplication, but the contrary is not true. 
The fact that 0 is the neutral tangle under addition translates in multiplicative form as follows:
\begin{equation}\label{eq:bracket_juggling}
    \mathmakebox[5cm][r]{0\!+\!A = A = A\!+\!0}
    \quad \iff \quad 
    \mathmakebox[5cm][l]{00A = A = A00} 
\end{equation}
Associativity of addition translates to the following \qq{bracket juggling} multiplication rule:
\begin{equation}\label{eq:associativity}
    \mathmakebox[5cm][r]{(A\!+\!B)\!+\!C = A\!+\!(B\!+\!C)}
    \quad \iff \quad  
    \mathmakebox[5cm][l]{(A0B)0C = A0(B0C),} 
\end{equation}
and more general rule from the multiplication representation perspective:
\begin{equation}
    \mathmakebox[5cm][r]{(A0\!+\!B)\!+\!C = A0\!+\!(B\!+\!C)}
    \quad \iff \quad  
    \mathmakebox[5cm][l]{AB0C = A(B0C),} 
\end{equation}
which are exceptions to the left-associativity of tangle multiplication:
$ABC \coloneqq((AB)C).$
\begin{definition}[Elementary moves]\label{def:elementary_moves}
    Elementary moves I and II (\autoref{fig:twist_flype}) are two tangles moves which are directly translated from I and II Reidemeister moves (\autoref{thm:reidemeister}). Algebraically, we represent them as follows (note that $00=\infty$):
    \begin{align*}
        \mathmakebox[5cm][r]{\infty\!+\!1 \reidone \infty}
        \quad \iff \quad 
        \mathmakebox[5cm][l]{01 \reidone 00} \\
        \mathmakebox[5cm][r]{1+\overline{1} \reidtwo 0 \reidtwo 1\!+\!\overline{1},}
        \quad\iff\quad 
        \mathmakebox[5cm][l]{10\overline{1} \reidtwo 0 \reidtwo \overline{1}01.} 
    \end{align*}
\end{definition}

If we have a sum of $1$ and $\overline{1}$ tangles, we can remove any pairs of neighboring 1 and $\overline{1}$ tangles due to the IInd Reidemeister move and transform it to a tangle consisting only of 1's or $\overline{1}$'s. We call such tangles integral tangles.
\begin{definition}[Integral tangle]\label{def:tangles_integral}
    An \emph{integral tangle} $n \in \integral$ represented by an integer is a tangle created by sum of $1$ and $-1$ tangles as follows: 
    \begin{equation*}
        \mathmakebox[5.8cm][r]{n = \sum_{i=1}^n 1 = \underbrace{1\!+1\!+\cdots\!+\!1}_n \stackrel{\eqref{eq:associativity}}{=} 1010\cdots\!01;} \qquad
        \mathmakebox[5.8cm][l]{-n = \sum_{i=1}^n \overline{1} = \underbrace{\overline{1}\!+\overline{1}\!+\cdots\!+\!\overline{1}}_n \stackrel{\eqref{eq:associativity}}{=} \overline{1010\cdots01}}
    \end{equation*}
\end{definition}

Next, we define three higher-order moves, namely the \emph{twist}, the \emph{flype}, and the \emph{ring move} (\autoref{fig:twist_flype}), which can be expressed as sequences of Reidemeister II and III moves. As we will see, these moves have convenient algebraic and geometric interpretations.
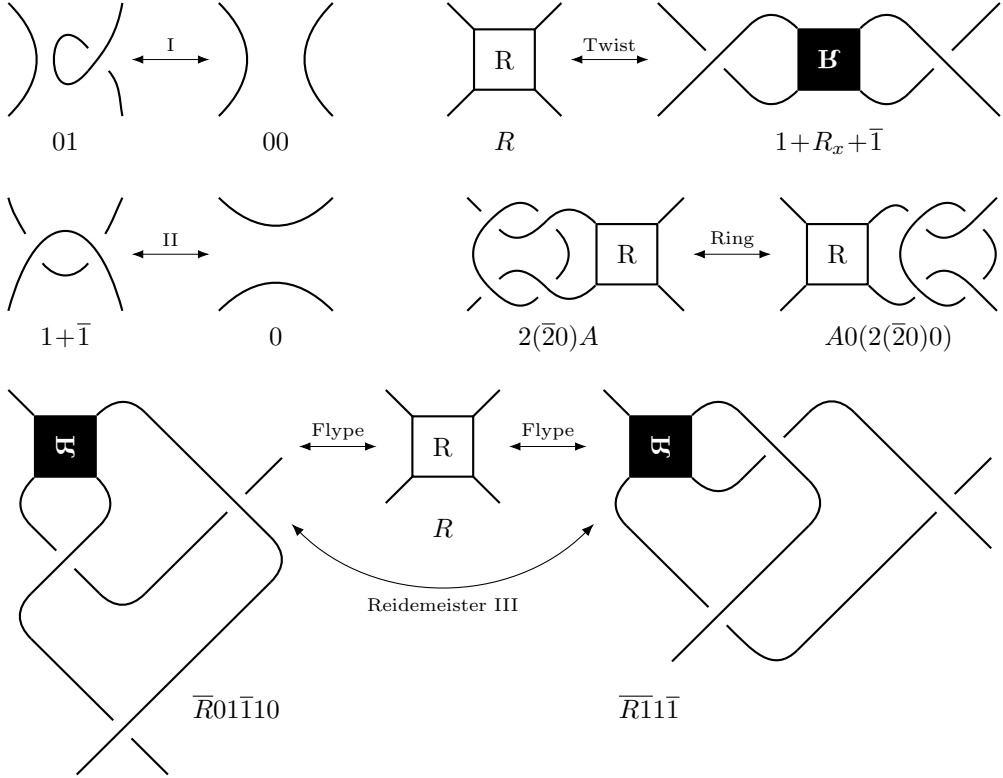
\begin{figure}
    \centering
    \begin{tikzpicture}[scale=.15, baseline=(zero)]
        \coordinate (zero) at (0,0);
        \coordinate (label) at (0,-8);
        \coordinate (A1) at (-5, 5);
        \coordinate (A2) at (-5, -5);
        \coordinate (B1) at (5, 5);
        \coordinate (B2) at (2, -1);
        \coordinate (B3) at (2, 1);
        \coordinate (B4) at (3.8,-1);
        \coordinate (B5) at (5, -5);
        \draw[strand]
            (A1) edge[out=-45, in=45] (A2);;
        \draw[strand, looseness=1]
            (B1) edge[out=-100, in=50] (B2)
            (B4) edge[out=-50, in=100] (B5)
            ;
        \draw[strand, looseness=8]
            (B2) edge[out=230, in=-230] (B3);
        \node[node, anchor=base] at (label) {$01$};
    \end{tikzpicture}
    \begin{tikzpicture}[baseline=(zero)]
        \coordinate (zero) at (0,0);
        \node at (0,.2) {\footnotesize I};
        \draw[{Latex}-{Latex}, very thin] (.5,0) -- (-.5,0); 
    \end{tikzpicture}
    \begin{tikzpicture}[scale=.15, baseline=(zero)]
        \coordinate (zero) at (0,0);
        \coordinate (label) at (0,-8);    
        \coordinate (NW) at (-5, 5);
        \coordinate (NE) at (5, 5);
        \coordinate (SE) at (5, -5);
        \coordinate (SW) at (-5, -5);
        \draw[strand]
            (SW) edge[out=45, in=-45] (NW)
            (SE) edge[out=135, in=-135] (NE);
        \node[node, anchor=base] at (label) {$00$};
        \node[node] at (0,-10) {};
    \end{tikzpicture}
    \hfill
    \begin{tikzpicture}[scale=.15, baseline=(zero)]
        \coordinate (zero) at (0,0);
        \coordinate (NW) at (-5, 5);
        \coordinate (NE) at (5, 5);
        \coordinate (SE) at (5, -5);
        \coordinate (SW) at (-5, -5);
        \coordinate (label) at (0,-8);
        \draw[strand]
            (NW) edge[out=-45, in=135] (SE)
            (SW) edge[out=45, in=-135] (NE);
        \node[box] at (zero) {R};
        \node[node, anchor=base] at (label) {$R$};
    \end{tikzpicture}
    \begin{tikzpicture}[baseline=(zero)]
        \coordinate (zero) at (0,0);
        \node at (0,.2) {\footnotesize Twist};
        \draw[{Latex}-{Latex}, very thin] (.5,0) -- (-.5,0); 
    \end{tikzpicture}
    \begin{tikzpicture}[scale=.15, baseline=(zero)]
        \coordinate (zero) at (0,0);
        \coordinate (LTend) at (-15,5);
        \coordinate (LBend) at (-15,-5);
        \coordinate (A) at (-7,3);
        \coordinate (c1) at (-10.8,.8);
        \coordinate (c2) at (-9.2,-.8);
        \coordinate (B) at (-7,-3);
        \coordinate (C) at (7,-3);
        \coordinate (c3) at (9.2,-.8);
        \coordinate (c4) at (10.8,.8);
        \coordinate (D) at (7,3);
        \coordinate (RTend) at (15,5);
        \coordinate (RBend) at (15,-5);
        \node[mirrorbox] at (zero) (R) {\reflectbox{\rotatebox{180}{\textbf{R}}}};
        \draw[strand]
            (LTend) edge[out=-45, in=135] (c1)
            (c2) edge[out=-45, in=135] (B)
            (LBend) edge[out=45, in=-135] (A)
            (A) edge[out=45, in=135] (R)
            (B) edge[out=-45, in=-135] (R)
            (R) edge[out=45, in=135] (D)
            (R) edge[out=-45, in=-135] (C)
            (C) edge[out=45, in=-135] (c3)
            (c4) edge[out=45, in=-135] (RTend)
            (D) edge[out=-45, in=135] (RBend);
        \node[node, anchor=base] at (0,-8) {$1\!+\!R_x\!+\!\overline{1}$};
    \end{tikzpicture}

    \begin{tikzpicture}[scale=.15, baseline=(zero)]
        \coordinate (zero) at (0,0);
        \coordinate (label) at (0,-8);    
        \coordinate (NW) at (-5, 5);
        \coordinate (NE) at (5, 5);
        \coordinate (SE) at (5, -5);
        \coordinate (SW) at (-5, -5);
        \coordinate (c1) at (-3.5, 1.8);
        \coordinate (c2) at (-2, -.8);
        \coordinate (c3) at (2, -.8);
        \coordinate (c4) at (3.5, 1.8);
        \draw[strand, looseness=2.5]
            (SW) edge[out=75, in=105] (SE);
        \draw[strand, looseness=1]
            (NW) edge[out=-75, in=120] (c1)
            (c4) edge[out=60, in=-115] (NE);
        \draw[strand]
            (c2) edge[out=-50, in=-130] (c3);
        \node[node, anchor=base] at (label) {$1\!+\!\overline{1}$};
        \node[node] at (0,-10) {};
    \end{tikzpicture}
    \begin{tikzpicture}[baseline=(zero)]
        \coordinate (zero) at (0,0);
        \node at (0,.2) {\footnotesize II};
        \draw[{Latex}-{Latex}, very thin] (.5,0) -- (-.5,0); 
    \end{tikzpicture}
    \begin{tikzpicture}[scale=.15, baseline=(zero)]
        \coordinate (zero) at (0,0);
        \coordinate (label) at (0,-8);    
        \coordinate (NW) at (-5, 5);
        \coordinate (NE) at (5, 5);
        \coordinate (SE) at (5, -5);
        \coordinate (SW) at (-5, -5);
        \draw[strand]
            (SW) edge[out=45, in=135] (SE)
            (NW) edge[out=-45, in=-135] (NE);
        \node[node, anchor=base] at (label) {$0$};
    \end{tikzpicture}
    \hfill
    \begin{tikzpicture}[scale=.15, baseline=(zero)]
        \coordinate (zero) at (0,0);
        \coordinate (A1) at (-7,3);
        \coordinate (A2) at (-11.2,2.2);
        \coordinate (A3) at (-12.8,3.8);
        \coordinate (A4) at (-14,5);
        \coordinate (B1) at (-7,-3);
        \coordinate (B2) at (-11.2,-2.2);
        \coordinate (B3) at (-12.8,-3.8);
        \coordinate (B4) at (-14,-5);
        \coordinate (C1) at (-7.8,3.8);
        \coordinate (C2) at (-12,3);
        \coordinate (C3) at (-12,-3);
        \coordinate (C4) at (-7.8,-3.8);
        \coordinate (C5) at (-6.2,-2.2);
        \coordinate (C6) at (-6.2,2.2);
        \coordinate (D) at (5,5);
        \coordinate (E) at (5,-5);
        \node[box] at (zero) (R) {R};
        \draw[strand]
            (R) edge[out=135, in=45] (A1)
            (A1) edge[out=-135, in=-45] (A2)
            (A3) edge[out=135, in=-45] (A4)
            (R) edge[out=-135, in=-45] (B1)
            (B1) edge[out=135, in=45] (B2)
            (B3) edge[out=-135, in=45] (B4)
            (C1) edge[out=135, in=45] (C2)
            (C2) edge[out=-135, in=135] (C3)
            (C3) edge[out=-45, in=-135] (C4)
            (C5) edge[out=45, in=-45] (C6)
            (R) edge[out=45, in=-135] (D)
            (R) edge[out=-45, in=135] (E);
        \node[node, anchor=base] at (-6,-8) {$2(\overline{2}0)A$};
    \end{tikzpicture}
    \begin{tikzpicture}[baseline=(zero)]
        \coordinate (zero) at (0,0);
        \node at (0,.2) {\footnotesize Ring};
        \draw[{Latex}-{Latex}, very thin] (.5,0) -- (-.5,0); 
    \end{tikzpicture}
    \begin{tikzpicture}[scale=.15, baseline=(zero)]
        \coordinate (zero) at (0,0);
        \coordinate (A0) at (14,5);
        \coordinate (A1) at (12,3);
        \coordinate (A2) at (7.8,2.2);
        \coordinate (A3) at (6.2,3.8);
        \coordinate (B0) at (14,-5);
        \coordinate (B1) at (12,-3);
        \coordinate (B2) at (7.8,-2.2);
        \coordinate (B3) at (6.8,-3.8);
        \coordinate (C1) at (11.2,3.8);
        \coordinate (C2) at (7,3);
        \coordinate (C3) at (7,-3);
        \coordinate (C4) at (11.2,-3.8);
        \coordinate (C5) at (12.8,-2.2);
        \coordinate (C6) at (12.8,2.2);
        \coordinate (D) at (-5,5);
        \coordinate (E) at (-5,-5);
        \node[box] at (zero) (R) {R};
        \draw[strand]
            (A0) edge[out=-135, in=45] (A1)
            (A1) edge[out=-135, in=-45] (A2)
            (A3) edge[out=135, in=45] (R)
            (B0) edge[out=135, in=-45] (B1)
            (B1) edge[out=135, in=45] (B2)
            (B3) edge[out=-135, in=-45] (R)
            (C1) edge[out=135, in=45] (C2)
            (C2) edge[out=-135, in=135] (C3)
            (C3) edge[out=-45, in=-135] (C4)
            (C5) edge[out=45, in=-45] (C6)
            (R) edge[out=135, in=-45] (D)
            (R) edge[out=-135, in=45] (E);
        \node[node, anchor=base] at (4.5,-8) {$A0(2(\overline{2}0)0)$};
    \end{tikzpicture}

    \begin{tikzpicture}[scale=.15, baseline=(zero), remember picture]
        \coordinate (zero) at (0,0);
        \coordinate (cross1) at (10,0);
        \coordinate (cross2) at (5,-15);
        \coordinate (cross3) at (25,-5);
        \coordinate (LTend) at (-5,5);
        \coordinate (A1) at (3,-7);
        \coordinate (A2) at (-3,-13);
        \coordinate (A3) at (-3,-17);
        \coordinate (A4) at (4.2,-24.2);
        \coordinate (A5) at (5.8,-25.8);
        \coordinate (A6) at (9,-29);
        \coordinate (B1) at (7,3);
        \coordinate (B2) at (18,-8);
        \coordinate (B3) at (18,-12);
        \coordinate (B4) at (1,-29);
        \coordinate (C1) at (-3,-7);
        \coordinate (C2) at (-.8,-9.2);
        \coordinate (C3) at (.8,-10.8);
        \coordinate (C4) at (3,-13);
        \coordinate (C5) at (7,-13);
        \coordinate (C6) at (14.2,-5.8);
        \coordinate (C7) at (15.8,-4.2);
        \coordinate (C8) at (19,-1);
        \node[mirrorbox] at (zero) (R) {\reflectbox{\rotatebox{-90}{\textbf{R}}}};
        \draw[strand]
            (LTend) edge[out=-45, in=135] (R)
            (R) edge[out=45, in=135] (B1)
            (B1) edge[out=-45, in=135] (B2)
            (B2) edge[out=-45, in=45] (B3)
            (B3) edge[out=-135, in=45] (B4)
            (R) edge[out=-45, in=45] (A1)
            (A1) edge[out=-135, in=45] (A2)
            (A2) edge[out=-135, in=135] (A3)
            (A3) edge[out=-45, in=135] (A4)
            (A5) edge[out=-45, in=135] (A6)
            (R) edge[out=-135, in=135] (C1)
            (C1) edge[out=-45, in=135] (C2)
            (C3) edge[out=-45, in=135] (C4)
            (C4) edge[out=-45, in=-135] (C5)
            (C5) edge[out=45, in=-135] (C6)
            (C7) edge[out=45, in=-135] (C8);
        \node[node, anchor=base] at (15,-24) {$\overline{R}01\overline{1}10$};
        \node at (19,-6) (anchor1) {};
    \end{tikzpicture}
    \begin{tikzpicture}[baseline=(zero)]
        \coordinate (zero) at (0,0);
        \node at (0,.2) {\footnotesize Flype};
        \draw[{Latex}-{Latex}, very thin] (.5,0) -- (-.5,0); 
    \end{tikzpicture}
    \begin{tikzpicture}[scale=.15, baseline=(zero), remember picture]
        \coordinate (zero) at (0,0);
        \coordinate (NW) at (-5, 5);
        \coordinate (NE) at (5, 5);
        \coordinate (SE) at (5, -5);
        \coordinate (SW) at (-5, -5);
        \coordinate (label) at (0,-8);
        \draw[strand]
            (NW) edge[out=-45, in=135] (SE)
            (SW) edge[out=45, in=-135] (NE);
        \node[box] at (zero) {R};
        \node[node, anchor=base] at (label) {$R$};
        \node[node, anchor=base] at (0,-14) (anchorlabel) {};
    \end{tikzpicture}
    \begin{tikzpicture}[baseline=(zero)]
        \coordinate (zero) at (0,0);
        \node at (0,.2) {\footnotesize Flype};
        \draw[{Latex}-{Latex}, very thin] (.5,0) -- (-.5,0); 
    \end{tikzpicture}
    \begin{tikzpicture}[scale=.15, baseline=(zero), remember picture]
        \coordinate (zero) at (0,0);
        \coordinate (cross1) at (10,0);
        \coordinate (cross2) at (5,-15);
        \coordinate (cross3) at (25,-5);
        \coordinate (LTend) at (-5,5);
        \coordinate (A1) at (7,-3);
        \coordinate (A2) at (9.2,-.8);
        \coordinate (A3) at (10.8,.8);
        \coordinate (A4) at (13,3);
        \coordinate (A5) at (17,3);
        \coordinate (A6) at (29,-9);
        \coordinate (B1) at (7,3);
        \coordinate (B2) at (13,-3);
        \coordinate (B3) at (13,-7);
        \coordinate (B4) at (1,-19);
        \coordinate (C1) at (-3,-7);
        \coordinate (C2) at (4.2,-14.2);
        \coordinate (C3) at (5.8,-15.8);
        \coordinate (C4) at (12,-18);
        \coordinate (C5) at (24.2,-5.8);
        \coordinate (C6) at (25.8,-4.2);
        \coordinate (C7) at (29,-1);
        \node[mirrorbox] at (zero) (R) {\reflectbox{\rotatebox{-90}{\textbf{R}}}};
        \draw[strand]
            (LTend) edge[out=-45, in=135] (R)
            (R) edge[out=45, in=135] (B1)
            (B1) edge[out=-45, in=135] (B2)
            (B2) edge[out=-45, in=45] (B3)
            (B3) edge[out=-135, in=45] (B4)
            (R) edge[out=-45, in=-135] (A1)
            (A1) edge[out=45, in=-135] (A2)
            (A3) edge[out=45, in=-135] (A4)
            (A4) edge[out=45, in=135] (A5)
            (A5) edge[out=-45, in=135] (A6)
            (R) edge[out=-135, in=135] (C1)
            (C1) edge[out=-45, in=135] (C2)
            (C3) edge[out=-45, in=-135] (C4)
            (C4) edge[out=45, in=-135] (C5)
            (C6) edge[out=45, in=-135] (C7)
            ;
        \node[node, anchor=base] at (-1,-24) {$\overline{R1}1\overline{1}$};
        \node at (-5,-6) (anchor2) {};
    \end{tikzpicture}
    \begin{tikzpicture}[remember picture, overlay]
        \draw[{Latex}-{Latex}] (anchor1) to [out=-45,in=-135] (anchor2);
        \node at (anchorlabel) {\footnotesize Reidemeister III};
    \end{tikzpicture}

    \caption{Tangle moves: elementary moves I and II (special cases of Reidemeister I and II moves), twist, ring and flype moves. Two presented flypes are related by Reidemeister III move.}
    \label{fig:twist_flype}
\end{figure}

\begin{definition}[Twist]\label{def:twist_move}
    The \emph{twist move} is a isotopy move obtained by rotating a subtangle around its $x$-axis by $\pi$. Algebraically, the twist can be formulated as:
    \begin{equation*}
        \mathmakebox[5cm][r]{1+A_x\!+\!\overline{1} \twist A \twist \overline{1}\!+\!A_x\!+\!1}
        \quad \iff \quad
        \mathmakebox[5cm][l]{10A_x0\overline{1} \twist  A \twist \overline{1}0A_x01.}
    \end{equation*}
\end{definition}

\noindent In practice, we will use twist move to move integral tangles from one side of the tangle to the other:
\begin{equation*}
    \mathmakebox[5cm][r]{n\!+\!A \twist A_{x^n}\!+\!n}
    \quad \iff \quad
    \mathmakebox[5cm][l]{n0A \twist A_{x^n}0n,}
\end{equation*}
where $R_{x^n}$ denotes the tangle $R$ to be rotated around the $x$-axis $n$ times, i.e. $R_{x^n} = R_x$ if $n$ is odd and $R_{x^n} = R$ if $n$ is even.

\begin{definition}[Flype]\label{def:flype_move}
    A \emph{flype move} is a tangle isotopy move obtained by the rotation of a subtangle around its principal diagonal $p$ by $\pi$. There are 4 formulas for flypes, two for each direction of rotations (right- and left-handed). In both of them, the SE arc can arrange itself in two different ways related by Reidemeister III move: 
    $$ A \flype \overline{A1}1\overline{1} \reidthree \overline{A}01\overline{1}10 
    \flype A \flype 
    \overline{A01}1\overline{1}0 \reidthree \overline{A}1\overline{1}1 = A.$$
\end{definition}
\noindent Now, using flype, we can show that $1=10$ (with a consequence that $1n=n\!+\!1$):
\begin{align}\label{eq:ambiguity_1}
    1 \flype \overline{1}01\overline{1}10 \reidtwo 0\overline{1}10 \reidone 0010 = 10
\end{align}

\begin{definition}[Ring move]\label{def:ring_move} 
    The \emph{ring move} (\autoref{fig:twist_flype}) is a tangle isotopy move obtained by pushing the ring over a neighboring subtangle: 
    \begin{equation*}
        {(20+\overline{2}0)0+A \ring A+(20\!+\!\overline{2}0)0}
        \quad \iff \quad
        {2(\overline{2}0)A  \ring A0(2(\overline{2}0)0)}.
    \end{equation*}
\end{definition}
\noindent For the tangles $2(20)\overline{1}A$ (and $\overline{2(20)}1A$) the a possibility of performing the ring move is hidden, but combination of flypes and twists reveals it:
\begin{equation}
    \overline{2(20)}1A \flype 2(2\overline{1})10A \flype 2(\overline{2}0)10A \twist 2(\overline{2}0)A_x01 \ring A_x0(2(\overline{2}0)0)01.
\end{equation}

\begin{definition}[Prime tangle]\label{def:prime_tangle}
    A tangle  $T$  in the ball $B^3$ is called prime (or non-composite) if there does not exist a 2-sphere $S^2 \subset B^3$  that intersects  $T$  transversely in exactly two points, such that the part of $T$ inside of the $S^2$ sphere is not trivial, i.e. if we close the inner part of $T$ by a geodesic on $S^2$, we do not obtain the unknot, see examples in the \autoref{fig:nonprime_tangle}.
\end{definition}
\begin{figure}
    \centering
    \includegraphics[scale=0.5, page=4]{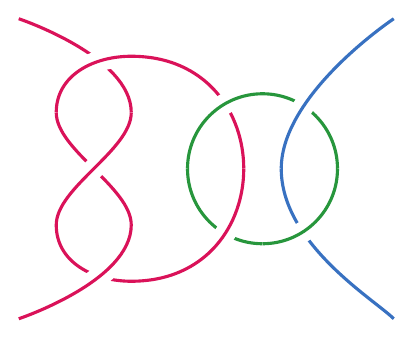} \qquad \qquad
    \includegraphics[scale=0.5, page=3]{tanglesx.pdf}
    \caption{Examples of non-prime tangles $0(20)$ and $0(30)$ (respectively).}
    \label{fig:nonprime_tangle}
\end{figure}

A subtangle inside a 2-sphere (from definition above) can be moved anywhere along the the arc. Every tangle of the form $0A$, where $A$ is a tangle which cannot be reduced into $0$, is a composite tangle. Essentially, $0A$ is tangle $A$ with connected NW and SW ends with an additional open arc. A simple example of such non-prime tangle is a $0(2n)$ tangle, where $n$ is any integral tangle: 
\begin{align}
    0(2n) = 0(20)0n \twist 0n0(20) \reidone 0(20)
\end{align}

In the case of rational tangles, it was shown in \cite{kauffman2004classification} that the twist, flype, and IInd elementary move generate isotopy. In the case of prime algebraic tangles, we state the following conjecture:
\begin{conjecture}\label{conjecture}
Two prime algebraic tangles are isotopic if and only if they are related by a finite sequence of twist, flype, ring, and elementary moves. 
\end{conjecture}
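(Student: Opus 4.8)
The statement is a biconditional, so the plan is to treat the two implications separately; the reverse implication is essentially definitional, while all the content lies in the forward direction. For the reverse direction, the plan is to observe that each of the four move types is by construction an ambient isotopy of $(B^3,\cdot)$ fixing the Conway sphere: the twist (\autoref{def:twist_move}) and the flype (\autoref{def:flype_move}) are rotations of a subtangle by $\pi$ about the $x$-axis and the principal diagonal $p$, the ring move (\autoref{def:ring_move}) slides a subtangle bounded by a $2$-sphere along the strand, and the elementary moves (\autoref{def:elementary_moves}) are the Reidemeister I and II moves of \autoref{thm:reidemeister}. Hence any two tangles related by a finite sequence of them are isotopic.

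For the forward direction the plan is to route every isotopy through the canonical representation of \autoref{sec:tree}, establishing two facts. First, a \textbf{reduction} statement: every prime algebraic tangle diagram can be transformed into its canonical form using only twist, flype, ring, and elementary moves and their inverses. This amounts to re-reading the normalization algorithm of \autoref{sec:tree} and checking that each step it performs -- reassociating sums and products via \eqref{eq:associativity}, sliding integral tangles across subtangles, cancelling $1\!+\!\overline{1}$ pairs, and reordering factors -- is realized by one of the four moves. Second, a \textbf{completeness} statement: two prime algebraic tangles that are isotopic have the \emph{same} canonical form. Granting both, if $A$ and $B$ are isotopic they share a canonical form $C$; reduction connects $A$ to $C$ and $B$ to $C$ by moves, and concatenating yields a move-sequence from $A$ to $B$.

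The completeness statement is where an independent, genuinely topological invariant is required, since the uniqueness proven in \autoref{sec:tree} certifies that distinct canonical forms are non-isotopic only once the canonical form is known to be a complete invariant. The plan is to invoke the classification of arborescent (algebraic) links and tangles of Bonahon and Siebenmann \cite{bonahon2010new}: an algebraic tangle is encoded by a weighted planar tree, with isotopy corresponding to an explicit equivalence generated by a short list of combinatorial tree moves. I would show (i) that the canonical tree of \autoref{sec:tree} is a normal form for this equivalence, hence a complete invariant, and (ii) that each generator of the equivalence is realized on the diagram level by a twist, flype, or ring move -- the flype accounting for the flyping ambiguity and the ring move for the slide across an essential Conway sphere. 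Equivalently, one may pass to the double branched cover of $B^3$ over the two arcs, a graph manifold whose JSJ decomposition together with the peripheral data on the Conway sphere is a complete invariant, with primeness guaranteeing no inessential sphere that would introduce extra ambiguity.

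The main obstacle is the exhaustiveness half of completeness: proving that the four-move list is \emph{sufficient}, i.e.\ that no isotopy of a prime algebraic tangle escapes a factorization through these moves. Two points carry the difficulty. The ring move is the genuinely new generator relative to the rational case of \cite{kauffman2004classification}; it corresponds to sliding a subtangle across a sphere, and one must verify that, together with flypes, it generates \emph{all} mutation-type ambiguities of the tree description -- this is exactly where primeness is used to exclude composite decompositions that would force further moves. Moreover, the Reidemeister sequence supplied by \autoref{thm:reidemeister} may pass through non-algebraic intermediate diagrams, so a purely diagrammatic induction on crossing number (with rational tangles and Kauffman--Lambropoulou as the base case) does not obviously close; controlling those intermediate stages is precisely what forces the passage to the arborescent-tree or branched-cover viewpoint, and making the correspondence between the topological moves and the diagrammatic four moves exact is the crux of the argument.
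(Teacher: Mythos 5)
The statement you are trying to prove is stated in the paper as \autoref{conjecture} and is explicitly left \emph{open}: the authors prove only (\autoref{thm:main}) that the canonical form is unique \emph{up to} twist, flype, ring, and elementary moves, i.e.\ that the canonical form is well defined on the equivalence classes generated by those moves. They do not prove that these classes coincide with isotopy classes, which is exactly the content of the conjecture. So there is no proof in the paper to match yours against, and your proposal should be judged as a standalone argument.

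As such, it has a genuine gap. Your reverse direction is fine (each move is an isotopy by construction), and your reduction step is plausible because \autoref{alg:treefix} realizes each of its local moves $\alpha_\mathbb{Z},\alpha_\mathbb{Q},\beta_\mathbb{Q},\beta_0,\beta_-,\beta_1,\gamma$ as compositions of the four allowed moves. But your completeness step --- that isotopic prime algebraic tangles have the same canonical form --- is precisely the conjecture restated, and you do not close it. Invoking Bonahon--Siebenmann \cite{bonahon2010new} is a reasonable strategy, but it requires (i) an exact dictionary between the paper's binary-tree canonical form and the Bonahon--Siebenmann weighted planar trees, (ii) a verification that every generator of their combinatorial equivalence is realized by one of the four diagrammatic moves (in particular that the ring move accounts for \emph{all} slides across essential Conway spheres, and that no mutation-type ambiguity escapes the list), and (iii) a treatment of the closed components the paper allows inside the ball, which the arborescent classification as usually stated does not cover. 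None of these is carried out, and you yourself flag the correspondence as "the crux of the argument." What you have is a credible research program for settling the conjecture, not a proof; the paper's own \autoref{thm:main}, \autoref{thm:algorithm_invariant_other}, and \autoref{thm:algorithm_invariant_flype} supply only the "consistency" half (invariance of the canonical form under the moves), never the "sufficiency" half.
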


\section{An algebraic tangle as a tree}\label{sec:tree}

\subsection{Tree notation}
Due to the fact that the multiplication is left-associative, 
left factors and right factors behave differently.
This clearly affects the behavior of tangles; however, for more complex tangles, the abundance of brackets makes the algebraic representation difficult to read.
In such cases, the expression tree representation shows its advantage. For example, every algebraic tangle $A$ can be represented as a product of two subtangles, $A = LR$, where both subtangles can similarly be decomposed into a product of sub-subtangles. This process can be repeated indefinitely (in fact, ad infinitum, since $1 = 10$):
\begin{equation}\label{eq:tree_decomposition_full}
    A = LR = L_LR_L(L_RR_R) = L_{L_L}R_{L_L}(L_{R_L}R_{R_L})\bigl(L_{L_R}R_{L_R}(L_{R_R}R_{R_R})\bigr) = \cdots
\end{equation}
For clarity, one can represent such algebraic tangle as a binary expression tree, see \autoref{tree:expression_tree}.
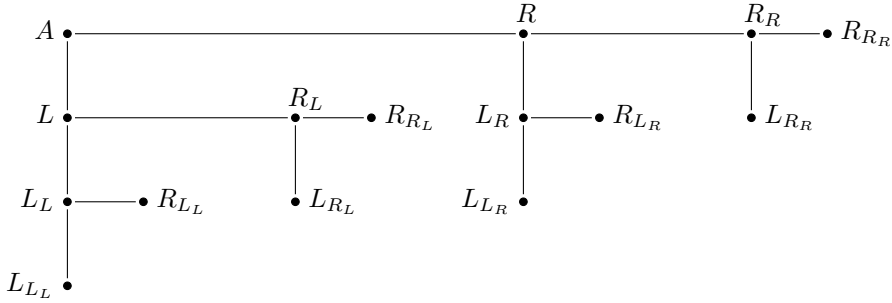
\begin{figure}[ht!]
    \centering
    \begin{tikzpicture}
        \node[tree={T}, row sep={1cm}]
        {
            \node (T) {};
            &&&&&& \node (R) {};
            &&& \node (RR) {};
            & \node (RRR) {};\\
            \node (L) {};
            &&& \node (LR) {};
            & \node (LRR) {};
            && \node (RL) {};
            & \node (RLR) {};
            && \node (RRL) {};\\
            \node (LL) {};
            & \node (LLR) {};
            && \node (LRL) {};
            &&& \node (RLL) {};\\
            \node (LLL) {};\\ 
        };
        \node[labelleft={T}] {$A$};
        \node[labelleft={L}] {$L$};
        \node[labelleft={LL}] {$L_L$};
        \node[labelleft={LLL}] {$L_{L_L}$};
        \node[labelabove={R}] {$R$};
        \node[labelabove={RR}] {$R_R$};
        \node[labelright={RRR}] {$R_{R_R}$};
        \node[labelleft={RL}] {$L_R$};
        \node[labelleft={RLL}] {$L_{L_R}$};
        \node[labelright={LLR}] {$R_{L_L}$};
        \node[labelright={LRL}] {$L_{R_L}$};
        \node[labelabove={LR}] {$R_L$};
        \node[labelright={LRR}] {$R_{R_L}$};
        \node[labelright={RLR}] {$R_{L_R}$};
        \node[labelright={RRL}] {$L_{R_R}$};
        \draw 
            (T) edge (R)
            (T) edge (L)
            (R) edge (RR)
            (R) edge (RL)
            (RR) edge (RRR)
            (RR) edge (RRL)
            (RL) edge (RLR)
            (RL) edge (RLL)
            (L) edge (LR)
            (L) edge (LL)
            (LR) edge (LRR)
            (LR) edge (LRL)
            (LL) edge (LLR)
            (LL) edge (LLL);
    \end{tikzpicture}
    \caption{Tangle $A=L_{L_L}R_{L_L}(L_{R_L}R_{R_L})\bigl(L_{L_R}R_{L_R}(L_{R_R}R_{R_R})\bigr)$ represented as an expression tree. Every node represents a tangle which is a product of its left-child by its right-child.}
    \label{tree:expression_tree}
\end{figure}

In such an expression tree, every node represents some subtangle of tangle $A$ and a product of its children.
A node with no children, resulting from stopping the division of the tree at some point, is called a \emph{leaf}.

If we expand only left children at each level, our representation stays clearer and we can write the tangle concisely in the multiplication representation:
\begin{equation}\label{eq:tree_decomposition_mul}
    A = LR = L_LR_LR = L_{L_L}R_{L_L}R_LR 
    = L_{L_{L_{L_{\resizebox{.5em}{!}{$\ddots$}}}}}\!\!\!\!
    R_{L_{L_{L_{\resizebox{.5em}{!}{$\ddots$}}}}} \!\!\!\!\cdots R_LR 
    := L_N\prod_{i=N}^1 R_i.
\end{equation}
Note that if every $R_i$ term represents an integral tangle, and $L_N$ represent a rational tangle, then the whole tangle is rational.

An analogous situation occurs when we expand only the right children at each level, allowing us to concisely represent the tangle using an additive representation:
\begin{align}\label{eq:tree_decomposition_sum}
    \begin{matrix*}[l]
    A &= LR = L(L_RR_R) = L\bigl(L_R(L_{R_R}R_{R_R})\bigr) = 
    L\biggl(L_R\Bigl(\cdots\bigl(L_{R_{R_{R_{\resizebox{.5em}{!}{$\ddots$}}}}}\!\!\!\!
    R_{R_{R_{R_{\resizebox{.5em}{!}{$\ddots$}}}}}\!\!\!\!\bigr)\cdots\Bigr)\biggr) =\\
    & =L0+\Bigl(L_R0+\Bigl(\cdots+\bigl(L_{R_{R_{R_{\resizebox{.5em}{!}{$\ddots$}}}}}\!\!\!\!0\!+\!
    R_{R_{R_{R_{\resizebox{.5em}{!}{$\ddots$}}}}}\!\!\!\!\bigr)\cdots\Bigr)\Bigr) =\\
    &=L0+L_R0+\cdots+L_{R_{R_{R_{\resizebox{.5em}{!}{$\ddots$}}}}}\!\!\!\!0\!+\!
    R_{R_{R_{R_{\resizebox{.5em}{!}{$\ddots$}}}}}\!\!\!\! := \left(\displaystyle\sum_{i=1}^N L_i0\right) + R_N.
    \end{matrix*}
\end{align}
In \cite{conway1970enumeration} Conway used special comma notation for such tangles if $R_N = n \in \integral$: $(L_1, L_2, \cdots, L_N, +++/---)$, where the number of +/- symbols is equal to the value of $n$ (\qq{$+$} for positive, \qq{$-$} for negative integrals).

Sometimes we want to represent only a part of an expression tree. In this case it may be important if the top of the subtree is a left-child or a right-child. It appears that tree-tops behave exactly as right-children. Therefore, we introduce some extra notation for edges presented in \autoref{fig:extra_tree}.
\begin{figure}[h!]
    \centering
    \begin{tikzpicture}
        \node[tree={mid}, row sep={.45cm}] at (0,0) (tree1)
        {
            \node[phantom] (T1) {}; \\
            \node[phantom] (mid) {}; \\
            \node (L1) {};\\
        };
        \node[tree={T2}] at (1,0) (tree2)
        {
            \node[phantom] (T2) {}; & \node (R2) {}; \\
        };
        \node[tree={T3}] at (3,0) (tree3)
        {
            \node[phantom] (T3) {}; & \node (R3) {}; \\
        };
        \node[tree={T4}] at (5.5,0) (tree4)
        {
            \node (T4) {};\\
        };
        \draw (T1) edge (L1);
        \draw (T2) edge (R2);
        \draw (T3) edge[dottededge] (R3);
        \node[labelright={L1}] {$A$};
        \node[labelright={R2}] {$B$};
        \node[labelright={R3}] {$C$};
        \node[labelright={T4}] {$D$};
    \end{tikzpicture}
    \caption{Subtangle $A$ is a left-child, subtangle $B$ is a right-child, subtangle $C$ is either a tree-top or a right-child, and subtangle $D$ is either a child or not.}\label{fig:extra_tree}
\end{figure}
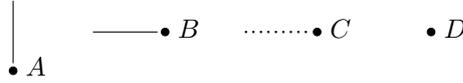

\begin{definition}[Power notation]\label{def:power_notation}
    We will use power notation $A^B$ to emphasize that $B$ is a subtangle of the tangle $A$. The subtangle $B$ may be equal to $A$, a right-child of $A$, or a right-child of the right-child of $A$, and so on.
    \begin{figure}[h!]
        \centering
        \begin{tikzpicture}[baseline=(T.base)]
            \node[smallnode] (T) {};
            \node[labelright={T}] {$A^B$};
        \end{tikzpicture}
        \begin{tikzpicture}[baseline=(A.base)]
            \node (A) {$\!=$};
        \end{tikzpicture}
        \begin{tikzpicture}[baseline=(T.base)]
            \node[tree={R}] (tree1)
            {
                \node (T) {}; & \node (R) {};\\
            };
            \node[labelleft={T}] {$A$};
            \node[labelright={R}] (B) {$B$};
            \draw (T) edge[dottededge] (R);
        \end{tikzpicture}
        \begin{tikzpicture}[baseline=(A.base)]
            \node (A) {$\in \Bigg \{$};
        \end{tikzpicture}
        \begin{tikzpicture}[baseline=(T.base)]
            \node[smallnode] (T) {};
            \node[labelright={T}] {$B$};
        \end{tikzpicture}
        \begin{tikzpicture}[baseline=(A.base)]
            \node (A) {$, \;$};
        \end{tikzpicture}
        \begin{tikzpicture}[baseline=(T.base)]
            \node[tree={T}] at (tree3)
            {
                \node (T) {}; & \node (R) {};\\
            };
            \node[labelleft={T}] {$A$};
            \node[labelright={R}] (B) {$B$};
            \draw (T) edge (R);
        \end{tikzpicture}
        \begin{tikzpicture}[baseline=(A.base)]
            \node (A) {$, \;$};
        \end{tikzpicture}
        \begin{tikzpicture}[baseline=(T.base)]
            \node[tree={T}] at (tree3)
            {
                \node (T) {}; & \node (R) {}; & \node (RR) {};\\
            };
            \node[labelleft={T}] {$A$};
            \node[labelright={RR}] (B) {$B$};
            \draw (T) edge (R)
                  (R) edge (RR);
        \end{tikzpicture}
        \begin{tikzpicture}[baseline=(A.base)]
            \node (A) {$,\; \ldots \Bigg \}$};
        \end{tikzpicture}
        \caption{Tree representation of power notation $A^B$. We use dotted-edge notation, to skip part of the tree. Note that $A$ may be equal to $B$, then $A^B=B$.}
    \end{figure}
\end{definition}
Note that $A$ may be equal to $B$, then $A^B=B$. Moreover, when $A$ is an integral tangle $A=n$, then $A^B=n^B=B$.

\subsection{Right leaves -- integral tangles}
{
Integral tangles (\subfigref{fig:rational_tangles}{a}) appear to be ideal candidates for the right leaves of algebraic tangles, since it is possible to use a twist move to change their position. Moreover, they have high symmetry, since rotations around $x,y,z$ axes do not affect them: $n = n_x = n_y = n_z$.} Indeed,
\begin{align}\label{eq:int_symmetry}
    \mathmakebox[5.5cm][l]{n_x = \left[\sum_{i=1}^n 1 \right]_x \!= \sum_{i=1}^n 1_x = \sum_{i=1}^n 1 = n;}\qquad
    \mathmakebox[5.5cm][l]{n_y = \left[\sum_{i=1}^n 1 \right]_y \!= \sum_{i=n}^1 1_y = \sum_{i=n}^1 1 = n.}
\end{align}

{
It is convenient to represent each tangle $A$ using power notation $A^n$} to emphasize its rightmost integral tangle $n$. Let us show some properties of $A^n$ notation.
\begin{lemma}\label{thm:rightmost_algebraic}
    An algebraic tangle $A^n$ can be decomposed as $A^n=A^00n$.
\end{lemma}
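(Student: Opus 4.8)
The plan is to reduce the statement to the additive representation of $A$ developed in \eqref{eq:tree_decomposition_sum}, together with the two algebraic identities relating addition and multiplication: $AB = A0 + B$ from \eqref{eq:addition_multiplication}, and the neutrality rules $A00 = A$ and $0 + A = A = A + 0$ from \eqref{eq:bracket_juggling}. Since the exponent $n$ in $A^n$ marks the rightmost integral leaf, reached by descending along right-children (\autoref{def:power_notation}), expanding only the right children of $A$ as in \eqref{eq:tree_decomposition_sum} places $n$ as the final summand. Concretely, I would write
\begin{equation*}
    A^n = \left(\sum_{i=1}^N L_i0\right) + n,
\end{equation*}
where the $L_i$ are the left children along the rightmost path. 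The tangle $A^0$, obtained by replacing the rightmost leaf $n$ with $0$, is then exactly $\left(\sum_{i=1}^N L_i0\right) + 0 = \sum_{i=1}^N L_i0$ by neutrality of $0$ under addition, so that $A^n = A^0 + n$.

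The core of the argument is a short computation showing $A^00n = A^0 + n$. Using left-associativity I first read $A^00n$ as $((A^00)\,n)$, apply $XY = X0 + Y$ with $X = A^00$ to get $(A^00)0 + n$, and finally collapse the double trailing $0$ via $A00 = A$ to obtain $A^000 = A^0$. Hence $A^00n = A^0 + n$, which combined with $A^n = A^0 + n$ yields the claimed identity $A^n = A^00n$.

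Two points deserve care. First, the power notation must be read correctly: $A^0$ denotes the tangle obtained from $A$ by setting its rightmost integral leaf to $0$, and is not a multiplication; I would stress that the additive expansion \eqref{eq:tree_decomposition_sum} is precisely what makes this substitution act on the top-level summand rather than deep inside the tree. Second, the degenerate case $A = n$ (empty sum, $N = 0$) should be verified separately: there $A^0 = 0$ and $A^00n = 00n = n = A^n$ using $00A = A$ from \eqref{eq:bracket_juggling}. I expect the only genuine obstacle to be bookkeeping, namely confirming that pulling the rightmost leaf out to the top of the additive representation leaves it unrotated, so that it appears as $+n$ rather than $+n_x$. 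This is immediate from the right-child expansion in \eqref{eq:tree_decomposition_sum}, and is in any case harmless since integral tangles are rotation-invariant by \eqref{eq:int_symmetry}.
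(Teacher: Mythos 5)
Your proof is correct and follows essentially the same route as the paper's: both expand $A^n$ via the right-child (additive) decomposition \eqref{eq:tree_decomposition_sum} to get $A^n=\bigl(\sum_i L_i0\bigr)+n$, and then use associativity of addition together with $AB=A0+B$ and $A00=A$ to re-bracket this as $A^00n$. Your separate treatment of the degenerate case $A=n$ matches the paper's remark that this case is trivial.
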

\begin{proof}
    If $A^n=n$, then the statement is trivial. Otherwise, let us expand the $A^n$ as in \autoref{eq:tree_decomposition_sum} and rearrange the brackets (due to associativity of addition):
    \begin{align*}
        \begin{array}{rl}
             A^n
             &=
             a_1\Bigl(a_2\bigl(\cdots(a_Nn)\cdots\bigr)\Bigr) =
             \left(\displaystyle\sum_{i=1}^N a_i0\right)\!+\!n = a_10\!+\!\Bigl(a_20\!+\!\bigl(\cdots\!+\!(a_N0)\cdots\bigr)\Bigr)\!+\!n \\
             &= a_1\Big(a_2\bigl(\cdots(a_N0)\cdots\bigr)\Bigr)0n = A^00n.
        \end{array}
    \end{align*}
\end{proof}

\begin{lemma}\label{thm:rotation_algebraic}
    A rightmost integral tangle $n$ of tangle $A^n$ can stay rightmost during rotations:
    \begin{align*}
        [A^n]_x &= [A_x]^n = A_x^n\\
        [A^n]_y &=
        \begin{cases}
            [A_y]^n\textrm{, if $n$ is even,}\\
            [A_z]^n\textrm{, if $n$ is odd,}\\
        \end{cases} \\
        [A^n]_z &=
        \begin{cases}
            [A_x]^n\textrm{, if $n$ is even,}\\
            [A_z]^n\textrm{, if $n$ is odd.}\\
        \end{cases}
    \end{align*}
\end{lemma}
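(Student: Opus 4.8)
The plan is to reduce every rotation to an action on the additive decomposition of $A^n$ in which the rightmost integral is displayed explicitly. By \autoref{thm:rightmost_algebraic} we may write $A^n = A^0 0 n = A^0 + n$, so that all three identities amount to computing $[A^0 + n]_x$, $[A^0 + n]_y$, $[A^0 + n]_z$ and rearranging each result back into the form $(\text{body}) + n$, i.e.\ into the power notation $B^n$. Three ingredients drive the computation: the additive/antiadditive behaviour of the rotations \eqref{eq:rotations_sum}, the full symmetry of the integral $n = n_x = n_y = n_z$ \eqref{eq:int_symmetry}, and the fact that a body stays a body under any rotation, since each summand $a_i 0$ satisfies $[a_i 0]_x = [a_i]_y 0$ and $[a_i 0]_y = [a_i]_x 0$ by \eqref{eq:rotations_multiplication} and so again ends in a product by $0$. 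This last point is what guarantees that $n$ remains the \emph{rightmost} integral after rotating, which is precisely what legitimises the power notation on the right-hand side.

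First I would dispatch the $x$-axis case, which is immediate: since $\rho_x$ is additive \eqref{eq:rotations_sum} and $n_x = n$ \eqref{eq:int_symmetry}, we get $[A^0 + n]_x = [A^0]_x + n_x = [A^0]_x + n$. The integral stays on the right, the body transforms to $[A^0]_x = [A_x]^0$, and hence $[A^n]_x = [A_x]^n = A_x^n$.

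The $y$- and $z$-axis cases form the heart of the argument, and this is where the extra work appears. Because $\rho_y$ (and likewise $\rho_z$) is antiadditive, applying it throws the integral to the \emph{left}: $[A^0 + n]_y = n_y + [A^0]_y = n + [A^0]_y$. To restore the power form I would push $n$ back across the body with the twist move of \autoref{def:twist_move}, $n + A \twist A_{x^n} + n$, which rotates the body by an additional $\rho_x^{\,n}$. Since $\rho_x^{\,n}$ is the identity for even $n$ and $\rho_x$ for odd $n$, this is exactly where the parity split is born: $[A^n]_y = [A^0]_{y x^n} + n$. It then remains to collapse the composite rotation of the body using the Klein four-group relations among $\rho_x,\rho_y,\rho_z$ coming from \autoref{def:transformations} (e.g.\ $\rho_x\rho_y=\rho_z$), which turns $[A^0]_{y x^n}$ into $[A_y]^0$ for even $n$ and into $[A_z]^0$ for odd $n$, yielding $[A_y]^n$ and $[A_z]^n$ respectively. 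The $z$-case runs along identical lines: one writes $\rho_z$ as a composition of the other two rotations, notes that it too is antiadditive, and applies the same twist-and-simplify procedure to obtain its two cases.

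The step I expect to be the main obstacle is the rotation bookkeeping in the antiadditive cases. One must (i) correctly compose the rotation coming from $\rho_y$ or $\rho_z$ with the $\rho_x^{\,n}$ introduced by the twist, keeping in mind that inside a product the left and right factors are acted on by \emph{different} rotations \eqref{eq:rotations_multiplication}, so each rotation must be routed to the correct factor; and (ii) verify at every stage that the manipulated tangle is genuinely of the form $(\text{body}) + n$ with $n$ rightmost, so that reading off the power notation is justified. Once the Klein four-group relations are applied consistently, the parity-dependent right-hand sides fall out.
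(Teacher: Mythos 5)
Your proposal is correct and follows essentially the same route as the paper's own proof: both expand $A^n$ into its additive form $\left(\sum_i a_i0\right)+n = A^0+n$, use the additivity of $\rho_x$ and the antiadditivity of $\rho_y,\rho_z$ together with $n=n_x=n_y=n_z$, and then twist $n$ back to the right, which introduces the extra $\rho_x^{\,n}$ on the body and produces the parity split. The only cosmetic differences are that you invoke \autoref{thm:rightmost_algebraic} for the decomposition instead of writing out the sum $\sum_i a_i0$ explicitly, and that you make the Klein four-group relations (e.g.\ $\rho_x\rho_y=\rho_z$) explicit where the paper applies them silently.
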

\begin{proof}
    We rotate the tangle $A$ (\autoref{eq:rotations_sum},\ref{eq:rotations_multiplication}) after expanding its right children (\autoref{eq:tree_decomposition_sum}):
    \begin{align*}\label{eq:rotation_simple_algebraic}
         [A^n]_x &= \left[\left(\sum_{i=1}^N a_i0\right) + n\right]_x = \left(\sum_{i=1}^N [a_i]_y0\right) + n = [A_x]^n = A_x^n,\\
         [A^n]_y &= \left[\left(\sum_{i=1}^N a_i0\right) + n\right]_y 
         = n + \sum_{i=N}^1 [a_i]_x0 \twist\\
         &\twist
         \begin{cases}
             \left(\sum_{i=N}^1 [a_i]_x0\right) + n\textrm{, if $n$ is even,}\\
             \left(\sum_{i=N}^1 [a_i]_z0\right) + n\textrm{, if $n$ is odd,}\\
         \end{cases}
         =
         \begin{cases}
            [A_y]^n\textrm{, if $n$ is even,}\\
            [A_z]^n\textrm{, if $n$ is odd,}\\
        \end{cases} \\
         [A^n]_z &= \left[\left(\sum_{i=1}^N a_i0\right) + n\right]_z 
         = n + \sum_{i=N}^1 [a_i]_z0 \twist \\
         &\twist
         \begin{cases}
             \left(\sum_{i=N}^1 [a_i]_z0\right) + n\textrm{, if $n$ is even,}\\
             \left(\sum_{i=N}^1 [a_i]_x0\right) + n\textrm{, if $n$ is odd.}\\
         \end{cases}
         =
         \begin{cases}
            [A_x]^n\textrm{, if $n$ is even,}\\
            [A_z]^n\textrm{, if $n$ is odd.}\\
        \end{cases}
    \end{align*}
    As we see, rotation around $y$ or $z$ axis reverses order of left children, but the rightmost integral $n$ can stay rightmost, at the expense of rotating left children around $y$ axis if $n$ is odd.
\end{proof}

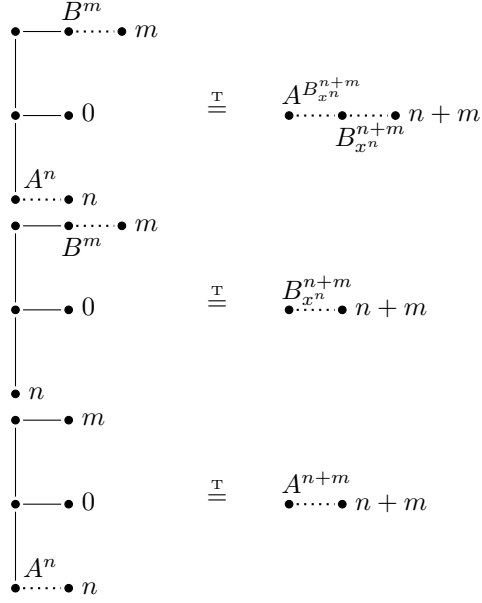
\begin{figure}
    \centering
    \begin{tabular}{lll}
        \begin{tikzpicture}[baseline=(L1.base)]
            \node[tree_mid={L1}, row sep={1cm}] (tree1)
            {
                \node (T1) {}; & \node (R1) {}; & \node (RR1) {};\\
                \node (L1) {};  & \node (LR1) {};\\
                \node (LL1) {}; & \node (LLR1) {};\\
            };
            \node[labelabove={R1}] {$B^m$};
            \node[labelright={RR1}] {$m$};
            \node[labelright={LR1}] {$0$};
            \node[labelaboveright={LL1}] {$A^n$};
            \node[labelright={LLR1}] {$n$};
            \draw 
                (T1) edge (R1)
                (T1) edge (L1)
                (L1) edge (LR1)
                (L1) edge (LL1)
                (R1) edge[dottededge] (RR1)
                (LL1) edge[dottededge] (LLR1);
        \end{tikzpicture}
        &
        \begin{tikzpicture}[baseline=(aa.base)]
            \node (aa) {$\twist$};
        \end{tikzpicture}
        &
        \begin{tikzpicture}[baseline=(tree2.base)]
            \node[tree_mid={T2}] (tree2)
            {
                \node (T2) {}; & \node (R2) {}; & \node (RR2) {};\\
            };
            \node[labelabove={T2}] {$A^{B_{x^n}^{n+m}}$};
            \node[labelbelow={R2}] {$B_{x^n}^{n+m}$};
            \node[labelright={RR2}] {$n+m$};
            \draw 
                (T2) edge[dottededge] (R2)
                (R2) edge[dottededge] (RR2);
        \end{tikzpicture}\\
        \begin{tikzpicture}[baseline=(L1.base)]
            \node[tree_mid={L1}, row sep={1cm}] (tree1)
            {
                \node (T1) {}; & \node (R1) {}; & \node (RR1) {};\\
                \node (L1) {};  & \node (LR1) {};\\
                \node (LL1) {}; \\
            };
            \node[labelbelow={R1}] {$B^m$};
            \node[labelright={RR1}] {$m$};
            \node[labelright={LR1}] {$0$};
            \node[labelright={LL1}] {$n$};
            \draw 
                (T1) edge (R1)
                (T1) edge (L1)
                (L1) edge (LR1)
                (L1) edge (LL1)
                (R1) edge[dottededge] (RR1);
        \end{tikzpicture}
        &
        \begin{tikzpicture}[baseline=(aa.base)]
            \node (aa) {$\twist$};
        \end{tikzpicture}
        &
        \begin{tikzpicture}[baseline=(tree2.base)]
            \node[tree_mid={T2}] (tree2)
            {
                \node (T2) {}; & \node (R2) {};\\
            };
            \node[labelabove={T2}] {$B_{x^n}^{n+m}$};
            \node[labelright={R2}] {$n+m$};
            \draw 
                (T2) edge[dottededge] (R2);
        \end{tikzpicture}\\
        \begin{tikzpicture}[baseline=(L1.base)]
            \node[tree_mid={L1}, row sep={1cm}] (tree1)
            {
                \node (T1) {}; & \node (R1) {}; & \node[phantom] (RR1) {};\\
                \node (L1) {};  & \node (LR1) {};\\
                \node (LL1) {}; & \node (LLR1) {};\\
            };
            \node[labelright={R1}] {$m$};
            \node[labelright={LR1}] {$0$};
            \node[labelaboveright={LL1}] {$A^n$};
            \node[labelright={LLR1}] {$n$};
            \draw 
                (T1) edge (R1)
                (T1) edge (L1)
                (L1) edge (LR1)
                (L1) edge (LL1)
                (LL1) edge[dottededge] (LLR1);
        \end{tikzpicture}
        &
        \begin{tikzpicture}[baseline=(aa.base)]
            \node (aa) {$\twist$};
        \end{tikzpicture}
        &
        \begin{tikzpicture}[baseline=(tree2.base)]
            \node[tree_mid={T2}] (tree2)
            {
                \node (T2) {}; & \node (R2) {}; & \node[phantom] (RR2) {};\\
            };
            \node[labelabove={T2}] {$A^{n+m}$};
            \node[labelright={R2}] {$n+m$};
            \draw 
                (T2) edge[dottededge] (R2);
        \end{tikzpicture}\\
    \end{tabular}
    \caption{Isotopy preserving move $A^n\!+\!B^m \twist A^{B_{x^n}^{n+m}}$ represented on a tree. Note that if $A^n=n$ or $B^m=m$ the result simplifies to $B_{x^n}^{n+m}$ or $A^{n+m}$ respectively.}\label{fig:power_add}
\end{figure}

\begin{lemma}\label{thm:generalized_twist}
    If two tangles $A^n$ and $B^m$ are added together, $B_{x^n}^{n+m}$ becomes the right subtangle of $A$ (\autoref{fig:power_add}):
    \begin{equation*}
        A^n\!+\!B^m \twist A^{B_{x^n}^{n+m}}.
    \end{equation*}
\end{lemma}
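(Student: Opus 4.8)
The plan is to reduce everything to the additive representation, apply the twist move once to slide the integral $n$ all the way to the right, and then repackage the result into power notation. First I would use \autoref{thm:rightmost_algebraic} to write both summands additively, $A^n = A^0 + n$ and $B^m = B^0 + m$ (recall $X0Y = X\!+\!Y$ by \eqref{eq:addition_multiplication}). Associativity of addition \eqref{eq:associativity} then gives
\[
    A^n \!+\! B^m = A^0 \!+\! n \!+\! B^0 \!+\! m .
\]

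Next I would apply the twist move in its practical form $n\!+\!C \twist C_{x^n}\!+\!n$ (\autoref{def:twist_move}) to the subtangle $C = B^0 \!+\! m$ sitting to the right of the integral $n$. The key computation is that $\rho_x$ distributes over the sum and fixes integral tangles: by additivity of $\rho_x$ in \eqref{eq:rotations_sum}, by $m = m_x$ from \eqref{eq:int_symmetry}, and by \autoref{thm:rotation_algebraic} applied to $B^0$, one obtains $(B^0\!+\!m)_{x^n} = B_{x^n}^0 \!+\! m$. Hence
\[
    A^0 \!+\! n \!+\! B^0 \!+\! m \twist A^0 \!+\! (B^0\!+\!m)_{x^n} \!+\! n = A^0 \!+\! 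B_{x^n}^0 \!+\! m \!+\! n .
\]

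Finally I would merge the two rightmost integrals. Since integral tangles add commutatively, $m\!+\!n$ is the integral tangle $n\!+\!m$, and \autoref{thm:rightmost_algebraic} read backwards repackages $B_{x^n}^0 \!+\! (n\!+\!m) = B_{x^n}^{n+m}$; the outer $A^0 \!+\! (\,\cdot\,)$ is by definition the power notation $A^{(\cdot)}$, so
\[
    A^0 \!+\! B_{x^n}^0 \!+\! (n\!+\!m) = A^0 \!+\! B_{x^n}^{n+m} = A^{B_{x^n}^{n+m}},
\]
as claimed. The degenerate cases recorded in \autoref{fig:power_add} fall out immediately: if $A^n=n$ then $n^{B_{x^n}^{n+m}} = B_{x^n}^{n+m}$, and if $B^m=m$ then $B_{x^n} = m_{x^n}=m$ forces $A^{m^{n+m}} = A^{n+m}$.

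I expect the main obstacle to be the bookkeeping in the middle step. One must check that a single application of the twist move is legitimate on the embedded subtangle $B^0\!+\!m$ (the surrounding $A^0\!+\!$ merely supplies context for a localizable isotopy) and, more importantly, that the $n$-fold $x$-rotation rotates only the non-integral part $B$ into $B_{x^n}$ while leaving both integrals $n$ and $m$ untouched. This is exactly what \autoref{thm:rotation_algebraic} together with \eqref{eq:rotations_sum} and \eqref{eq:int_symmetry} guarantees, and it is what permits the two integrals to fuse into $n\!+\!m$; the remaining identifications are purely the power-notation convention $A^0 \!+\! C = A^C$.
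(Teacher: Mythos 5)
Your proposal is correct and follows essentially the same route as the paper's proof: expand both summands via \autoref{thm:rightmost_algebraic}, slide the integral $n$ past $B$ with the practical form of the twist move, and repackage the merged integral $n\!+\!m$ into the power notation. The only cosmetic difference is that you twist the block $B^0\!+\!m$ and then split off $m$ by additivity of $\rho_x$, whereas the paper twists $n$ past $B^0$ alone so that $n$ lands directly next to $m$; both reduce to the same identity.
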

\begin{proof}
    \begin{align*}
        A^n\!+\!B^m &= A^0\!+\!n+\!B^0\!+\!m \twist A^0+\!B_{x^n}^0\!+\!n\!+\!m = A^0+\!B_{x^n}^{n+m} =\\ 
        &= \left(\sum_{i=1}^N a_i0\right) + \left[\sum_{i=1}^M b_i(n+m)\right]_{x^n}\!\!\!\!  = A^{B_{x^n}^{n+m}}.
    \end{align*}
\end{proof}

\begin{figure}[t]
    \centering
    \begin{tikzpicture}
        \node at (0,0) {\includegraphics[width=.95\textwidth]{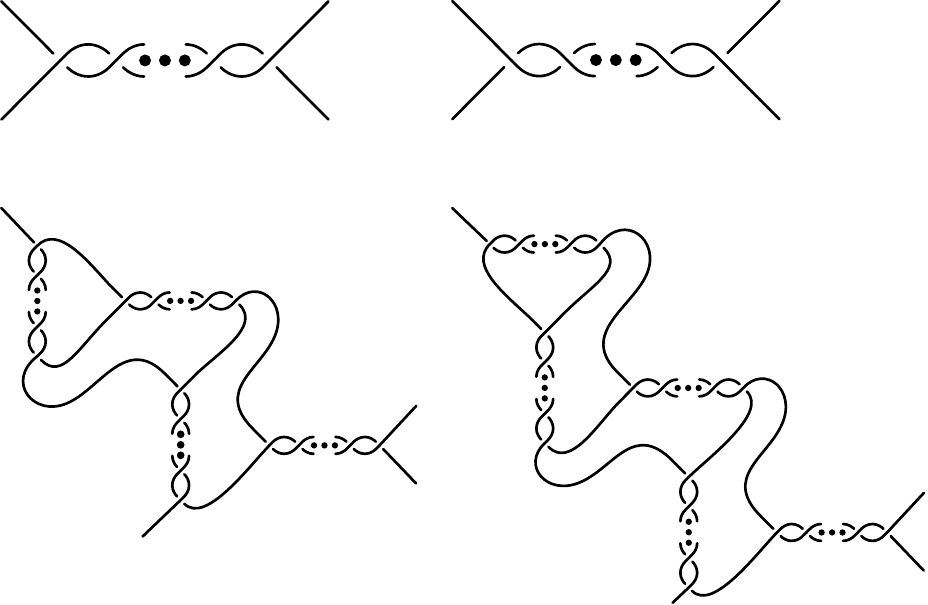}};
        \node at (-6.5,4) {a)};
        \node at (-6.5,1.3) {b)};
        \node at (-4,2.2) {$n=1\!+\!1\!+\!\cdots\!+\!1\!+\!1$};
        \node at (2,2.2) {$\overline{n}=\overline{1}\!+\!\overline{1}\!+\!\cdots\!+\!\overline{1}\!+\!\overline{1}$};
        \node at (-3.9,-3.7) {$klmn$};
        \node at (1.6,-3.7) {$klmnr$};
        \node at (-6.1,0) {$k$};
        \node at (-3.75,.4) {$l$};
        \node at (-4.2,-1.9) {$m$};
        \node at (-1.85,-1.6) {$n$};
        \node at (1.1,1.1) {$k$};
        \node at (.7,-1.1) {$l$};
        \node at (3,-.8) {$m$};
        \node at (2.6,-3.1) {$n$};
        \node at (4.95,-2.7) {$r$};
    \end{tikzpicture}
    \caption{
        a) Integral tangles $n$ as a sum of $1$ tangles, and $\overline{n}$ as sum of $\overline{1}$ tangles. 
        v) Rational tangles $klmn$ and $klmnr$ in a standard form: as a multiplication of integral tangles $k$, $l$, $m$, $n$, and $r$.}
    \label{fig:rational_tangles}
\end{figure}

\subsection{Left leaves -- rational tangles}
Now we show that rational tangles are ideal candidates for left leaves. Let us first recall some known results for rational tangles, which have been thoroughly studied in \cite{conway1970enumeration,burde1986knots,kauffman2004classification}.

\begin{definition}[Rational tangle]\label{def:tangles_rational}
    A \emph{rational tangle} (\subfigref{fig:rational_tangles}{b}) is a tangle $a \in \rational$ that can be expressed as a multiplication of integral tangles: $kl\cdots mn$. If a rational tangle is written as such a product, we say that the rational tangle is in a \emph{standard form}.
\end{definition}
Like integral tangles, rational tangles are invariant to rotations around $x,y,z$ axes, which can be shown by recurrent treatment:
\begin{align}
    [kl\cdots mn]_x &= [kl\cdots m]_yn \\
    [kl\cdots mn]_y &= n0[kl\cdots m0]_y \twist [kl\cdots m0]_{yx^n}0n = [kl\cdots m]_{xy^n}n.
\end{align}

{
More importantly, there exists a perfect invariant of rational tangles -- the continued fraction. Unlike many other knot invariants, the continued fraction uniquely determines a rational tangle, providing a simple method to distinguish them.}

\begin{definition}\label{def:fraction_rational}
The fraction $p/q$, where $p,q \in \mathbb{Z}$  (including $1/0=\infty$), of a rational tangle in a standard form $kl\cdots st$, is defined as the continued fraction:
    \begin{equation*}
     \mathrm{Frac}(kl\cdots mn) = n\!+\!\frac{1}{m+ \frac{1}{\ddots_{\;l + \frac{1}{k}} }} = p/q.
    \end{equation*}
\end{definition}

\begin{theorem}[Conway \cite{conway1970enumeration}]\label{thm:fraction}
    The fraction of a rational tangle is a perfect invariant -- two rational tangles are isotopic if and only if they have the same fraction.
\end{theorem}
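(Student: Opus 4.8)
The plan is to prove both implications by reducing everything to the arithmetic of continued fractions, using the move calculus already set up in this paper. For the forward (invariance) direction — isotopic rational tangles have equal fractions — I would invoke the result of \cite{kauffman2004classification}, cited above, that isotopy of rational tangles is generated by the twist move (\autoref{def:twist_move}), the flype move (\autoref{def:flype_move}), and the second elementary move (\autoref{def:elementary_moves}). It therefore suffices to check that $\mathrm{Frac}$ (\autoref{def:fraction_rational}) is unchanged by each of these three generators. Each check is a finite computation: one rewrites the output of the move in standard form $kl\cdots mn$ and evaluates the continued fraction. The twist move realizes the elementary continued-fraction relations (appending or cancelling a unit twist, together with the absorption $1=10$ from \eqref{eq:ambiguity_1} and the neutrality $00A=A$ from \eqref{eq:bracket_juggling}); the flype realizes an order/sign identity on the entry sequence that leaves the value of the continued fraction fixed. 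Verifying invariance under each generator yields a well-defined map from isotopy classes of rational tangles to $\mathbb{Q}\cup\{\infty\}$.

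For the converse (completeness) — equal fractions force isotopy — the plan is to exhibit a \emph{canonical standard form} that is determined by the fraction alone. Given any standard form $kl\cdots mn$, I would use twists and flypes to normalize it: remove interior $0$-entries via \eqref{eq:bracket_juggling}, absorb units via \eqref{eq:ambiguity_1}, and adjust signs so that the resulting continued-fraction entry sequence is the canonical expansion of its value $p/q$ (e.g.\ the expansion with a prescribed uniform sign pattern). The key number-theoretic input is the uniqueness of this canonical continued-fraction expansion of a rational number. Consequently, if two rational tangles have the same fraction $p/q$, their canonical forms carry identical entry sequences, hence are the \emph{same} diagram; since each was reached from its original tangle by a sequence of isotopy moves, the two original tangles are isotopic. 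Surjectivity onto $\mathbb{Q}\cup\{\infty\}$ is immediate by building a tangle directly from the continued fraction, so the map is in fact a bijection.

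The main obstacle is the completeness direction, and specifically the \emph{dictionary} between continued-fraction identities and explicit tangle isotopies. What must be established is that any two continued-fraction expansions of one rational number are connected by the standard elementary continued-fraction transformations, and that \emph{each} such transformation is realized by a twist or flype (so that no relation forces a move outside the generating set of \autoref{def:twist_move}–\autoref{def:flype_move}). Producing the canonical form algorithmically and checking that every continued-fraction identity lifts to a tangle isotopy is where the genuine work lies; the invariance direction, once the generating moves are fixed, is a routine if tedious case check. An alternative, more topological route would pass to the double branched cover and identify rational tangles with $2$-bridge links, deducing the statement from Schubert's classification, but the combinatorial argument above stays within the calculus developed here.
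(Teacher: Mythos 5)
First, be aware that the paper contains no proof of \autoref{thm:fraction}: it is quoted as Conway's classical theorem and the reader is referred to \cite{montesinos1984revetements}, \cite{burde1986knots} and \cite{goldman1997rational}. So there is no in-paper argument to match; your text has to stand on its own, and as written it is a plan rather than a proof. Your invariance direction rests entirely on the statement that twist, flype and the second elementary move generate isotopy of rational tangles. That statement is indeed quoted in this paper from \cite{kauffman2004classification}, and granting it, the check that $\mathrm{Frac}$ is preserved by each generator is the same routine computation the paper carries out for the generalized fraction in \autoref{thm:fraction_flype}. But as a freestanding proof of Conway's theorem this is close to circular: in \cite{kauffman2004classification} the generation statement is obtained essentially together with the fraction classification (via the canonical alternating form and a flyping argument), not as an independent elementary input. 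The published proofs avoid this by first defining the fraction in a manifestly isotopy-invariant way -- integral coloring/conductance in \cite{goldman1997rational,kauffman2004classification}, or the slope in the double branched cover in \cite{montesinos1984revetements,burde1986knots} -- and only afterwards identifying it with the continued fraction of a standard form.

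The genuine gap is in the completeness direction, and you name it yourself without closing it. The claim that an arbitrary standard form $kl\cdots mn$ with entries of mixed signs can be carried by explicit twists and flypes to the unique all-positive (Euclidean-algorithm) form of \autoref{def:canonical_form}, in such a way that the tangle moves realize exactly the arithmetic identities connecting the two continued-fraction expansions of the same rational number, is precisely the content of \autoref{thm:canonical_form} and \autoref{thm:canonical_unique} and is the heart of the Kauffman--Lambropoulou proof. Asserting that ``each such transformation is realized by a twist or flype'' without exhibiting the dictionary -- in particular the sign-adjustment identities generalizing \eqref{eq:ambiguity_1} and their tangle realizations -- leaves the theorem unproved; everything else in your outline (removing interior zeroes, absorbing units, uniqueness of the canonical expansion, surjectivity onto $\mathbb{Q}\cup\{\infty\}$) is the easy part. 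If you do not want to carry out that combinatorial dictionary, the honest route is the one you mention in passing: pass to the double branched cover, identify the rational tangle with a solid torus and the fraction with a boundary slope, and quote the classification of lens spaces / Schubert's theorem, which is exactly the argument in the references the paper cites.
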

The proofs can be found in \cite{montesinos1984revetements}, \cite{burde1986knots} p.196 and \cite{goldman1997rational}.

Note that all properties of rational tangles also apply to integral tangles, since integral tangles are a subset of rational tangles with an integer fraction.

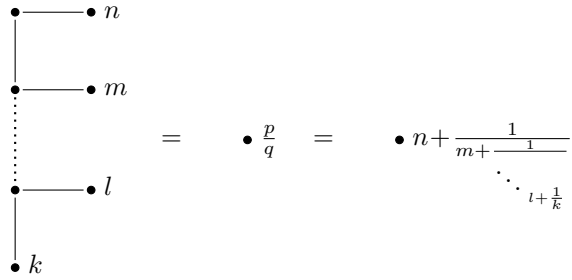
\begin{figure}
    \centering
    \begin{tikzpicture}
        \node[tree={mid}, row sep={.45cm}] (tree2) at (0,0)
            {
                \node (T2) {}; & \node (R2) {};\\
                \node[phantom] {}; \\
                \node (L2) {}; & \node (LR2) {};\\
                \node[phantom] (mid) {}; && \node[phantom] {$=$}; & \node (pq) {}; & \node[phantom] {$=$}; & \node (frac) {};\\
                \node (LL2) {}; & \node (LLR2) {};\\
                \node[phantom] {}; \\
                \node (LLL2) {}; \\
            };
            \node[labelright={R2}] (W2) {$n$};
            \node[labelright={LR2}] {$m$};
            \node[labelright={LLR2}] {$l$};
            \node[labelright={LLL2}] {$k$};
            \node[labelright={pq}] {$\frac{p}{q}$};
            \node[labelright={frac}] {$n\!+\!\frac{1}{m+ \frac{1}{\ddots_{\;l + \frac{1}{k}} }}$};
            \draw 
                (T2) edge (R2)
                (T2) edge (L2)
                (L2) edge[dottededge] (LL2)
                (L2) edge (LR2)
                (LL2) edge (LLL2)
                (LL2) edge (LLR2);
    \end{tikzpicture}
    \caption{Contraction of a rational tangle by replacing it with fraction of the rational tangle, represented by a move on a tree. On this figure dotted edge means 1 or more edges separated by nodes which right-children are integral tangles.}
    \label{fig:rational_contraction}
\end{figure}   

\begin{definition}[Canonical form]\label{def:canonical_form}
    Rational tangle is in \emph{canonical form} when all of the following conditions hold:
    \begin{itemize}
        \item it is in standard form,
        \item all its terms are all positive or all negative except the last one, which can be 0,
        \item its first term is $1$ or $\overline{1}$ only if it is the only term,
        \item its first term is 0 only if it is tangle 0 or 00.
    \end{itemize}
\end{definition}
\begin{theorem}[Kauffman \cite{kauffman2004classification}]\label{thm:canonical_form}
    Every rational tangle can be brought to a canonical form by isotopy.
\end{theorem}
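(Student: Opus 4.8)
The plan is to reduce this topological statement to a normal-form result about continued fractions, using that the fraction is a complete invariant. By \autoref{thm:fraction}, a rational tangle is determined up to isotopy by its fraction $p/q \in \mathbb{Q}\cup\{\infty\}$, and conversely every integer sequence $(k,l,\dots,m,n)$ realizes a standard-form tangle $kl\cdots mn$ whose fraction is computed by \autoref{def:fraction_rational}. Hence it suffices to prove the purely arithmetic claim: every $p/q$ admits a continued-fraction expansion $n+\frac{1}{m+\frac{1}{\ddots+\frac1k}}$ whose terms $(k,\dots,m,n)$ satisfy the four conditions of \autoref{def:canonical_form}. The canonical tangle is then the standard-form tangle built from that sequence, isotopic to the original by \autoref{thm:fraction}.

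First I would dispose of the degenerate cases directly: $p/q=0$ is realized by the tangle $0$ and $p/q=\infty=1/0$ by $00$, which is exactly why the last condition permits a leading $0$ only for the tangles $0$ or $00$. For $p/q\notin\{0,\infty\}$ I would invoke the mirror symmetry $\mu$, which by \eqref{eq:refl} sends a tangle to $\overline{A}=-A$ and therefore negates the fraction, so that producing the all-negative canonical form is just the mirror of the all-positive one; this lets me assume $p/q>0$. For such $p/q$, the regular continued-fraction expansion delivered by the Euclidean algorithm has all partial quotients positive, giving a sequence in which $k,l,\dots,m$ are all positive and the outermost (integer-part) term $n$ is a nonnegative integer, with $n=0$ precisely when $0<p/q<1$. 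This already produces sign-uniformity together with the permitted terminal $0$.

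The remaining work is to enforce the two structural conditions without destroying sign-uniformity. A terminal innermost $k=1$ is absorbed using the identity $\frac{1}{l+\frac{1}{1}}=\frac{1}{l+1}$, which is the arithmetic shadow of the tangle relation $1=10$ and $1n=n+1$ recorded in \eqref{eq:ambiguity_1}; it deletes the offending $1$ and increments its neighbour, and iterating removes every innermost $\pm1$ except when the sequence collapses to the single term $1$ or $\overline1$, exactly the permitted exception. An interior $0$ is removed by the contraction $[\dots,a,0,b,\dots]=[\dots,a+b,\dots]$, so that the only surviving $0$ is the allowed terminal one. Each of these reductions is an isotopy of the underlying tangle (via the twist move and $1=10$) that preserves the fraction, so the normalized sequence names a tangle isotopic to the given one.

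The step I expect to be the main obstacle is the last one: organizing these normalizations so that all four conditions hold simultaneously, since the moves interact, for instance absorbing a terminal $1$ can expose a fresh innermost $\pm1$, and a contraction can perturb the sign pattern near the ends. I would control this by induction on the number of terms (equivalently on $|p|+|q|$ via the Euclidean algorithm), verifying at each reduction that sign-uniformity, the \qq{$\pm1$ only if unique} condition, and the \qq{$0$ only terminal, or degenerate} condition are all preserved. The topological content is carried entirely by \autoref{thm:fraction}; everything else is the translation of the classical continued-fraction normal form into the tangle moves, so at heart the theorem is an arithmetic normalization dressed in tangle language.
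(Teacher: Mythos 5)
The paper does not prove this statement itself; it cites Kauffman--Lambropoulou, whose argument is a direct isotopy one: starting from an arbitrary product of integral tangles, flypes and twists (essentially the identities $1=10$, $n0m=n\!+\!m$, and the transfer of negative twists across a subtangle) are used to push the sequence into the sign-uniform Euclidean form. Your route is genuinely different: you treat the theorem as pure arithmetic, computing $\mathrm{Frac}(T)=p/q$, producing the all-positive (or mirrored all-negative) continued-fraction expansion by Euclid's algorithm, and then invoking \autoref{thm:fraction} to conclude that the resulting standard-form tangle is isotopic to $T$. Relative to the results as stated in this paper this is valid and quite clean, and it matches the paper's own remark that the canonical form is the output of Euclid's algorithm; your normalization steps (absorbing an innermost $\pm1$ via $1n=n\!+\!1$, contracting interior zeros) are also correctly matched to the tangle relations \eqref{eq:ambiguity_1} and \eqref{eq:bracket_juggling}. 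The worry about moves interacting is overstated: Euclid's algorithm already yields a sign-uniform, zero-free sequence, so only the single innermost-$1$ absorption is ever needed.

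The one caveat you should flag is a latent circularity. You use the hard direction of \autoref{thm:fraction} (\emph{same fraction $\Rightarrow$ isotopic}), but the standard proofs of that direction --- including Kauffman's --- proceed by first establishing precisely \autoref{thm:canonical_form} and \autoref{thm:canonical_unique} and then observing that the fraction determines the canonical form. So your argument cannot serve as a self-contained replacement for Kauffman's proof; it only works because the paper imports \autoref{thm:fraction} as a black box. If you instead promoted your third and fourth paragraphs --- where the reductions are realized as actual tangle isotopies (twists, $1=10$, contraction of zeros) rather than as equalities of fractions --- to carry the entire argument, including the sign-uniformization step, you would recover the direct proof and avoid any reliance on \autoref{thm:fraction}.
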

\begin{theorem}[Kauffman \cite{kauffman2004classification}]\label{thm:canonical_unique}
    Every rational tangle has exactly one unique canonical form.
\end{theorem}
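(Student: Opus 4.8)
The plan is to derive uniqueness from the fact that the fraction is a perfect invariant (\autoref{thm:fraction}). Since \autoref{thm:canonical_form} already guarantees existence of a canonical form, it suffices to prove that $\mathrm{Frac}$ is \emph{injective} on canonical forms: if two canonical standard forms have the same fraction $p/q$, then they are identical as sequences of integral tangles. Indeed, any two canonical forms of a single isotopy class have equal fractions by \autoref{thm:fraction}, so injectivity on canonical forms immediately yields uniqueness. This converts the topological statement into a purely arithmetic statement about recovering a continued-fraction expansion from its value, subject to the constraints of \autoref{def:canonical_form}.

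First I would dispose of the degenerate cases straight from \autoref{def:canonical_form}: the only canonical forms with first term $0$ are $0$ and $00$, with fractions $0$ and $\infty = 1/0$, and a single-term form $n$ has fraction $n$; these exhaust the forms with integer or infinite fraction. All remaining canonical forms have at least two terms, with all non-final terms of a common sign. Using the reflection symmetry of \autoref{eq:refl}, namely $\mu(kl\cdots mn) = \overline{k}\,\overline{l}\cdots\overline{m}\,\overline{n}$, which sends all-positive forms to all-negative ones and satisfies $\mathrm{Frac}(\mu A) = -\mathrm{Frac}(A)$, it is then enough to treat the all-positive case.

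So assume $kl\cdots mn$ is canonical with $k,\dots,m \ge 1$, $n \ge 0$, and at least two terms. \autoref{def:fraction_rational} gives the recursion $\mathrm{Frac}(kl\cdots mn) = n + 1/y$ with $y = \mathrm{Frac}(kl\cdots m)$. The key estimate is $y > 1$: the first term satisfies $k \ge 2$ (a first term $\pm 1$ is allowed only for a single-term tangle), so if $kl\cdots m$ is a single term then $y = k \ge 2$, and otherwise $y = m + 1/(\text{tail}) > m \ge 1$ since all partial quotients are positive. Hence $1/y \in (0,1)$, which forces $n = \lfloor p/q\rfloor$ and $1/y = \{p/q\}$, recovering $n$ uniquely. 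Then $y = 1/\{p/q\}$ is the fraction of the strictly shorter form $kl\cdots m$, which is again all-positive with first term $\ge 2$ and therefore canonical, so induction on the number of terms recovers the whole sequence and establishes injectivity.

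The step I expect to be the main obstacle is making the boundary bookkeeping airtight, that is, verifying that each clause of \autoref{def:canonical_form} eliminates exactly one source of ambiguity: the sign clause separates $p/q > 0$ from $p/q < 0$; permitting a final term $0$ supplies the fractions in $(0,1)$, where $\lfloor p/q\rfloor = 0$; and the restriction that the first term be $\pm 1$ only when alone kills the classical ambiguity $[\dots,a,1]=[\dots,a+1]$ by forcing $|k|\ge 2$, which is precisely what guarantees $1/y \in (0,1)$ and hence the recoverability of $n$. One must also check that an integer fraction forces a single-term form, since a multi-term positive form gives $p/q = n + 1/y \notin \mathbb{Z}$, so that single-term and multi-term canonical forms never collide.
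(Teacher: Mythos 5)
Your argument is correct. Note, however, that the paper does not actually prove this statement: it is quoted from Kauffman--Lambropoulou \cite{kauffman2004classification}, and the only justification offered in the text is the remark that the canonical form coincides with the output of Euclid's algorithm applied to the fraction (cf.\ \autoref{fig:rational_decomposition}). Your proof is exactly the fleshed-out version of that remark --- existence from \autoref{thm:canonical_form}, perfect invariance of the fraction from \autoref{thm:fraction}, and injectivity of $\mathrm{Frac}$ on canonical forms via the standard uniqueness of continued-fraction expansions with first partial quotient of absolute value at least $2$ --- and the boundary bookkeeping you flag (sign separation via \autoref{thm:fraction_mirror}, the final term $0$ covering fractions in $(0,1)$, and integer fractions forcing single-term forms) all checks out against \autoref{def:canonical_form}.
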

Note that the canonical form, defined as in \autoref{def:canonical_form}, matches the output of Euclid's algorithm (cf.\ decomposition on \autoref{fig:rational_decomposition}). Note that Kauffman used a slightly different 
definition of the canonical form but followed the same principles. The difference is that we used the relation $1 = 10$ to keep the minimal number of integral tangles, while Kauffman preferred to keep their number odd.

\begin{figure}
    \centering
    \begin{tikzpicture}
        \node[tree={T1}] (tree1) at (0,0)
        {
            \node (T1) {};\\
        };
        \node[labelright={T1}] {$\frac{p}{q}$};
        \node at (.8,0) {$=$};
        \node[tree={mid2}, row sep={.45cm}] (tree2) at (1.4,0)
        {
            \node (T2) {}; & \node (R2) {};\\
            \node[phantom] (mid2) {};\\
            \node (L2) {};\\
        };
        \node[labelright={R2}] {$\left\lfloor \frac{p}{q} \right\rfloor$};
        \node[labelright={L2}] {$\frac{q}{p \bmod{q}}$};
        \draw 
            (T2) edge (R2)
            (T2) edge (L2);
        \node at (3.6,0) {$=$};
        \node[tree={L3}, row sep={.45cm}] (tree3) at (4.2,0)
        {
            \node (T3) {}; & \node (R3) {};\\
            \node[phantom] {};\\
            \node (L3) {}; & \node (LR3) {};\\
            \node[phantom] {};\\
            \node (LL3) {};\\
        };
        \node[labelright={R3}] {$\left\lfloor \frac{p}{q} \right\rfloor$};
        \node[labelright={LR3}] {$\left\lfloor \frac{q}{p \bmod{q}} \right\rfloor$};
        \node[labelright={LL3}] {$\frac{p \bmod{q}}{q \bmod{(p \bmod{q})}}$};
        \draw 
            (T3) edge (R3)
            (T3) edge (L3)
            (L3) edge (LR3)
            (L3) edge (LL3);
        \node at (7.4,0) {$=$};
        \node[tree={mid4}, row sep={.45cm}] (tree4) at (8,0)
        {
            \node (T4) {}; & \node (R4) {};\\
            \node[phantom] {};\\
            \node (L4) {}; & \node (LR4) {};\\
            \node[phantom] (mid4) {};\\
            \node (LL4) {}; & \node (LLR4) {};\\
            \node[phantom] {};\\
            \node (LLL4) {};\\
        };
        \node[labelright={R4}] {$\left\lfloor \frac{p}{q} \right\rfloor$};
        \node[labelright={LR4}] {$\left\lfloor \frac{q}{p \bmod{q}} \right\rfloor$};
        \node[labelright={LLR4}] 
            {$\left\lfloor \frac{p \bmod{q}}{q \bmod{(p \bmod{q})}} \right\rfloor$};
        \node[labelright={LLL4}] 
            {$\frac{q \bmod{(p \bmod{q})}}{(p \bmod{q}) \bmod{(q \bmod{(p \bmod{q})})}}$};
        \draw 
            (T4) edge (R4)
            (T4) edge (L4)
            (L4) edge (LR4)
            (L4) edge (LL4)
            (LL4) edge (LLR4)
            (LL4) edge (LLL4);
    \end{tikzpicture}
    \caption{Decomposition of a positive rational tangle represented with a tangle tree. Left-leaf can be recurrently decomposed until its fraction is integral. Note that the decomposition is equivalent to Euclid's algorithm of finding $\gcd(p,q)$ -- consecutive generated integral tangles (right-children) are remainders of the operation.}
    \label{fig:rational_decomposition}
\end{figure}
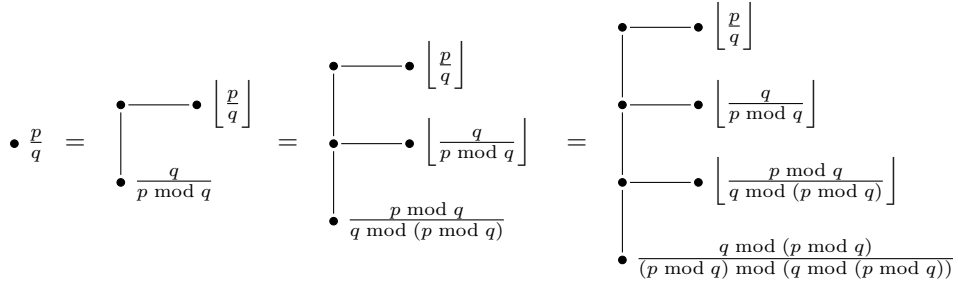


\subsection{Fraction of an algebraic tangle}\label{sec:fraction_algebraic}

{Here we generalize the fraction of rational tangles to the fraction of non-rational algebraic tangles, although, the generalized fraction loses the property of being a perfect invariant.}

\begin{definition}\label{def:fraction_algebraic}
The fraction $p/q$, where $p,q \in \mathbb{Z}$ (including $1/0=\infty$), of a tangle $A=LR$, $A$,$L$,$R\in \algebraic$ is defined as follows:
    \begin{align*}
         \mathrm{Frac}(A) &= \mathrm{Frac}(LR) = \mathrm{Frac}(R)\!+\!\frac{1}{\mathrm{Frac}(L)} = p/q.
    \end{align*}
\end{definition}
\noindent Which, for chain-multiplication of tangles (\autoref{eq:tree_decomposition_mul}), coincides with continued fraction:
\begin{equation}
     \mathrm{Frac}(A) = \mathrm{Frac}\left(\prod_{i=1}^N a_i\right) = 
    \mathrm{Frac}(a_N)\!+\!\frac{1}{\mathrm{Frac}(a_{N-1})+ \frac{1}{\ddots_{\;\mathrm{Frac}(a_2) + \frac{1}{\mathrm{Frac}(a_1)}} }} = p/q,
\end{equation}
and is linear with respect to addition (\autoref{eq:tree_decomposition_sum}):
\begin{equation}
     \mathrm{Frac}(A) = \mathrm{Frac}\left(\sum_{i=1}^N a_i \right) = 
     \sum_{i=1}^N \mathrm{Frac}(a_i) = p/q.
\end{equation}

\begin{lemma}\label{thm:fraction_rotations}
    The fraction of an algebraic tangle is invariant to rotations around $x,y,z$ axes.
\end{lemma}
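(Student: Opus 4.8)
The plan is to prove the two statements $\mathrm{Frac}(A_x) = \mathrm{Frac}(A)$ and $\mathrm{Frac}(A_y) = \mathrm{Frac}(A)$ \emph{simultaneously} by structural induction on a fixed finite decomposition of $A$, and then to deduce the $z$-case from them. For the base case I would take $A$ to be a basic tangle: the integral tangles $1$ and $\overline{1}$ are fixed by all three rotations by \autoref{eq:int_symmetry}, and one checks directly that $0$ and $\infty$ are likewise fixed (an H-type tangle stays H-type and a V-type tangle stays V-type under a $\pi$-rotation about $x$ or $y$). In every base case the rotated tangle is therefore literally identical to the original, so its fraction is trivially unchanged.

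For the inductive step I would write $A = LR$ as in \autoref{def:fraction_algebraic}, so that $\mathrm{Frac}(A) = \mathrm{Frac}(R) + 1/\mathrm{Frac}(L)$ with $L$ and $R$ strictly simpler. The $x$-case is the easy one: the multiplication rule $[LR]_x = L_y R_x$ from \autoref{eq:rotations_multiplication} together with the definition of the fraction gives $\mathrm{Frac}(A_x) = \mathrm{Frac}(R_x) + 1/\mathrm{Frac}(L_y)$, and the inductive hypotheses ($x$-invariance of $R$ and $y$-invariance of $L$) collapse this to $\mathrm{Frac}(R) + 1/\mathrm{Frac}(L) = \mathrm{Frac}(A)$.

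The $y$-case is where the real work lies and is the step I expect to be the main obstacle, because the rule $[LR]_y = (R_y0)(L_x0)$ both reverses the order of the factors and inserts the reflections $\eta$. The key auxiliary identity is that appending a $0$ on the right inverts the fraction, $\mathrm{Frac}(C0) = \mathrm{Frac}(0) + 1/\mathrm{Frac}(C) = 1/\mathrm{Frac}(C)$, with the conventions $1/0 = \infty$ and $1/\infty = 0$. Feeding this into the fraction of the product $(R_y0)(L_x0)$ yields $\mathrm{Frac}(A_y) = \mathrm{Frac}(L_x0) + 1/\mathrm{Frac}(R_y0) = 1/\mathrm{Frac}(L_x) + \mathrm{Frac}(R_y)$, which the inductive hypotheses turn into $\mathrm{Frac}(R) + 1/\mathrm{Frac}(L) = \mathrm{Frac}(A)$; the reversal of factors is harmless precisely because addition of fractions is commutative. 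The only things to watch are the arithmetic in $\mathbb{Q} \cup \{\infty\}$ around the values $0$ and $\infty$, and keeping the induction well-founded by fixing one finite decomposition of $A$ (so as not to exploit the infinite expansion $1 = 10$).

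Finally, once $x$- and $y$-invariance hold for every algebraic tangle, the $z$-case follows formally: a $\pi$-rotation about $z$ is the composite of the $\pi$-rotations about $x$ and $y$, so $A_z = (A_x)_y$ and hence $\mathrm{Frac}(A_z) = \mathrm{Frac}((A_x)_y) = \mathrm{Frac}(A_x) = \mathrm{Frac}(A)$, applying the already-established $y$- and then $x$-invariance.
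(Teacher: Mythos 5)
Your proposal is correct and follows essentially the same route as the paper: the paper likewise applies the rotation rules $[LR]_x=L_yR_x$ and $[LR]_y=(R_y0)(L_x0)$ to get $\mathrm{Frac}(A_x)=\mathrm{Frac}(R_x)+1/\mathrm{Frac}(L_y)$ and $\mathrm{Frac}(A_y)=\mathrm{Frac}(R_y)+1/\mathrm{Frac}(L_x)$, and then argues by repeated expansion down to rotation-invariant rational/integral leaves. You merely make the simultaneous $x$/$y$ induction, the base case, and the $z=\rho_x\rho_y$ step explicit, which the paper leaves informal.
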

\begin{proof}
    For a rotations of tangle $A=LR$ (\autoref{eq:rotations_multiplication}) we obtain:
    \begin{align*}
        \mathrm{Frac}(A_x) &= \mathrm{Frac}(L_yR_x) = \mathrm{Frac}(R_x)\!+\!\frac{1}{\mathrm{Frac}(L_y)};\\
        \mathrm{Frac}(A_y) &= \mathrm{Frac}\big((R_y0)(L_x0)\big) = \frac{1}{\mathrm{Frac}(L_x)} + \frac{1}{1/\mathrm{Frac}(R_y)} = \mathrm{Frac}(R_y) + \frac{1}{\mathrm{Frac}(L_x)}.
    \end{align*}
    Rotating a tangle around $x$ or $y$ axis does not change the expansion form of the fraction. After repeatedly expanding the tangle into rational or integral components, one can see that the structure of the rotated and non-rotated tangles remains the same; only the rotation-indices are different. Since rational tangles are invariant under rotation, the indices can be ignored.
\end{proof}

\begin{lemma}\label{thm:fraction_mirror}
    Switching all signs of an algebraic tangle changes the sign of its fraction: $\mathrm{Frac}(\overline{A}) = -\mathrm{Frac}(A)$, $A \in \algebraic$.
\end{lemma}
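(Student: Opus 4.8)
The plan is to prove the identity by structural induction on the expression tree of $A$, whose leaves are basic tangles $\{0,\infty,1,\overline 1\}$ and whose internal nodes are products. The only geometric input I need is that the mirror $\mu$ distributes over the product, $\overline{LR}=\overline L\,\overline R$; everything after that is a formal consequence of the recursive \autoref{def:fraction_algebraic}. To see the distributivity, recall that $\mu$ is the reflection $(x,y,z)\mapsto(x,y,-z)$ through the plane $z=0$ that carries the diagram: it fixes the four Conway-sphere boundary points and the whole $z=0$ plane pointwise, and merely exchanges the over- and under-strand at each crossing. Since forming $LR$ only prescribes the left--right placement of the two factors and the gluing of their arcs along the (fixed) boundary, reflecting $LR$ coincides with reflecting each factor and re-assembling. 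This gives $\overline{LR}=\overline L\,\overline R$, a statement checked graphically in the same spirit as the rotation rules \eqref{eq:rotations_multiplication}.

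For the base case, the leaves $0$ and $\infty$ have no crossings, so $\overline 0=0$ and $\overline\infty=\infty$, and their fractions $0$ and $1/0$ satisfy $-0=0$ and $-1/0=1/0$ projectively; for the crossing leaves one has $\mathrm{Frac}(\overline 1)=-1=-\mathrm{Frac}(1)$ and symmetrically for $\overline 1$. In every case $\mathrm{Frac}(\overline A)=-\mathrm{Frac}(A)$. For the inductive step I write $A=LR$ and combine the distributivity with \autoref{def:fraction_algebraic}:
\[
\mathrm{Frac}(\overline A)=\mathrm{Frac}(\overline L\,\overline R)=\mathrm{Frac}(\overline R)+\frac{1}{\mathrm{Frac}(\overline L)}.
\]
Applying the induction hypothesis $\mathrm{Frac}(\overline R)=-\mathrm{Frac}(R)$ and $\mathrm{Frac}(\overline L)=-\mathrm{Frac}(L)$, the right-hand side becomes $-\bigl(\mathrm{Frac}(R)+1/\mathrm{Frac}(L)\bigr)=-\mathrm{Frac}(A)$, closing the induction. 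Equivalently, in the chain-multiplication form \eqref{eq:tree_decomposition_mul}, this is just the sign-reversal identity for continued fractions, $f(-x_1,\dots,-x_N)=-f(x_1,\dots,x_N)$, obtained from the same one-line recursion.

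The only genuine obstacle is the first step: making the geometric claim $\overline{LR}=\overline L\,\overline R$ precise (and, if one prefers to phrase the recursion additively, the analogous $\overline{A+B}=\overline A+\overline B$). Once that is in hand the lemma is immediate, and it also retroactively justifies the notation $\overline A\equiv-A$ introduced in \eqref{eq:refl}, since mirroring negates the fraction exactly as the minus sign suggests.
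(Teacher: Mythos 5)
Your proof is correct and follows essentially the same route as the paper's, which simply says the claim reduces to the rational case via the recursive structure of the fraction (as in Lemma~\ref{thm:fraction_rotations}); you make that reduction explicit as a structural induction, with the key step $\overline{LR}=\overline{L}\,\overline{R}$ (valid because $\mu$ commutes with $\eta$ and distributes over the sum) spelled out rather than left implicit. No substantive difference in method — yours is just the fleshed-out version of the paper's one-line argument.
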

\begin{proof}
    By similar argument as in \autoref{thm:fraction_rotations}. {Since the statement is holds for rational tangles, it also holds for algebraic tangles.}
\end{proof}

\begin{lemma}\label{thm:fraction_flype}
    The fraction of an algebraic tangle is invariant to the elementary moves, flypes, twists, and ring moves.
\end{lemma}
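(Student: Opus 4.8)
The plan is to exploit the three structural facts already established about $\mathrm{Frac}$: it is additive over tangle sum, it obeys the recursive continued-fraction rule $\mathrm{Frac}(LR) = \mathrm{Frac}(R) + 1/\mathrm{Frac}(L)$ of \autoref{def:fraction_algebraic}, and it is invariant under rotation (\autoref{thm:fraction_rotations}) and negated by mirroring (\autoref{thm:fraction_mirror}). Since every move in the statement is an explicit algebraic identity between tangle expressions, it suffices to evaluate $\mathrm{Frac}$ on both sides of each identity and check equality, working throughout in the extended rationals $\mathbb{Q} \cup \{\infty\}$ with the conventions $1/0 = \infty$, $1/\infty = 0$, and $q + \infty = \infty$. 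The base values $\mathrm{Frac}(0) = 0$, $\mathrm{Frac}(1) = 1$, $\mathrm{Frac}(\overline{1}) = -1$, and $\mathrm{Frac}(\infty) = \mathrm{Frac}(00) = 1/0 = \infty$ follow directly from the definition.

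The elementary, twist, and ring moves are immediate. For elementary move I, additivity gives $\mathrm{Frac}(\infty + 1) = \infty + 1 = \infty = \mathrm{Frac}(\infty)$; for elementary move II, $\mathrm{Frac}(1 + \overline{1}) = 1 - 1 = 0 = \mathrm{Frac}(0)$. For the twist $1 + A_x + \overline{1} \twist A$, additivity together with rotation invariance yields $\mathrm{Frac}(1 + A_x + \overline{1}) = 1 + \mathrm{Frac}(A_x) - 1 = \mathrm{Frac}(A)$. For the ring move, write $R = (20 + \overline{2}0)0$ for the ring; additivity gives $\mathrm{Frac}(20 + \overline{2}0) = 1/2 - 1/2 = 0$ and hence $\mathrm{Frac}(R) = 0 + 1/0 = \infty$, so that $\mathrm{Frac}(R + A) = \infty + \mathrm{Frac}(A) = \mathrm{Frac}(A) + \infty = \mathrm{Frac}(A + R)$ by commutativity of addition in $\mathbb{Q} \cup \{\infty\}$.

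The flype is the substantive case, and it is where I expect the only genuine work. Setting $f = \mathrm{Frac}(A)$ and using the mirror rule $\mathrm{Frac}(\overline{X}) = -\mathrm{Frac}(X)$ together with the recursive rule, the form $\overline{A1}1\overline{1} = ((\overline{A1}\cdot 1)\cdot \overline{1})$ telescopes as
\begin{align*}
    \mathrm{Frac}(\overline{A1}) &= -\Bigl(1 + \tfrac{1}{f}\Bigr) = -\tfrac{f+1}{f}, \\
    \mathrm{Frac}(\overline{A1}\,1) &= 1 - \tfrac{f}{f+1} = \tfrac{1}{f+1}, \\
    \mathrm{Frac}(\overline{A1}\,1\,\overline{1}) &= -1 + (f+1) = f,
\end{align*}
so the fraction is unchanged. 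The remaining presentations of the flype in \autoref{def:flype_move}, namely $\overline{A}01\overline{1}10$, $\overline{A01}1\overline{1}0$, and $\overline{A}1\overline{1}1$, are handled by the identical telescoping and each also evaluates to $f$; in particular this confirms that the two forms related by the Reidemeister III move carry the same fraction. The main obstacle is purely the bookkeeping of these nested continued-fraction inversions, but since each expression is a short left-associated product, the cancellations are clean and the computation terminates in a few steps.
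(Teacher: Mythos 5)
Your proposal is correct and follows essentially the same route as the paper: both verify each move by evaluating $\mathrm{Frac}$ on the two sides of the algebraic identity using the recursive rule of \autoref{def:fraction_algebraic} together with rotation invariance and mirror negation, and both treat the flype as the only computation requiring real bookkeeping (your telescoped values for $\overline{A1}1\overline{1}$ agree with the paper's nested continued fraction). The only cosmetic difference is that you phrase the twist and ring checks additively while the paper writes them multiplicatively, and you, like the paper, dispatch the remaining flype variants as analogous.
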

\begin{proof}
{The proofs for elementary are trivial. We show proofs for a flype $A\flype \overline{A1}1\overline{1}$, a twist $A\twist \overline{1}0A_x01$ a ring move, as all other cases follow analogously:}
    \begin{align*}
        \mathrm{Frac}(\overline{A1}1\overline{1}) \;&
        = -1 + \frac{1}{1 + \frac{1}{-1+ \frac{1}{\mathrm{Frac}(\overline{A})}}} = -\mathrm{Frac}(\overline{A}) = \mathrm{Frac}(A).\\
        \mathrm{Frac}(10A_x0\overline{1}) \;&
        = -1 + \frac{1}{0+\frac{1}{\mathrm{Frac}(A_x) + \frac{1}{0+\frac{1}{1}}}} = 
        \mathrm{Frac}(A_x) = \mathrm{Frac}(A).\\
         \mathrm{Frac}(A0(2(\overline{2}0)0)) \;&
        = \frac{1}{-\frac{1}{2}+\frac{1}{2}} + \frac{1}{0 + \frac{1}{\mathrm{Frac}(A)}} = \mathrm{Frac}(A) + \frac{1}{-\frac{1}{2} + \frac{1}{2}} = \mathrm{Frac}(2(\overline{2}0)A) 
    \end{align*}
\end{proof}

Therefore, the generalized fraction is invariant to all isotopy-preserving moves which we defined in \autoref{sec:isotopic_moves}, but it is not a perfect invariant, e.g. $2(20) \neq 1$ but $\text{Fraction}(2(20)) = \text{Fraction}(1)$.


\subsection{Canonical representation}
Here we define the canonical representation of algebraic tangles and show how to obtain it. The goal is to obtain a form in which twist, flype, ring and elementary moves are ``fixed'' in a specific position, and provide an algorithm of obtaining such a form.

{
While the canonical form of a rational tangle is alternating (see \autoref{def:canonical_form} and \autoref{thm:canonical_form}), making it an ideal candidate, this property does not extend to all tangles. For instance, the ring $2(\overline{2}0)$ lacks any alternating form. Furthermore, prime non-rational tangles that do admit alternating forms have at least four such representations, with the number increasing rapidly as crossings grow. Thus, a unique representation must rely on other rules.}

\begin{definition}[Canonical representation of prime algebraic tangles]
    A prime algebraic tangle $A$ is in canonical representation when its tree $T(A)$ 
    obeys following restrictions:
    \begin{itemize}
        \item on each leaf:
            \begin{itemize}
                \item all right-leaves are integral tangles;
                \item all left-leaves are positive rational tangles (represented by fractions) with fractions larger than 1;
            \end{itemize}
        \item on each node, which is left-child:
            \begin{itemize}
                \item it cannot be a parent of two leaves;
                \item if its right-child is a leaf, then this leaf must be positive;
                \item if its right-child is a $1$ tangle, then the right-child of its left-child must be a leaf.
            \end{itemize}
        \item on each node with a ring ($2(\overline{2}0)$, or in canonical form, $2(2\overline{1})$) as a left-child:
            \begin{itemize}
                \item its right-child must be a leaf, or a parent of a ring as a left-child as well.
            \end{itemize}
    \end{itemize}
    Forbidden subtress which follow from these rules are presented in \autoref{fig:restricted}.
\end{definition}
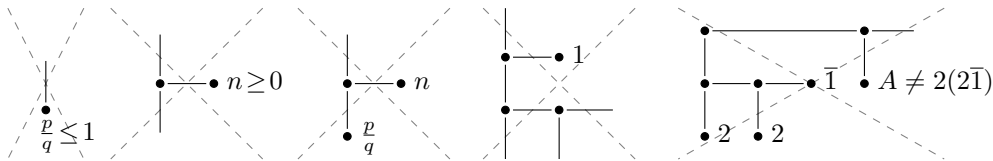
\begin{figure}[ht!]
    \begin{tikzpicture}[baseline=(mid)]
        \coordinate (mid) at (0,0);
        \node[tree_mid={T}] (tree6) at (0,.35)
        {
            \node[phantom] (T) {};\\
            \node (L) {}; \\
        };
        \coordinate (NW) at (-.5cm, 1cm);
        \coordinate (NE) at (.5cm, 1cm);
        \coordinate (SW) at (-.5cm, -1cm);
        \coordinate (SE) at (.5cm, -1cm);
        \node[labelbelow={L}] {$\frac{p}{q}\!\leq\!1$};
        \draw 
            (T) edge (L)
            (NE) edge[dashed, opacity=.5] (SW)
            (NW) edge[dashed, opacity=.5] (SE);
    \end{tikzpicture}\;
    \begin{tikzpicture}[baseline=(mid)]
        \coordinate (mid) at (0,0);
        \node[tree_mid={L}] (tree2) at (-.35,0)
            {
                \node[phantom] (T) {};\\
                \node (L) {}; & \node (LR) {};\\
                \node[phantom] (LL) {};\\
            };
        \coordinate (NW) at (-1cm, 1cm);
        \coordinate (NE) at (1cm, 1cm);
        \coordinate (SW) at (-1cm, -1cm);
        \coordinate (SE) at (1cm, -1cm);
        \node[labelright={LR}] {$n\!\geq\!0$};
        \draw 
            (T) edge (L)
            (L) edge (LR)
            (L) edge (LL)
            (NE) edge[dashed, opacity=.5] (SW)
            (NW) edge[dashed, opacity=.5] (SE);
    \end{tikzpicture}\;
    \begin{tikzpicture}[baseline=(mid)]
        \coordinate (mid) at (0,0);
        \node[tree_mid={L}] (tree2) at (-.35,0)
            {
                \node[phantom] (T) {};\\
                \node (L) {}; & \node (LR) {}; & \node[phantom] {};\\
                \node (LL) {};\\
            };
        \coordinate (NW) at (-1cm, 1cm);
        \coordinate (NE) at (1cm, 1cm);
        \coordinate (SW) at (-1cm, -1cm);
        \coordinate (SE) at (1cm, -1cm);
        \node[labelright={LR}] {$n$};
        \node[labelright={LL}] {$\frac{p}{q}$};
        \draw 
            (T) edge (L)
            (L) edge (LR)
            (L) edge (LL)
            (NE) edge[dashed, opacity=.5] (SW)
            (NW) edge[dashed, opacity=.5] (SE);
    \end{tikzpicture}\;\,
    \begin{tikzpicture}[baseline=(mid)]
        \coordinate (mid) at (0,0);
        \node[tree_mid={LR}] (tree2) at (0,.35)
            {
                \node[phantom] (T) {}; \\
                \node (L) {};  & \node (LR) {}; \\
                \node (LL) {}; & \node (LLR) {}; & \node[phantom] (LLRR) {}; \\
                \node[phantom] (LLL) {}; & \node[phantom] (LLRL) {};\\
            };
        \coordinate (NW) at (-1cm, 1cm);
        \coordinate (NE) at (1cm, 1cm);
        \coordinate (SW) at (-1cm, -1cm);
        \coordinate (SE) at (1cm, -1cm);
        \node[labelright={LR}] {$1$};
        \draw 
            (T) edge (L)
            (L) edge (LR)
            (L) edge (LL)
            (LL) edge (LLR)
            (LL) edge (LLL)
            (LLR) edge (LLRL)
            (LLR) edge (LLRR)
            (NE) edge[dashed, opacity=.5] (SW)
            (NW) edge[dashed, opacity=.5] (SE);
    \end{tikzpicture}
    \begin{tikzpicture}[baseline=(L)]
        \node[tree_mid={LRR}] (tree2) at (0,0)
            {
                & \node (T) {}; &&&\node (R) {}; & \node[phantom] (RR) {};\\
                & \node (L) {};  & \node (LR) {}; & \node (LRR) {}; 
                & \node (RL) {}; \\
                & \node (LL) {}; & \node (LRL) {}; \\
            };
        \coordinate (NW) at (-1.75cm, 1cm);
        \coordinate (NE) at (1.75cm, 1cm);
        \coordinate (SW) at (-1.75cm, -1cm);
        \coordinate (SE) at (1.75cm, -1cm);
        \node[labelright={LL}] {$2$};
        \node[labelright={LRL}] {$2$};
        \node[labelright={LRR}] {$\overline{1}$};
        \node[labelright={RL}] {$A \neq 2(2\overline{1})$};
        \draw 
            (T) edge (L)
            (T) edge (R)
            (L) edge (LR)
            (L) edge (LL)
            (LR) edge (LRR)
            (LR) edge (LRL)
            (R) edge (RR)
            (R) edge (RL)
            (NE) edge[dashed, opacity=.5] (SW)
            (NW) edge[dashed, opacity=.5] (SE);
    \end{tikzpicture}
    \caption{Tangle trees in canonical representation must have left-leaves represented as rational tangles $\frac{p}{q}$, right-leaves represented as integral tangles $n$, and cannot have any subtangles are listed above.}
    \label{fig:restricted}
\end{figure}

\begin{theorem} \label{thm:main}
    The canonical form of a prime algebraic tangle is unique up to twist, flype, ring, and elementary moves
\end{theorem}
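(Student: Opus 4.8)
The plan is to read the statement as the conjunction of two claims and to prove each in turn: \emph{existence}, that every prime algebraic tangle can be brought into a canonical representation using only twist, flype, ring, and elementary moves, and \emph{uniqueness}, that no two distinct canonical trees are related by these moves. The cleanest framing is to orient the four moves toward the restrictions of the canonical representation, so that the canonical trees are exactly the normal forms of the resulting rewriting system. Uniqueness of a normal form per equivalence class then follows, by a Newman-type argument, from termination (which gives existence) together with confluence (which gives uniqueness). The tools I would rely on are the uniqueness of the rational canonical form (\autoref{thm:canonical_unique}) and its existence (\autoref{thm:canonical_form}), the invariance of the generalized fraction under all four moves (\autoref{thm:fraction_flype}) and under mirroring (\autoref{thm:fraction_mirror}), and the twist/bracket identities of \autoref{sec:isotopic_moves}, in particular \autoref{thm:rightmost_algebraic} and \autoref{thm:generalized_twist}.

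For existence I would give a constructive normalization acting on an arbitrary tree $T(A)$. First, using \autoref{thm:generalized_twist} and the elementary moves, sweep all integer tangles toward the right-leaves and merge adjacent integers, so that every right-leaf becomes integral; the decomposition $A^n=A^00n$ of \autoref{thm:rightmost_algebraic} makes this precise at each node. Next, apply \autoref{thm:canonical_form} to each maximal rational left-leaf, and use reflections, the ambiguity $1=10$ of \eqref{eq:ambiguity_1}, and flypes to force every left-leaf positive with fraction $>1$, tracking signs via \autoref{thm:fraction_mirror}. Finally, eliminate the configurations of \autoref{fig:restricted} one at a time: each forbidden subtree is matched by exactly one oriented identity (associativity, twist, flype, or ring move) that rewrites it. Termination is argued by a complexity measure — the crossing number refined by a well-ordering on tree shapes — that strictly decreases under each elimination, so the procedure halts at a canonical tree.

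For uniqueness I would establish local confluence and invoke termination from the previous paragraph. I would induct on crossing number: the rational case is precisely \autoref{thm:canonical_unique}, and for the inductive step I suppose two canonical trees $T_1,T_2$ are connected by the moves. Since the moves are isotopies, $T_1$ and $T_2$ present the same tangle and hence share the same fraction by \autoref{thm:fraction_flype}; this already pins down every rational left-leaf through \autoref{thm:canonical_unique}. I would then show the canonical constraints rigidify the top-level split $A=LR$: the rightmost integral tangle is forced by the additive expansion \eqref{eq:tree_decomposition_sum} together with the twist-normalization, while the ring constraints fix the placement of every ring $2(\overline{2}0)$ uniquely (pushed as far right as the tree admits). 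As $L$ and $R$ are thereby determined as move-classes of strictly smaller tangles, the inductive hypothesis gives $L_1=L_2$ and $R_1=R_2$, whence $T_1=T_2$.

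The main obstacle is the confluence step, and within it the interaction of flypes with ring moves. A flype can relocate a subtangle across the principal diagonal (the four formulas of \autoref{def:flype_move}) and a ring can slide along an arc, so two superficially different canonical trees could a priori be move-equivalent. The crux is to prove that the catalogue of \autoref{fig:restricted} is \emph{complete}: every overlap of two applicable moves (the critical pairs of the rewriting system) reduces to a common canonical tree, equivalently, re-canonicalizing any single-move image of a canonical tree returns the original. This is a case-by-case critical-pair analysis over the move types, and it is exactly where mutant tangles must be separated — the left/right asymmetry recorded by the tree is the data a flype would otherwise scramble, so the proof must verify that this asymmetry is preserved by each confluence diagram. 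Establishing completeness, combined with the terminating complexity measure, closes the argument.
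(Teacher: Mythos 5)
Your overall strategy---normalize every tree by an explicit procedure and then argue that the normal form is unchanged when a twist, flype, ring, or elementary move is applied first---is the same strategy the paper uses: \autoref{alg:treefix} is the normalization, \autoref{thm:canonical_algorithm} is your existence/termination claim, and \autoref{thm:algorithm_invariant_other} together with \autoref{thm:algorithm_invariant_flype} is your confluence claim. The problem is that essentially all of the mathematical content is deferred to your ``main obstacle'' paragraph and never carried out, and the rewriting-system framing as stated does not quite close the argument. A canonical tree still admits flypes and ring moves (these can be applied at \emph{any} subtangle, not only at a forbidden configuration of \autoref{fig:restricted}), so to make canonical trees the normal forms you must restrict the rewriting rules to fire only on forbidden configurations; but then the equivalence generated by the restricted, oriented rules is a priori strictly smaller than the equivalence generated by the unoriented isotopy moves, and Newman's lemma for the restricted system says nothing about two canonical trees related by, say, a flype performed deep inside the tree. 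What is actually needed---and what occupies nearly the whole of the paper's proof---is the position-by-position verification that pre-composing the normalization with a single move applied to an \emph{arbitrary} tree (the intermediate trees in a chain of moves need not be canonical) leaves the output unchanged; this is the four-case, many-subcase analysis of \autoref{thm:algorithm_invariant_flype}. Your phrasing ``re-canonicalizing any single-move image of a canonical tree returns the original'' is both too weak (it must hold for non-canonical inputs) and unproved.

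Two further concrete gaps. First, your termination measure (``crossing number refined by a well-ordering on tree shapes that strictly decreases under each elimination'') is asserted, not exhibited; the normalizing rewrites do \emph{not} monotonically decrease crossing number (the moves $\beta_-$ and $\beta_1$ of \autoref{tab:traversal_moves} send the right subtangle $B^m$ to $B_x^{m\mp 1}$ and can therefore increase its size when $m$ has the unfavourable sign), and the paper itself must isolate an exceptional case (Case 4 of \autoref{thm:algorithm_invariant_flype}) where progress is not monotone. Second, your induction on crossing number for uniqueness tacitly assumes the moves respect the top-level split $A=LR$, but a flype at the root and a ring move both mix $L$ and $R$, so ``the inductive hypothesis gives $L_1=L_2$ and $R_1=R_2$'' does not follow without exactly the critical-pair analysis you postponed. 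In short: the plan points in the right direction and correctly identifies the crux, but the theorem is not proved until that crux is executed.
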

We will proof the theorem by an algorithm (\autoref{alg:treefix}) that puts a prime tangle into its canonical form and show that the algorithm returns the same result after performing twist, flype, ring and/or elementery moves.

\begin{table}[h!]
    \caption{Possible moves for traversals of the tree-fix algorithm. Empty dot represent a node visited by an algorithm. $m$ and $n$ represent integral tangles, while $k$ represents strictly positive integral tangle.}
    \label{tab:traversal_moves}
    \centering
    \begin{tabular}{llp{3.2cm}lll}
        \toprule
        \multicolumn{2}{l}{Part 1. moves:}&&&&\\
        \cmidrule{1-2}
        \begin{tikzpicture}[scale=0.7, baseline=(T1)]
            \node[tree_mid={T1}] (tree1)
            {
                \node[phantom] (T1) {}; & \node[empty] (R1) {}; & \node (RR1) {};\\
                &\node (RL1) {};\\
            };
            \node[labelright={RR1}] {$B^m$};
            \node[labelright={RL1}] {$1/q$};
            \draw 
                (T1) edge[dottededge] (R1)
                (R1) edge (RR1)
                (R1) edge (RL1);
        \end{tikzpicture}%
        &
        \begin{tikzpicture}[baseline=(m1)]
            \node[tnode] (m1) {$\arra{\mathbb{Z}}$};
        \end{tikzpicture}%
        &
        \begin{tikzpicture}[scale=0.7, baseline=(T1)]
            \node[tree_mid={T1}] (tree1)
            {
                \node[phantom] (T1) {}; & \node (R1) {};\\
            };
            \node[labelright={R1}] {$B_{x^q}^{m+ q}$};
            \draw 
                (T1) edge[dottededge] (R1);
            \end{tikzpicture}%
        &
        \begin{tikzpicture}[scale=0.7, baseline=(L1)]
            \node[tree_mid={L1}] (tree1)
            {
                \node[phantom] (T1) {};\\
                \node[empty] (L1) {}; & \node (LR1) {};\\
                \node (LL1) {};\\
            };
            \node[labelright={LR1}] {$n$};
            \node[labelright={LL1}] {$p/q$};
            \draw 
                (T1) edge (L1)
                (L1) edge (LR1)
                (L1) edge (LL1);
        \end{tikzpicture}%
        &
        \begin{tikzpicture}[baseline=(m1)]
            \node[tnode] (m1) {$\arra{\mathbb{Q}}$};
        \end{tikzpicture}%
        &
        \begin{tikzpicture}[scale=0.7, baseline=(L1)]
            \node[tree_mid={L1}] (tree1)
            {
                \node[phantom] (T1) {};\\
                \node (L1) {};\\
            };
            \node[labelright={L1}] {$\frac{q+pn}{p}$};
            \draw 
                (T1) edge (L1);
        \end{tikzpicture}\\
        \phantom{aa}&&&&& \\
        \midrule
        \phantom{aa}&&&&& \\
        
        \multicolumn{2}{l}{Part 2. moves:}&&&&\\
        \cmidrule{1-2}
        \begin{tikzpicture}[scale=0.7, baseline=(T1)]
            \node[tree_mid={T1}] (tree1)
            {
                \node[empty] (T1) {}; & \node (R1) {};\\
                \node (L1) {};\\
            };
            \node[labelright={R1}] {$B^m$};
            \node[labelright={L1}] {$p/q$};
            \draw 
                (T1) edge (R1)
                (T1) edge (L1);
        \end{tikzpicture}%
        &
        \begin{tikzpicture}[baseline=(m1)]
            \node[tnode] (m1) {$\arrb{\mathbb{Q}}$};
        \end{tikzpicture}%
        &
        \begin{tikzpicture}[scale=0.7, baseline=(T1)]
            \node[tree_mid={T1}] (tree1)
            {
                \node (T1) {}; & \node (R1) {};\\
                \node (L1) {};\\
            };
            \node[labelright={R1}] {$B_{x^{\lfloor q/p \rfloor}}^{m+\lfloor q/p \rfloor}$};
            \node[labelright={L1}] {$\frac{p}{q \bmod{p}}$};
            \draw 
                (T1) edge (R1)
                (T1) edge (L1);
        \end{tikzpicture}%
        &
        \begin{tikzpicture}[scale=0.7, baseline=(T1)]
            \node[tree_mid={T1}] (tree1)
            {
                \node[empty] (T1) {}; & \node (R1) {};\\
                \node (L1) {}; & \node (LR1) {};\\
                \node (LL1) {}; \\
            };
            \node[labelright={R1}] {$B^m$};
            \node[labelright={LR1}] {$0$};
            \node[labelright={LL1}] {$A^n$};
            \draw 
                (T1) edge (R1)
                (T1) edge (L1)
                (L1) edge (LR1)
                (L1) edge (LL1);
        \end{tikzpicture}%
        &
        \begin{tikzpicture}[baseline=(m1)]
            \node[tnode] (m1) {$\arrb{0}$};
        \end{tikzpicture}%
        &
        \begin{tikzpicture}[scale=0.7, baseline=(T1)]
            \node[tree_mid={T1}] (tree1)
            {
                \node (R1) {};\\
            };
            \node[labelright={R1}] {$A^{B_{x^n}^{n+m}}$};
        \end{tikzpicture}\\
        \phantom{aa}&&&&&\\

        \begin{tikzpicture}[scale=0.7, baseline=(T1)]
            \node[tree_mid={T1}] (tree1)
            {
                \node (T1) {}; & \node (R1) {};\\
                \node[empty] (L1) {}; & \node (LR1) {};\\
                \node (LL1) {}; \\
            };
            \node[labelright={R1}] {$B^m$};
            \node[labelright={LR1}] {$\overline{k}$};
            \node[labelright={LL1}] {$A$};
            \draw 
                (T1) edge (R1)
                (T1) edge (L1)
                (L1) edge (LR1)
                (L1) edge (LL1);
        \end{tikzpicture}%
        &
        \begin{tikzpicture}[baseline=(m1)]
            \node[tnode] (m1) {$\arrb{-}$};
        \end{tikzpicture}%
        &
        \begin{tikzpicture}[scale=0.7, baseline=(T1)]
            \node[tree_mid={T1}] (tree1)
            {
                \node (T1) {}; & \node (R1) {};\\
                \node (L1) {}; & \node (LR1) {};\\
                \node (LL1) {}; & \node (LLR1) {};\\
                \node (LLL1) {};\\
            };
            \node[labelright={R1}] {$B_x^{m-1}$};
            \node[labelright={LR1}] {$1$};
            \node[labelright={LLR1}] {$k-1$};
            \node[labelright={LLL1}] {$\overline{A}$};
            \draw 
                (T1) edge (R1)
                (T1) edge (L1)
                (L1) edge (LR1)
                (L1) edge (LL1)
                (LL1) edge (LLR1)
                (LL1) edge (LLL1);
        \end{tikzpicture}%
        &
        \begin{tikzpicture}[scale=0.7, baseline=(T1)]
            \node[tree_mid={T1}] (tree1)
            {
                \node (T1) {}; & \node (R1) {};\\
                \node[empty] (L1) {}; & \node (LR1) {};\\
                \node (LL1) {}; & \node (LLR1) {};  & \node (LLRR1) {};\\
                \node (LLL1) {}; & \node (LLRL1) {};\\
            };
            \node[labelright={R1}] {$B^m$};
            \node[labelright={LR1}] {$1$};
            \node[labelright={LLRR1}] {$A^n$};
            \node[labelright={LLL1}] {$C$};
            \node[labelright={LLRL1}] {$D$};
            \draw 
                (T1) edge (R1)
                (T1) edge (L1)
                (L1) edge (LR1)
                (L1) edge (LL1)
                (LLR1) edge (LLRR1)
                (LLR1) edge (LLRL1)
                (LL1) edge (LLR1)
                (LL1) edge (LLL1);
        \end{tikzpicture}%
        &
        \begin{tikzpicture}[baseline=(m1)]
            \node[tnode] (m1) {$\arrb{1}$};
        \end{tikzpicture}%
        &
       \begin{tikzpicture}[scale=0.7, baseline=(T1)]
            \node[tree_mid={T1}] (tree1)
            {
                \node (T1) {}; & \node (R1) {};\\
                \node (L1) {}; & \node (LR1) {};  & \node (LRR1) {};\\
                \node (LL1) {}; & \node (LRL1) {};\\
            };
            \node[labelright={R1}] {$B_x^{m+1}$};
            \node[labelright={LRR1}] {$\overline{A^{n+1}}$};
            \node[labelright={LL1}] {$\overline{C}$};
            \node[labelright={LRL1}] {$\overline{D}$};
            \draw 
                (T1) edge (R1)
                (T1) edge (L1)
                (L1) edge (LR1)
                (L1) edge (LL1)
                (LR1) edge (LRR1)
                (LR1) edge (LRL1);
        \end{tikzpicture}\\
        \phantom{aa}&&&&&\\
        \midrule
        \phantom{aa}&&&&& \\
        
        \multicolumn{2}{l}{Part 3. move:}&&&&\\
        \cmidrule{1-2}
        \begin{tikzpicture}[baseline=(L1.base)]
            \node[tree_mid={T1}] (tree1)
            {
                \node[empty] (T1) {}; & \node (R1) {}; & \node (RR1) {};\\
                \node (L1) {};  & \node (LR1) {}; & \node (LRR1) {};\\
                \node (LL1) {}; & \node (LRL1) {};\\
            };
            \node[labelabove={R1}] {$B^m$};
            \node[labelright={RR1}] {$m$};
            \node[labelright={LL1}] {$2$};
            \node[labelright={LRL1}] {$2$};
            \node[labelright={LRR1}] {$\overline{1}$};
            \draw 
                (T1) edge (R1)
                (T1) edge (L1)
                (L1) edge (LR1)
                (L1) edge (LL1)
                (LR1) edge (LRL1)
                (LR1) edge (LRR1)
                (R1) edge[dottededge] (RR1);
        \end{tikzpicture}%
        &
        \begin{tikzpicture}[baseline=(aa.base)]
            \node (aa) {$\xrightarrow{\gamma}$};
        \end{tikzpicture}%
        &
        \begin{tikzpicture}[baseline=(RL2.base)]
            \node[tree_mid={T2}] (tree2)
            {
                \node (T2) {}; & \node (R2) {}; & \node (RR2) {};\\
                & \node (RL2) {}; & \node (RLR2) {}; & \node (RLRR2) {}; \\
                & \node (RLL2) {}; & \node (RLRL2) {}; \\
            };
            \node[labelabove={T2}] {$B^{2(2\overline{1})m}$};
            \node[labelright={RR2}] {$m$};
            \node[labelright={RLL2}] {$2$};
            \node[labelright={RLRL2}] {$2$};
            \node[labelright={RLRR2}] {$\overline{1}$};
            \draw 
                (T2) edge[dottededge] (R2)
                (R2) edge (RR2)
                (R2) edge (RL2)
                (RL2) edge (RLL2)
                (RL2) edge (RLR2)
                (RLR2) edge (RLRL2)
                (RLR2) edge (RLRR2);
        \end{tikzpicture}\\
        \phantom{aa}&&&&& \\
        \bottomrule
    \end{tabular}
\end{table}

\begin{algorithm}
    \caption{Algorithm operates on objects $N$ of a \textsc{Node} class. Every \textsc{Node} object has attributes which point to its \textit{parent}, \textit{left\_child} and \textit{right\_child} and knows if it \textit{is\_a\_left\_child} of its \textit{parent}. If a \textsc{Node} represents a rational tangle, then it has a non-\texttt{None} value of a \textit{fraction}. \textsc{FixLocal} performs one of $\alpha_\mathbb{Z}$, $\alpha_\mathbb{Q}$, $\beta_\mathbb{Q}$, $\beta_0$, $\beta_-$, $\beta_1$, $\gamma$ moves on a node $N$ if it is possible (moves are presented in \autoref{tab:traversal_moves}).} \label{alg:treefix}
    \begin{algorithmic}[1]
        \Function{GetCanonicalTree}{$T$} \Comment{$T$ is address to a tree-top \textsc{Node}}
            \State $nodes \gets$ \Call{Reversed(PreOrderTraversal}{$T$}) \Comment{1st part}
            \For{$N$ in $nodes$} 
                \State \Call{FixLocal}{$N$,[$\alpha_\mathbb{Z}$, $\alpha_\mathbb{Q}$]}
            \EndFor
            \Repeat \Comment{2nd part}
                \State $changed \gets$ \texttt{False}
                \State $nodes \gets$ \Call{PreOrderTraversal}{$T$}
                \For{$N$ in $nodes$}
                    \State $changed \gets$ \Call{FixLocal}{$N$,[$\beta_\mathbb{Q}$, $\beta_0$, $\beta_-$, $\beta_1$]}
                    \If {$changed=$ \texttt{True}}
                        \State \textbf{break the for loop}
                    \EndIf
                \EndFor
            \Until {$changed=$ \texttt{False}}
            \State $nodes \gets$ \Call{Reversed(PreOrderTraversal}{$T$}) \Comment{3rd part}
            \For{$N$ in $nodes$} 
                \State \Call{FixLocal}{$N$,[$\gamma$]}
            \EndFor
            \State \Return $T$
        \EndFunction
        \Statex
        \Function{FixLocal}{$N$, $moves$} 
            \For{$move$ in $moves$}
                \If {$move$ on node $N$ is possible}
                    \State \textbf{perform} $move$ on a node $N$
                    \State \Return \texttt{True}
                \EndIf
            \EndFor
            \State \Return \texttt{False}
        \EndFunction
        \Statex
        \Function{PreOrderTraversal}{$N$}
            \If {$N.fraction =$ \texttt{None}}
                \State \Return [$N$]
            \Else 
                \State $l \gets$ \Call{PreOrderTraversal}{$N.left\_child$}
                \State $r \gets$ \Call{PreOrderTraversal}{$N.right\_child$}
                \State \Return joined lists $l$ and $r$
            \EndIf
        \EndFunction
    \end{algorithmic}
\end{algorithm}

To introduce the algorithm, we need to define ordering of tree elements, and the theory of binary trees already gives us such tools. We will define both a top-down ordering and a bottom-up ordering. The top-down ordering results from a pre-order traversal, e.g. for tree from \autoref{tree:expression_tree}, the top-down ordering is $[A, L, L_L, L_{L_L}, R_{L_L}, R_L, L_{R_L}, R_{R_L}, R, L_R, L_{L_R}, R_{L_R}, R_R, L_{R_R}, R_{R_R}]$. The bottom-up ordering is the top-down ordering reversed.

\autoref{alg:treefix} transforms an algebraic tangle, represented by a binary tree, into its canonical representation (\autoref{thm:canonical_algorithm}). {In general, when the tree is traversed, and when a node is visited, possible isotopy preserving moves are performed on the node.} The algorithm is divided into three parts, which differ how the tree is traversed and which moves $\alpha_\mathbb{Z}$, $\alpha_\mathbb{Q}$, $\beta_\mathbb{Q}$, $\beta_0$, $\beta_-$, $\beta_1$, $\gamma$, presented in \autoref{tab:traversal_moves}, are possible to perform: 
{
\begin{enumerate}
    \item In the first part, we represent rational tangles in a possibly concise way and ensure that e.g., a subtangle $0$ is represented as $0$, not as $2(2\overline{1}0)1\overline{21}$.
    This prepares the algorithm for the second part to let it correctly recognize subtree patterns. At the same time, the twists are performed to keep twistable integral tangles as right leaves. The tree is traversed bottom-up once, and each node $\alpha_\mathbb{Z}$ (twist: $q\!+\!B \twist B_{x^q}\!+\!q$) and $\alpha_\mathbb{Q}$ (contraction: \autoref{fig:rational_contraction}) moves are performed if possible.
    \item In the second part, we perform the flypes required to follow the canonical representation. The tree is traversed top-down, and each time a move is performed, the traversal starts again from the top. 
    The move $\beta_0$ (\autoref{thm:generalized_twist}) is equivalent to bracket juggling and twisting integral tangles, to remove $0$ subtangles when possible. The moves $\beta_\mathbb{Q}$, $\beta_-$, $\beta_1$ are flypes followed by twists. 
    Sometimes both $\beta_0$ and $\beta_-$ could be performed, but $\beta_0$ has higher priority, to ensure that $A^n0\overline{k}B^m$ is twisted into $A^{n-k}B^m$, instead of being flyped into $\overline{A}1B_x^{m-1}$. The algorithm follows \qq{twists before flypes} rule.
    The priority couldn't be ensured by adding $\beta_0$ to the first part of the algorithm, since $0$-right leaf can appear as a result of both $\beta_-$ and $\beta_1$ moves.

    \item After the second part, the only missing thing is correct positioning of rings $2(\overline{2}0)$ (which in canonical representation are represented by $2(2\overline{1})$). Their position is fixed in the third part, by bottom-up traversal and the $\gamma$ move.
\end{enumerate}
}

\begin{lemma}\label{thm:algorithm_invariant_other}
    \autoref{alg:treefix} gives the same output for tangles in a tree form that differ by a twist, ring, elementary moves, or bracket juggling.
\end{lemma}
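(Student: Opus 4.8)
The plan is to prove the lemma by treating the four move types separately and showing, in each case, that the extra move is \emph{absorbed} by a specific pass of \autoref{alg:treefix}, so that feeding the algorithm either of the two input trees produces the identical canonical output. The organizing principle is that twist, ring, elementary moves, and bracket juggling are precisely the relations that the $\alpha$, $\beta_0$, and $\gamma$ passes are engineered to normalize, whereas genuine flypes (handled by $\beta_\mathbb{Q}$, $\beta_-$, $\beta_1$) are deliberately excluded from the statement of this lemma. Throughout, the underlying invariant that makes such an analysis possible is that all four moves preserve the fraction and, more importantly, only alter structural features of the tree that the algorithm rewrites away.

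First I would dispose of twists and bracket juggling, which are absorbed by Parts 1 and 2. A twist is literally the $\alpha_\mathbb{Z}$ move $q\!+\!B \twist B_{x^q}\!+\!q$ of \autoref{def:twist_move}, and Part 1 applies $\alpha_\mathbb{Z}$ exhaustively during a single bottom-up sweep, so that afterwards every twistable integral tangle sits as a right leaf in a fixed position. Bracket juggling (\eqref{eq:bracket_juggling}, \eqref{eq:associativity}) only re-associates the additive structure and inserts or deletes neutral $0$ and $00$ pieces; since the fraction is additive and $\alpha_\mathbb{Q}$ contracts each maximal rational subtangle to its canonical fraction, the uniqueness of rational canonical forms (\autoref{thm:canonical_unique}) forces the two trees to coincide after the first pass, with any residual $0$ right-subtangles removed by the $\beta_0$ move (\autoref{thm:generalized_twist}) in Part 2. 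The key point to verify is that this normal form is \emph{confluent}: the sweeps must reach the same fixed tree independently of where the extra twist or re-bracketing was inserted.

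Next I would handle the elementary moves, which are cleaned up across Parts 1 and 2. Move~I ($01\reidone 00$) and Move~II ($10\overline{1}\reidtwo 0$) only create or destroy trivial $0$ and $\infty=00$ fragments; these are swallowed by the $\alpha_\mathbb{Q}$ contraction in Part 1 and by the $\beta_0$ move in Part 2, which deletes $0$ right-subtangles. Here one must check that no spurious flype ($\beta_-$ or $\beta_1$) is triggered asymmetrically on the two trees; this follows from the stated priority rule (``twists before flypes,'' with $\beta_0$ preferred over $\beta_-$), which guarantees that the trivial fragments are absorbed before any flype can fire.

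Finally, ring moves are absorbed by Part 3. A ring $2(\overline{2}0)$ admits no alternating form and is treated as an atomic left-child throughout Parts 1 and 2, so a ring move (\autoref{def:ring_move}) merely relocates it past a neighbouring subtangle without disturbing the earlier normalization; the $\gamma$ move, applied bottom-up in Part 3, is exactly the normalization of ring positions, and its confluence yields the same ring placement for both trees. The main obstacle throughout is precisely this confluence: I must show that inserting any one of the four moves before the algorithm cannot steer the multi-pass procedure to a different fixed point. I would establish this by checking that each move commutes with the relevant pass on the finitely many local tree patterns of \autoref{tab:traversal_moves}, so that the order in which the moves and the algorithmic rewrites are applied is immaterial.
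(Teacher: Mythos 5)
Your proposal is correct and follows essentially the same strategy as the paper's proof: ring moves are normalized by the $\gamma$ pass in Part 3, rational contraction in Part 1 absorbs the elementary moves and re-associations inside rational subtangles, and the $\beta_0$ analysis in Part 2 (showing $A0B0C$ and $A0(B0C)$ reach the same tree) subsumes twists, bracket juggling, and elementary move II. The only notable difference is that you are more explicit about the confluence obligation, which the paper leaves implicit; your plan to verify commutation on the local patterns of \autoref{tab:traversal_moves} is a reasonable way to discharge it.
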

\begin{proof}
    The ring move switches horizontally two neighboring subtrees (one of which is a ring ($2(2\overline{1})$)). The third part of the algorithm \qq{pushes} all rings to the right, fixing positions of all $2(2\overline{1})$ subtangles.
    
    The first part of the algorithm collects bottom-up rational tangle subtangles into a single fraction, which ensures that every rational tangle is in its simplest form before the second part of the algorithm.
    \begin{center}
        \begin{tabular}{ccccc}
            \begin{tikzpicture}[scale=0.7, baseline=(T1), transform shape]
                \node[tree_small={T1}] (tree1)
                {
                    \node (T1) {}; & \node (R1) {}; \\
                    \node (L1)[empty] {}; & \node (LR1) {}; \\
                    \node (LL1) {}; \\
                };
                \draw 
                    (T1) edge (R1)
                    (T1) edge (L1)
                    (L1) edge (LR1)
                    (L1) edge (LL1);
                \node[labelright={R1}] {$m$};
                \node[labelright={LR1}] {$0$};
                \node[labelright={LL1}] {$n$};
            \end{tikzpicture}%
            &
            \begin{tikzpicture}[baseline=(m1)]
                \node[tnode] (m1) {$\arra{\mathbb{Q}}$};
            \end{tikzpicture}%
            &
            \begin{tikzpicture}[scale=0.7, baseline=(T1), transform shape]
                \node[tree_small={T1}] (tree1)
                {
                    \node (T1)[empty] {}; & \node (R1) {}; \\
                    \node (L1) {}; \\
                };
                \draw 
                    (T1) edge (R1)
                    (T1) edge (L1);
                \node[labelright={R1}] {$m$};
                \node[labelright={L1}] {$\frac{1}{n}$};
            \end{tikzpicture}%
            &
            \begin{tikzpicture}[baseline=(m1)]
                \node[tnode] (m1) {$\arra{\mathbb{Q}}$};
            \end{tikzpicture}%
            &
            \begin{tikzpicture}[scale=0.7, baseline=(T2), transform shape]
                \node[tree_small={T2}] (tree2)
                {
                    \node (T2) {}; \\
                };
                \node[labelright={T2}] {$n+m$};
            \end{tikzpicture}
        \end{tabular}
    \end{center}

    The second part of the algorithm returns the same output for both $A0B0C$ and $A0(B0C)$ tangles. Anything that happens to tangle $A^{B0C}$ before the traversal reaches $B0C$, has the same impact on the $A^{B^{C}}$ tangle. Note that if the tangle $A$ is integral, then both $A0B0C$ and $A0(B0C)$ tangles become identical earlier after the first traversal. Twist moves, bracket juggling, and the II elementary move are special cases of this example.
    \begin{center}
        \begin{tabular}{ccccccccc}
            \begin{tikzpicture}[scale=0.7, baseline=(T1), transform shape]
                \node[tree_small={T1}] (tree1)
                {
                    \node (T1)[empty] {}; & \node (R1) {}; \\
                    \node (L1) {}; & \node (LR1) {}; \\
                    \node (LL1) {}; & \node (LLR1) {}; \\
                    \node (LLL1) {}; & \node (LLLR1) {}; \\
                    \node (LLLL1) {}; \\
                };
                \draw 
                    (T1) edge (R1)
                    (T1) edge (L1)
                    (L1) edge (LR1)
                    (L1) edge (LL1)
                    (LL1) edge (LLR1)
                    (LL1) edge (LLL1)
                    (LLL1) edge (LLLR1)
                    (LLL1) edge (LLLL1);
                \node[labelright={R1}] {$C$};
                \node[labelright={LR1}] {$0$};
                \node[labelright={LLR1}] {$B$};
                \node[labelright={LLLR1}] {$0$};
                \node[labelright={LLLL1}] {$A$};
            \end{tikzpicture}%
            &
            \begin{tikzpicture}[baseline=(m1)]
                \node[tnode] (m1) {$\arrb{0}$};
            \end{tikzpicture}%
            &
            \begin{tikzpicture}[scale=0.7, baseline=(T1), transform shape]
                \node[tree_small={T1}] (tree1)
                {
                    \node (T1)[empty] {}; & \node (R1) {}; \\
                    \node (L1) {}; & \node (LR1) {}; \\
                    \node (LL1) {}; \\
                };
                \draw 
                    (T1) edge (R1)
                    (T1) edge (L1)
                    (L1) edge (LR1)
                    (L1) edge (LL1);
                \node[labelright={R1}] {$B^{C}$};
                \node[labelright={LR1}] {$0$};
                \node[labelright={LL1}] {$A$};
            \end{tikzpicture}%
            &
            \begin{tikzpicture}[baseline=(m1)]
                \node[tnode] (m1) {$\arrb{0}$};
            \end{tikzpicture}%
            &
            \begin{tikzpicture}[scale=0.7, baseline=(T1), transform shape]
                \node[tree_small={T1}] (tree1)
                {
                    \node (T1) {}; & \node (R1) {}; \\
                };
                \draw 
                    (T1) edge[dottededge] (R1);
                \node[labelabove={T1}] {$A^{B^{C}}$};
                \node[labelright={R1}] {$B^{C}$};
            \end{tikzpicture}
            &
            \begin{tikzpicture}[baseline=(m1)]
                \node[tnode] (m1) {$\xleftarrow{\mathbb{\beta}_0}$};
            \end{tikzpicture}%
            &
            \begin{tikzpicture}[scale=0.7, baseline=(T1), transform shape]
                \node[tree_small={T1}] (tree1)
                {
                    \node (T1) {}; & \node[empty] (R1) {}; & \node (RR1) {}; \\
                    & \node (RL1) {}; & \node (RLR1) {};\\
                    & \node (RLL1) {};\\
                };
                \draw 
                    (T1) edge[dottededge] (R1)
                    (R1) edge (RR1)
                    (R1) edge (RL1)
                    (RL1) edge (RLR1)
                    (RL1) edge (RLL1);
                \node[labelabove={T1}] {$A^{B0C}$};
                \node[labelright={RR1}] {$C$};
                \node[labelright={RLR1}] {$0$};
                \node[labelright={RLL1}] {$B$};
            \end{tikzpicture}%
            &
            \begin{tikzpicture}[baseline=(m1)]
                \node[tnode] (m1) {$\xleftarrow{\mathbb{\beta}_0}$};
            \end{tikzpicture}%
            &
            \begin{tikzpicture}[scale=0.7, baseline=(T1), transform shape]
                \node[tree_small={T1}] (tree1)
                {
                    \node (T1)[empty] {}; && \node (R1) {}; & \node (RR1) {}; \\
                    \node (L1) {}; & \node (LR1) {}; & \node (RL1) {}; & \node (RLR1) {};\\
                    \node (LL1) {}; && \node (RLL1) {};\\
                };
                \draw 
                    (T1) edge (R1)
                    (T1) edge (L1)
                    (L1) edge (LR1)
                    (L1) edge (LL1)
                    (R1) edge (RR1)
                    (R1) edge (RL1)
                    (RL1) edge (RLR1)
                    (RL1) edge (RLL1);
                \node[labelright={RR1}] {$C$};
                \node[labelright={RLR1}] {$0$};
                \node[labelright={RLL1}] {$B$};
                \node[labelright={LR1}] {$0$};
                \node[labelright={LL1}] {$A$};
            \end{tikzpicture}\\
            \multicolumn{9}{l}{\footnotesize Rotations were omitted for a clarity of the representation.}
        \end{tabular}
    \end{center}
\end{proof}

\begin{lemma}\label{thm:algorithm_invariant_flype}
    \autoref{alg:treefix} gives the same output for tangles in a tree form that differ by flype moves.
\end{lemma}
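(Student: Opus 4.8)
The plan is to mirror the strategy of \autoref{thm:algorithm_invariant_other}: since the flype-type moves $\beta_\mathbb{Q}$, $\beta_-$, and $\beta_1$ of \autoref{alg:treefix} are themselves flypes (followed by twists), I would show that applying an arbitrary flype to the input tree produces a configuration that the second part of the algorithm re-normalizes back to the same canonical tree. Concretely, I would argue that for an input tangle $T$ and any flype $F$ one has $\textsc{GetCanonicalTree}(F(T)) = \textsc{GetCanonicalTree}(T)$, by showing that the flype is systematically \qq{undone} during the top-down $\beta$-passes so that both runs terminate on an identical tree.

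First I would translate the four flype formulas of \autoref{def:flype_move} into local rewrites of the expression tree, recording in each case which subtree acquires a bar $\overline{\,\cdot\,}$, where the $1$ and $\overline{1}$ tangles are deposited, and how the rotation index on the flyped subtangle changes. Each resulting local pattern is exactly one of the \qq{before} patterns of $\beta_\mathbb{Q}$, $\beta_-$, or $\beta_1$ in \autoref{tab:traversal_moves}; I would verify that the algorithm, upon reaching that node in its top-down traversal, performs the matching $\beta$ move and returns the subtree to its pre-flype orientation. Because the fraction is flype-invariant (\autoref{thm:fraction_flype}), the rational left-leaves collected in the first part coincide for $T$ and $F(T)$, so the two runs enter the second part from trees differing only by the local flype pattern, and the handedness of the flype together with its two Reidemeister-III-related SE-arc arrangements reduces to finitely many such patterns to check.

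The hard part will be the non-local bookkeeping of rotation and mirror indices. A flype performed deep in the tree mirrors an entire subtangle and shifts the neighbouring $\pm 1$ crossings, and these changes feed into the twist indices ($B_{x^n}$, $B_x^{m-1}$, $B_x^{m+1}$, etc.) that the algorithm propagates upward as it restarts from the top after each move. I would need to check that the $\beta_-$ and $\beta_1$ moves absorb the bar and the displaced integral tangles so that the $x^n$ indices reconcile on both branches — in effect, that the rewriting system consisting of flypes together with the algorithm's $\beta$-moves is confluent. Since the canonical representation forbids precisely the subtree patterns of \autoref{fig:restricted}, only one flype orientation survives at each node, which should pin the normal form uniquely; the remaining work is to confirm that the restart-from-top discipline of the second part never lets a flype created lower in the tree escape correction before the algorithm terminates, so that the two runs converge regardless of where the initial flype was applied.
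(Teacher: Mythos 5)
Your overall strategy — run \textsc{GetCanonicalTree} on $T$ and on $F(T)$ and show both terminate on the same tree — is the paper's strategy too, but your central mechanism is wrong in the cases that carry all the difficulty. You claim that after translating a flype into a local tree rewrite, ``each resulting local pattern is exactly one of the \qq{before} patterns of $\beta_\mathbb{Q}$, $\beta_-$, or $\beta_1$'' and that the algorithm, on reaching that node, ``returns the subtree to its pre-flype orientation.'' That is true only when the flyped subtangle $A$ sits as a right-child/tree-top or as the left-child of a right-child: there a short chain of $\beta_0$ and $\beta_-$ moves literally cancels the inserted $\overline{\phantom{A}}$, $1$, $\overline{1}$, $0$ pattern. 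The paper's proof is organized not by which of the four flype formulas was used but by \emph{where} $A$ sits in the tree (right-child; left-child of a right-child; left-child of a left-child of a right-child; deeper), and in the third case with $B=\overline{m}$, $m\geq 3$, the flyped and unflyped runs do \emph{not} reconverge at the flype site — the paper explicitly notes ``the results do not match'' — and agreement is only recovered after the algorithm descends into $A$ itself, with separate sub-arguments depending on whether $A$'s right-child is a positive integral tangle, a non-positive integral tangle, or a non-rational product $X(YZ)$. Your proposal has no mechanism for this: the flype is not locally undone there; rather both runs are driven to a common \emph{third} form by moves performed inside $A$, one level below where the flype acted.

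The same issue recurs in the deepest case, where $C=1$ and $B=\overline{m}$ force $C_x^{k-1}=0$ after the $\beta_-$ move, triggering a $\beta_0$ at the grandparent $D$ and collapsing the tree in a way that again requires comparing the two runs through several further moves. You correctly flag the ``non-local bookkeeping'' and the need for confluence as the hard part, but flagging it is not proving it — the confluence claim is exactly the content of the case analysis you would still have to carry out, and your stated reason for expecting it (that the forbidden patterns of \autoref{fig:restricted} pin down one surviving flype orientation per node) does not by itself show that the two traversals, which restart from the top after every move and may perform moves in different orders and at different depths, arrive at the same fixed point. To complete the argument you would need to replace the ``locally undone'' claim with the position-based case split and trace both runs explicitly in the non-matching cases.
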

\begin{proof}
We will check all possible positions of a tangle $A$ in a tree, perform a flype at each position, and show that algorithm always returns the same output. These positions can be divided into 4 cases:
\begin{enumerate}
    \item $A$ is a right-child/tree-top;
    \item $A$ is a left-child of a right-child/tree-top;
    \item $A$ is a left-child of a left-child of a right-child/tree-top;
    \item $A$ is a left-child of a left-child of a left-child, which collects all other cases.
\end{enumerate}
In all cases, $A$ is a non-rational tangle. If $A$ is a rational tangle, then it is represented by a fraction after a the first part of the algorithm, which is invariant to flypes.

There are 4 types of flypes; we will provide a proof for only one of them ($A\flype \overline{A}01\overline{1}10$), as proofs for other 3 flypes are analogous or simpler.

\medskip
\noindent {\bf Case 1.} $A$ is a right-child/tree top.
    \begin{flushleft}
        \begin{tabular}{cccccccccc}
            \begin{tikzpicture}[scale=0.7, baseline=(T1), transform shape]
                \node[tree_small={T1}] (tree1)
                {
                    \node[phantom] (P1) {}; & \node (T1) {}; \\
                };
                \draw 
                    (P1) edge[dottededge] (T1);
                \node[labelright={T1}] {$A^n$};
            \end{tikzpicture}%
            &
            \begin{tikzpicture}[baseline=(m1)]
                \node[tnode] (m1) {$\flype$};
            \end{tikzpicture}%
            &
            \begin{tikzpicture}[scale=0.7, baseline=(T1), transform shape]
                \node[tree_small={T1}] (tree1)
                {
                    \node[phantom] (P1) {}; & \node (T1) {}; & \node (R1) {}; \\
                    & \node (L1) {}; & \node (LR1) {}; \\
                    & \node[empty] (LL1) {}; & \node (LLR1) {}; \\
                    & \node (LLL1) {}; & \node (LLLR1) {}; \\
                    & \node (LLLL1) {}; & \node (LLLLR1) {}; \\
                    & \node (LLLLL1) {}; \\
                };
                \draw 
                    (P1) edge[dottededge] (T1)
                    (T1) edge (R1)
                    (T1) edge (L1)
                    (L1) edge (LR1)
                    (L1) edge (LL1)
                    (LL1) edge (LLR1)
                    (LL1) edge (LLL1)
                    (LLL1) edge (LLLR1)
                    (LLL1) edge (LLLL1)
                    (LLLL1) edge (LLLLR1)
                    (LLLL1) edge (LLLLL1);
                \node[labelright={R1}] {$0$};
                \node[labelright={LR1}] {$1$};
                \node[labelright={LLR1}] {$\overline{1}$};
                \node[labelright={LLLR1}] {$1$};
                \node[labelright={LLLLR1}] {$0$};
                \node[labelright={LLLLL1}] {$\overline{A^n}$};
            \end{tikzpicture}%
            &
            \begin{tikzpicture}[baseline=(m1)]
                \node[tnode] (m1) {$\arrb{-}$};
            \end{tikzpicture}%
            &
            \begin{tikzpicture}[scale=0.7, baseline=(T1), transform shape]
                \node[tree_small={T1}] (tree1)
                {
                    \node[phantom] (P1) {}; & \node (T1) {}; & \node (R1) {}; \\
                    & \node (L1) {}; & \node (LR1) {}; \\
                    & \node[empty] (LL1) {}; & \node (LLR1) {}; \\
                    & \node (LLL1) {}; & \node (LLLR1) {}; \\
                    & \node (LLLL1) {}; & \node (LLLLR1) {}; \\
                    & \node (LLLLL1) {}; & \node (LLLLLR1) {}; \\
                    & \node (LLLLLL1) {}; \\
                };
                \draw 
                    (P1) edge[dottededge] (T1)
                    (T1) edge (R1)
                    (T1) edge (L1)
                    (L1) edge (LR1)
                    (L1) edge (LL1)
                    (LL1) edge (LLR1)
                    (LL1) edge (LLL1)
                    (LLL1) edge (LLLR1)
                    (LLL1) edge (LLLL1)
                    (LLLL1) edge (LLLLR1)
                    (LLLL1) edge (LLLLL1)
                    (LLLLL1) edge (LLLLLR1)
                    (LLLLL1) edge (LLLLLL1);
                \node[labelright={R1}] {$0$};
                \node[labelright={LR1}] {$0$};
                \node[labelright={LLR1}] {$1$};
                \node[labelright={LLLR1}] {$0$};
                \node[labelright={LLLLR1}] {$\overline{1}$};
                \node[labelright={LLLLLR1}] {$0$};
                \node[labelright={LLLLLL1}] {$A^n$};
            \end{tikzpicture}%
            &
            \begin{tikzpicture}[baseline=(m1)]
                \coordinate (m1) at (0,.1);
                \node[tnode] at (0,0) {$\xrightarrow[\times 3]{\beta_0}$};
            \end{tikzpicture}%
            &
            \begin{tikzpicture}[scale=0.7, baseline=(T1), transform shape]
                \node[tree_small={T1}] (tree1)
                {
                    \node[phantom] (P1) {}; & \node (T1) {}; \\
                };
                \draw 
                    (P1) edge[dottededge] (T1);
                \node[labelright={T1}] {$A^n$};
            \end{tikzpicture}%
        \end{tabular}
    \end{flushleft}

\medskip
\noindent {\bf Case 2.} $A$ is a left-child of a right-child/tree top ($B$).
    \begin{flushleft}
        \begin{tabular}{cccccccc}
            \begin{tikzpicture}[scale=0.7, baseline=(T1), transform shape]
                \node[tree_small={T1}] (tree1)
                {
                    \node[phantom] (P1) {}; & \node (T1) {}; & \node (R1) {}; \\
                    & \node (L1) {}; \\
                };
                \draw 
                    (P1) edge[dottededge] (T1)
                    (T1) edge (R1)
                    (T1) edge (L1);
                \node[labelright={R1}] {$B^m$};
                \node[labelright={L1}] {$A^n$};
            \end{tikzpicture}%
            &
            \begin{tikzpicture}[baseline=(m1)]
                \node[tnode] (m1) {$\flype$};
            \end{tikzpicture}%
            &
            \begin{tikzpicture}[scale=0.7, baseline=(T1), transform shape]
                \node[tree_small={T1}] (tree1)
                {
                    \node[phantom] (P1) {}; & \node[empty] (T1) {}; & \node (R1) {}; \\
                    & \node (L1) {}; & \node (LR1) {}; \\
                    & \node (LL1) {}; & \node (LLR1) {}; \\
                    & \node (LLL1) {}; & \node (LLLR1) {}; \\
                    & \node (LLLL1) {}; & \node (LLLLR1) {}; \\
                    & \node (LLLLL1) {}; & \node (LLLLLR1) {}; \\
                    & \node (LLLLLL1) {}; \\
                };
                \draw 
                    (P1) edge[dottededge] (T1)
                    (T1) edge (R1)
                    (T1) edge (L1)
                    (L1) edge (LR1)
                    (L1) edge (LL1)
                    (LL1) edge (LLR1)
                    (LL1) edge (LLL1)
                    (LLL1) edge (LLLR1)
                    (LLL1) edge (LLLL1)
                    (LLLL1) edge (LLLLR1)
                    (LLLL1) edge (LLLLL1)
                    (LLLLL1) edge (LLLLLR1)
                    (LLLLL1) edge (LLLLLL1);
                \node[labelright={R1}] {$B^m$};
                \node[labelright={LR1}] {$0$};
                \node[labelright={LLR1}] {$1$};
                \node[labelright={LLLR1}] {$\overline{1}$};
                \node[labelright={LLLLR1}] {$1$};
                \node[labelright={LLLLLR1}] {$0$};
                \node[labelright={LLLLLL1}] {$\overline{A^n}$};
            \end{tikzpicture}%
            &
            \begin{tikzpicture}[baseline=(m1)]
                \node[tnode] (m1) {$\arrb{0}$};
            \end{tikzpicture}%
            &
            \begin{tikzpicture}[scale=0.7, baseline=(T1), transform shape]
                \node[tree_small={T1}] (tree1)
                {
                    \node[phantom] (P1) {}; & \node (T1) {}; & \node (R1) {}; \\
                    & \node[empty] (L1) {}; & \node (LR1) {}; \\
                    & \node (LL1) {}; & \node (LLR1) {}; \\
                    & \node (LLL1) {}; & \node (LLLR1) {}; \\
                    & \node (LLLL1) {}; \\
                };
                \draw 
                    (P1) edge[dottededge] (T1)
                    (T1) edge (R1)
                    (T1) edge (L1)
                    (L1) edge (LR1)
                    (L1) edge (LL1)
                    (LL1) edge (LLR1)
                    (LL1) edge (LLL1)
                    (LLL1) edge (LLLR1)
                    (LLL1) edge (LLLL1);
                \node[labelright={R1}] {$B_x^{m+1}$};
                \node[labelright={LR1}] {$\overline{1}$};
                \node[labelright={LLR1}] {$1$};
                \node[labelright={LLLR1}] {$0$};
                \node[labelright={LLLL1}] {$\overline{A^n}$};
            \end{tikzpicture}%
            &
            \begin{tikzpicture}[baseline=(m1)]
                \node[tnode] (m1) {$\arrb{-}$};
            \end{tikzpicture}%
            &
            \begin{tikzpicture}[scale=0.7, baseline=(T1), transform shape]
                \node[tree_small={T1}] (tree1)
                {
                    \node[phantom] (P1) {}; & \node (T1) {}; & \node (R1) {}; \\
                    & \node[empty] (L1) {}; & \node (LR1) {}; \\
                    & \node (LL1) {}; & \node (LLR1) {}; \\
                    & \node (LLL1) {}; & \node (LLLR1) {}; \\
                    & \node (LLLL1) {}; & \node (LLLLR1) {}; \\
                    & \node (LLLLL1) {}; \\
                };
                \draw 
                    (P1) edge[dottededge] (T1)
                    (T1) edge (R1)
                    (T1) edge (L1)
                    (L1) edge (LR1)
                    (L1) edge (LL1)
                    (LL1) edge (LLR1)
                    (LL1) edge (LLL1)
                    (LLL1) edge (LLLR1)
                    (LLL1) edge (LLLL1)
                    (LLLL1) edge (LLLLR1)
                    (LLLL1) edge (LLLLL1);
                \node[labelright={R1}] {$B^m$};
                \node[labelright={LR1}] {$1$};
                \node[labelright={LLR1}] {$0$};
                \node[labelright={LLLR1}] {$\overline{1}$};
                \node[labelright={LLLLR1}] {$0$};
                \node[labelright={LLLLL1}] {$A^n$};
            \end{tikzpicture}%
            &
            \begin{tikzpicture}[baseline=(m1)]
                \coordinate (m1) at (0,.1);
                \node[tnode] at (0,0) {$\xrightarrow[\times 3]{\beta_0}$};
            \end{tikzpicture}\\
            &
            \begin{tikzpicture}[baseline=(m1)]
                \coordinate (m1) at (0,.1);
                \node[tnode] at (0,0) {$\xrightarrow[\times 3]{\beta_0}$};
            \end{tikzpicture}%
            &
            \begin{tikzpicture}[scale=0.7, baseline=(T1), transform shape]
                \node[tree_small={T1}] (tree1)
                {
                    \node[phantom] (P1) {}; & \node (T1) {}; & \node (R1) {}; \\
                    & \node (L1) {}; \\
                };
                \draw 
                    (P1) edge[dottededge] (T1)
                    (T1) edge (R1)
                    (T1) edge (L1);
                \node[labelright={R1}] {$B^m$};
                \node[labelright={L1}] {$A^n$};
            \end{tikzpicture}
        \end{tabular}
    \end{flushleft}

\medskip
\noindent {\bf Case 3.} $A$ is a left child of a left-child ($B$) of a right-child/tree-top ($C$). There are five distinct subcases:
    \begin{enumerate}[label=3.{\arabic*})]
        \item $B \not\in \{0,-1,-2,\cdots\}$;
        \item $B=\overline{m}; \; m\geq 3$;
        \begin{enumerate}[label=3.2.{\arabic*})]
            \item $A = Xn$; $n \in \{1,2,\cdots \}$; $X\not\in \rational$;
            \item $A = X\overline{n}$; $n \in \{0,1,2,\cdots \}$; $X\not\in \rational$;
            \item $A = X(YZ)$; $YZ\not\in \integral$;
            \item $A = X0n$; $n \in \integral$; $n \not\in \integral$;
        \end{enumerate}
        \item $B=\overline{2}$;
        \item $B=\overline{1}$;
        \item $B=0$.
    \end{enumerate}
    We will prove subcases 3.1 and 3.2. Subcase 3.3 is analogous to 3.2, and subcases 3.4, 3.5 are simpler. In subcase 3.2.4, $\beta_0$ move is performed 0 from $A = X0n$, and we obtain one of other 3.2 cases.
    
    \noindent {\bf Case 3.1.} $B \not\in \{0,-1,-2,\cdots\}$:
    \begin{flushleft}

    \end{flushleft}
    {}
Results of subcases 4.2 and 4.3 after the flype differ from the result of case 4 (without flyping), but in the same way as in the subcases 3.2. and 3.3, which were shown to finally give the same result regardless if the flype was performed or not.
\end{proof}

\begin{lemma}\label{thm:canonical_algorithm}
{
\autoref{alg:treefix} transforms an algebraic tangle into its canonical representation while preserving its isotopy class.}
\end{lemma}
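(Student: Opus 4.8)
The plan is to establish the lemma in three independent pieces: isotopy-invariance of the algorithm, termination, and canonicity of the output. The first piece is immediate. Every row of \autoref{tab:traversal_moves} is, by construction, a composition of the isotopy-preserving moves catalogued in \autoref{sec:isotopic_moves}: $\alpha_\mathbb{Z}$ is a twist, $\alpha_\mathbb{Q}$ and $\beta_\mathbb{Q}$ are the rational contraction / Euclid step (an isotopy by \autoref{thm:canonical_form} together with \autoref{thm:fraction}), $\beta_0$ is bracket juggling combined with a twist (\autoref{thm:generalized_twist}), $\beta_-$ and $\beta_1$ are flypes followed by twists, and $\gamma$ is a ring move. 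Since \textsc{FixLocal} only ever applies such a move, the isotopy class of the represented tangle is preserved at every call, so the returned tree is isotopic to the input.

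Second, I would prove termination. Parts~1 and~3 are single passes over a finite tree and hence halt trivially; the content lies in the repeat--until loop of Part~2. Here I would exhibit a well-founded monovariant, taken lexicographically. The natural leading component is the crossing number, which no $\beta$-move increases (flypes preserve it, $\beta_0$ strictly decreases it by deleting a $0$-leaf, and $\beta_\mathbb{Q}$, $\alpha_\mathbb{Q}$ never increase it). When a move leaves the crossing number unchanged, I would appeal to a secondary tuple that strictly decreases: the number of non-positive integral tangles occurring as the right-child of a left-child (lowered by $\beta_-$), the number of left-leaves with fraction $\le 1$ (lowered by $\beta_\mathbb{Q}$, since it replaces $p/q$ by $\tfrac{p}{q \bmod p}$, which exceeds $1$), and a tie-breaking structural term tuned so that $\beta_1$ is strictly decreasing. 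I expect the verification here to be the main obstacle: one must check not only that each move strictly lowers this tuple, but that a move applied at a node never raises a higher-priority component at an ancestor, which is what rules out cycling under the globally rearranging flypes. The \qq{twists before flypes} priority (running $\beta_0$ ahead of $\beta_-$) is precisely what prevents $A^n0\overline{k}B^m$ from oscillating between its twisted and flyped forms, and I would encode that priority directly into the monovariant.

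Finally, I would prove canonicity by contradiction. Suppose the algorithm halts but the resulting tree still contains one of the forbidden subtrees of \autoref{fig:restricted}. The bottom-up $\alpha_\mathbb{Z}$/$\alpha_\mathbb{Q}$ pass of Part~1 guarantees that every right-leaf is integral and that every maximal rational block has collapsed to a single fraction leaf, so no node is a parent of two leaves. I would then match each remaining forbidden pattern to a move that would still apply: a left-leaf with fraction $\le 1$ under a non-trivial parent enables $\beta_\mathbb{Q}$; a left-child whose right-leaf is non-positive enables $\beta_-$; the \qq{$1$-over-non-leaf} configuration enables $\beta_1$; and a mispositioned ring enables $\gamma$. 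In each case the applicable move would have made \textsc{FixLocal} return \texttt{True}, contradicting the termination condition of the corresponding loop. The one genuinely delicate point is the interaction between the passes: I must verify that the bottom-up $\gamma$-pass of Part~3 only transposes ring subtrees horizontally and therefore preserves all the leaf-type, positivity, and fraction conditions secured in Parts~1--2, so that fixing the ring restriction last cannot reintroduce an earlier violation. Granting this, the simultaneous absence of every forbidden subtree is exactly the definition of the canonical representation, which completes the proof.
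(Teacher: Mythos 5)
Your three-part decomposition (isotopy-preservation, termination, canonicity at a fixed point) matches the shape of the paper's argument, and the first and third pieces are essentially what the paper asserts: every move in \autoref{tab:traversal_moves} is assembled from the isotopy moves of \autoref{sec:isotopic_moves}, and a tree on which no move fires cannot contain any forbidden subtree of \autoref{fig:restricted}, since each such pattern is the firing condition of some move. The genuine gap is in your termination argument, whose leading component does not behave as you claim. A $0$-leaf carries no crossings, so $\beta_0$ deleting it removes nothing; the move merges the two rightmost integrals $n$ and $m$ into $n+m$, which leaves the crossing count unchanged whenever they share a sign, so $\beta_0$ is not strictly decreasing. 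Worse, $\beta_-$ sends $(A\overline{k})B^m$ to $(\overline{A}(k-1))1\,B_x^{m-1}$: the $k$ crossings are conserved as $(k-1)+1$, but the parent's rightmost integral drops from $m$ to $m-1$, which \emph{adds} a crossing whenever $m\le 0$ (e.g.\ for $(A\overline{k})\overline{j}$ with $j\ge 0$, where $\beta_-$ is indeed the first move to fire). So the assertion that no $\beta$-move increases the crossing number is false, and the lexicographic order built on top of it is not a well-founded monovariant as stated.

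The paper terminates the second part by a different device: after each restart, the first node at which a move fires lies strictly further along the pre-order traversal, with the single exception of case~4 in the proof of \autoref{thm:algorithm_invariant_flype} (the $C=1$, $B=\overline{m}$ configuration, where a subsequent $\beta_0$ fires at an ancestor), and that exception can occur only finitely often because the tree is finite. If you want to keep a monovariant-style proof you would need to replace the crossing number by a quantity tracking traversal progress or the multiset of surviving forbidden patterns; as written, the leading component fails on concrete instances and the tie-breaking tail cannot rescue it. Your closing observation --- that the $\gamma$-pass of Part~3 only repositions ring subtrees and therefore cannot reintroduce violations cleared in Parts~1--2 --- is a point the paper leaves implicit and is worth making explicit.
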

\begin{proof}
    All restricted subtangle states (\autoref{fig:restricted}) are handeled by corresponding algorithm moves (\autoref{tab:traversal_moves}) which correct the subtree from the restricted state. The algorithm terminates when the tangle reaches its canonical form. {It is finite: the first and third parts both have only one iteration; the second part of algorithm can require multiple traversals, however, with (nearly) every iteration the traversal goes further, the only exception (case 4 in \autoref{thm:algorithm_invariant_flype}) can occur only a finite number of times, since the tree is finite. Therefore, the algorithm terminates.}
\end{proof}

\noindent {We are now ready to prove \autoref{thm:main}.}

{
\begin{proof}[Proof of the \autoref{thm:main}]
    \autoref{thm:canonical_algorithm} shows \autoref{alg:treefix} transforms a prime tangle into its canonical form. \autoref{thm:algorithm_invariant_other} and \autoref{thm:algorithm_invariant_flype} state that the algorithm is invariant to twists, flypes, ring and elementary moves .
\end{proof}
}


\section{Classification of prime tangles}\label{sec:classification}

{In addition to classifying tangles up to isotopy and equivalence, we also classify them by their orbit class.}

\begin{definition}[Tangle orbit]
    The \emph{orbit of a tangle} is a set of tangles generated by all possible compositions of $\mu$, $\nu$, $\eta$ acting on the tangle.
\end{definition}
\noindent Note that $\mu$, $\eta$, $\nu$ are generators of the $\mathbf{D}_8\!\times\!\mathbf{Z}_2$ group. This topic is extend in \autoref{sec:symmetries}.

The classifications of tangles up to 14 crossings and up to isotopy, equivalence, and orbit classes,
is presented in \autoref{tab:tangles_counted}. The classification of tangles up to 10 crossings up to the orbit classes (and their diagrams) can be found in the supplementary material \cite{supplement}.

\subsection{Generating tangles and the classification methods} 

{
In this subsection we describe how we generate tangles and what methods we used to classify them (generate the tangle tables).
}

\begin{table}[ht]
    \caption{Number of all distinct tangle orbits, divided into subgroups by number of crossings ("\#cross") and number of extra closed components ("\#closed components"). Tangles in each column/row are added up ("Orbit" column/row). Additionally tangles are counted and added up to equivalence ("Equiv.") and isotopy ("Isotopy").}
    \label{tab:tangles_counted}
    \begin{tabular}{rr@{\enspace}r@{\enspace}r@{\enspace}r@{\enspace}r@{\enspace}r@{\enspace}rr@{\enspace}r@{\enspace}r}
        \toprule
        \multirow{2}{*}{\#cross} & \multicolumn{7}{c}{\#closed components} & \multicolumn{3}{c}{Total} \\
        \cmidrule(rl){2-8} \cmidrule(rl){9-11}
        & 0 & 1 & 2 & 3 & 4 & 5 & 6 & Orbit & Equiv. & Isotopy \\
        \midrule
        0 & 1 & - & - & - & - & - & - & 1 & 1 & 2 \\
        1 & 1 & - & - & - & - & - & - & 1 & 1 & 2 \\
        2 & 1 & - & - & - & - & - & - & 1 & 2 & 4 \\
        3 & 2 & - & - & - & - & - & - & 2 & 4 & 8 \\
        4 & 4 & 2 & - & - & - & - & - & 6 & 11 & 22 \\
        5 & 11 & 3 & - & - & - & - & - & 14 & 28 & 68 \\
        6 & 30 & 12 & 2 & - & - & - & - & 44 & 87 & 236 \\
        7 & 86 & 46 & 4 & - & - & - & - & 136 & 270 & 880 \\
        8 & 267 & 152 & 37 & 3 & - & - & - & 459 & 912 & 3442 \\
        9 & 844 & 608 & 129 & 4 & - & - & - & 1585 & 3168 & 13900 \\
        10 & 2910 & 2202 & 611 & 81 & 3 & - & - & 5807 & 11595 & 57488 \\
        11 & 10102 & 8742 & 2803 & 286 & 5 & - & - & 21938 & 43864 & 242206 \\
        12 & 37115 & 34853 & 11913 & 2021 & 178 & 4 & - & 86084 & 172091 & 1035696 \\
        13 & 139778 & 141102 & 56073 & 9831 & 573 & 5 & - & 347362 & 694690 & 4482956 \\
        14 & 539872 & 592627 & 249371 & 50979 & 5961 & 328 & 4 & 1439142 & 2877981 & 19602964 \\
        \midrule
        Orbit & 731024 & 780349 & 320943 & 63205 & 6720 & 337 & 4 & 1902582 & & \\
        Equiv. & 1461922 & 1560597 & 641753 & 126347 & 13408 & 670 & 8 & & 3804705 & \\
        Isotopy & 10192496 & 10480208 & 3994874 & 705590 & 64048 & 2642 & 16 & & & 25439874 \\
        \bottomrule
    \end{tabular}
\end{table}

\paragraph{Generation of alternating tangles}
{
The initial set of all alternating algebraic tangles up to $N$ crossings is generated using a standard integer partition algorithm, that is, generate sequences $(p_i \in \mathbb{N}_0)_i$, satisfying $\sum_{i}p_i = N$, without consecutive zeroes, and nest the partitions in all possible ways.
}
Using integer partitions, our goal is to generate all tangles up to $N$ crossings and up to $\mu$, $\eta$ reflections. These reflected tangles can be obtained by changing signs and/or by adding $0$ at the end of existing tangles (see \eqref{eq:refl}). Such generated non-negative partitions represent alternating tangles in multiplication form. Not all non-negative partitions are needed, since many representations are redundant:
\begin{itemize}
    \item zeroes can appear only at the end of the bracket -- zeroes at the beginning of bracket generate non-prime tangles, which we omit in the classification, and zeroes inside the bracket are redundant: $n0m = (n\!+\!m)$.
    \item consecutive opening brackets are redundant (left associativity of multiplication): $((A \cdots B) C \cdots) = (A \cdots B C \cdots)$
    \item first integral tangle after the opening bracket is ambiguous: $(1 \cdots) = (1 0 \cdots) \implies (1 n \cdots) = \left((n\!+\!1) \cdots\right)$.
\end{itemize}
Next, we remove tangles that end with 0. At this stage, 1,527,810 tangles up to 14 crossings are obtained.

\FloatBarrier

\paragraph{Generation of non-alternating tangles}
Tangles containing $0$ at the end of any bracket are used as a template for generation of non-alternating tangles. The $0$'s inside tangles are then replaced with negative integers $-1, \ldots, -k$, where $k$ is the level of nestedness, defined as the number of closing brackets to the right of the $0$.
At this stage, 25,267,083 tangles are obtained.

\paragraph{Determination of tangles' minimal set and symmetries}
The main algorithm translates each tangle into a binary tree. We perform the following set of operations on each tangle in a binary tree form: 
\begin{itemize}
    \item generation of the remaining 15 tangles in each the tangle's orbit, 
    \item transforming all 16 tangles into their canonical forms, and determination of a tangle symmetry group (by equality of canonical forms),
    \item all 16 tangles are minimized to determine the number of crossings in their minimal representations, {here the minimization only of one of the orbits' representative is sufficient}.
\end{itemize}
\medskip

If a tangle is found in the orbit of any other tangle, then it is rejected.
{In each orbit, we choose the lexicographically minimal tangle as the representative.}
Finally, we obtain 1,902,582 unique tangle representatives and their symmetry groups, which translates into 3,804,705 unique tangles up to equivalence, and 25,439,874 unique tangles up to isotopy.

\paragraph{Minimization algorithm}
{In order to find the tangle form with a minimal number of crossings, we first modify the tangle in its canonical form by twisting the negative rightmost tangles, to incorporate them into rational tangles (this way the number of crossings of each rational tangle does not change, only their sign, but the number of crossings of the rightmost integral tangle is lowered). 
Next, we search for possible flypes that could lower the number of crossings (between two right leaves with opposite signs, which are neighbors or separated by one $0$ right-leaf) and perform these flypes on separate copies of the tangle. Then we recursively repeat this process until no possibilities are left.
}

\begin{figure}[ht]
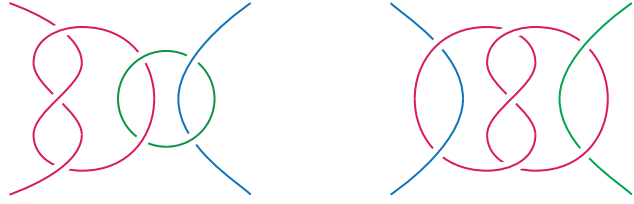

    \centering
    \includegraphics[scale=0.5,page=1]{tanglesx.pdf} \qquad \qquad
    \includegraphics[scale=0.5,page=2]{tanglesx.pdf}
    \caption{Example of mutant tangles: left $3(2(20))$, right: $2(3(20))$.}
    \label{fig:mutants}
\end{figure}

\subsection{Distinguishing mutant tangles}
{
Identifying and distinguishing mutant knots  presents significant challenges in knot theory, as they share several of the same invariants (including the hyperbolic volume and the HOMFLYPT polynomial), similarly, this difficulty extends to tangles. Notable examples are tangles with 7-crossings, which are missing in the table of algebraic tangles in \cite{moriuchi2008enumeration}, where mutants are present at the very end of the classification. The methods used in \cite{moriuchi2008enumeration} could not distinguish these 4 pairs, since after closures (numerator, denominator, double) they form the same links.

In \autoref{tab:mutants}, we list pairs of mutants, and in \autoref{fig:mutants} we show diagrams of one of these pairs. Moreover, missing tangles have higher symmetry and belong to $z \rho$ group, when their mutants belong to one of $\rho$ groups (see next section).
}

\begin{table}[t]
    \caption{4 tangles missing from classification \cite{moriuchi2008enumeration} are mutants of other classified tangles. Moriuchi used Conway's comma notation $(A,B,C) = A0\!+\!B0\!+\!C0 = A(B(C0))$.}\label{tab:mutants}
    \centering
    \begin{tabular}{cccc}
        \toprule
        \multicolumn{2}{c}{In Moriuchi's table} & \multicolumn{2}{c}{Missing mutant tangle}  \\
        \cmidrule(rl){1-2} \cmidrule(rl){3-4}
        our notation & Conway's notation & our notation & Conway's notation  \\
        \midrule
        $3(2(20))$ & $(3,2,2)$ & $2(3(20))$ & $(2,3,2)$\\
        $3(2(\overline{2}0))$ & $(3,2,\overline{2})$ & $2(3(\overline{2}0))$ & $(2,3,\overline{2})$ \\
        $3(\overline{2}(\overline{2}0))$ & $(3,\overline{2},\overline{2})$ & $\overline{2}(3(\overline{2}0))$ & $(\overline{2}, 3,\overline{2})$ \\
        $21(2(20))$ & $(21,2,2)$ & $2(21(20))$ & $(2,21,2)$ \\
         \bottomrule
    \end{tabular}
\end{table}


\section{Tangle symmetry groups}\label{sec:symmetries}
Operators $\nu, \eta, \mu$ from \autoref{def:transformations} are generators of $\mathbf{D}_8\!\times\!\mathbf{Z}_2$ group:
\begin{align}
    \mathbf{D}_8\!\times\!\mathbf{Z}_2 = \langle \mu, \nu, \eta \mid \mu^2\!=\!\nu^4\!=\!\eta^2\!=\!e,  [\nu,\eta]\!=\!\nu^2, [\nu,\mu]\!=\![\eta,\mu]\!=\!0 \rangle
\end{align}
which is the symmetry group of the Conway sphere. \autoref{fig:D8_Z2_representation} presents tangles related by $\nu, \eta, \mu$ operations (orbits), which in general generate the $\mathbf{D}_8\!\times\!\mathbf{Z}_2$ group. Furthermore, we study symmetry groups of tangles in terms of subgroups of $\mathbf{D}_8\!\times\!\mathbf{Z}_2$.
\autoref{fig:subgroups} presents the lattice diagram of subgroups of $\mathbf{D}_8\!\times\!\mathbf{Z}_2$.

\begin{figure}[ht!]
    \centering
    \begin{tikzpicture}[scale=1.4, transform shape]
        \node[box, minimum size=1cm] at (0,0) {\rotatebox{180}{R}};
        \node[mirrorbox, minimum size=1cm] at (1,0) {\reflectbox{\rotatebox{180}{\textbf{R}}}};
        \node[mirrorbox, minimum size=1cm] at (0,-1) {\reflectbox{\rotatebox{0}{\textbf{R}}}};
        \node[box, minimum size=1cm] at (1,-1) {R};
        
        \node[box, minimum size=1cm] at (0,-3) {{\rotatebox{-90}{{R}}}};
        \node[mirrorbox, minimum size=1cm] at (1,-3) {\reflectbox{\rotatebox{-90}{\textbf{R}}}};
        \node[mirrorbox, minimum size=1cm] at (0,-4) {\reflectbox{\rotatebox{90}{\textbf{R}}}};
        \node[box, minimum size=1cm] at (1,-4) {{\rotatebox{90}{{R}}}};

        \node[mirrorbox, minimum size=1cm] at (3,0) {{\rotatebox{90}{\textbf{R}}}};
        \node[box, minimum size=1cm] at (4,0) {\reflectbox{\rotatebox{90}{{R}}}};
        \node[box, minimum size=1cm] at (3,-1) {\reflectbox{\rotatebox{-90}{{R}}}};
        \node[mirrorbox, minimum size=1cm] at (4,-1) {{\rotatebox{-90}{\textbf{R}}}};

        \node[mirrorbox, minimum size=1cm] at (3,-3) {{\textbf{R}}};
        \node[box, minimum size=1cm] at (4,-3) {\reflectbox{\rotatebox{0}{{R}}}};
        \node[box, minimum size=1cm] at (3,-4) {\reflectbox{\rotatebox{180}{{R}}}};
        \node[mirrorbox, minimum size=1cm] at (4,-4) {{\rotatebox{180}{\textbf{R}}}};

        \node at (2,-1.8) {\footnotesize $\mu$};
        \node at (2,-.87) {\footnotesize $\eta$};
        \node at (2,-3.15) {\footnotesize $\eta$};
        \node at (1.2,-2) {\footnotesize $\mu \eta$};
        \node at (2.8,-2) {\footnotesize $\mu \eta$};
        \node at (.5,.72) {\footnotesize $\rho_y$};
        \node at (3.5,.72) {\footnotesize $\rho_y$};
        \node at (.5,-4.72) {\footnotesize $\rho_y$};
        \node at (3.5,-4.72) {\footnotesize $\rho_y$};
        \node at (-.76,-.5) {\footnotesize $\rho_x$};
        \node at (-.76,-3.5) {\footnotesize $\rho_x$};
        \node at (4.78,-.5) {\footnotesize $\rho_x$};
        \node at (4.78,-3.5) {\footnotesize $\rho_x$};
        \draw[{Latex}-{Latex}] (1.5,-1.5) -- (2.5,-2.5);
        \draw[{Latex}-{Latex}] (1.5,-2.5) -- (2.5,-1.5);
        \draw[{Latex}-{Latex}] (1.5,-1) -- (2.5,-1);
        \draw[{Latex}-{Latex}] (1.5,-3) -- (2.5,-3);
        \draw[{Latex}-{Latex}] (1,-1.5) -- (1,-2.5);
        \draw[{Latex}-{Latex}] (3,-1.5) -- (3,-2.5);
        \draw[{Latex}-{Latex}] (.2,.6) -- (.8,.6);
        \draw[{Latex}-{Latex}] (.2,-4.6) -- (.8,-4.6);
        \draw[{Latex}-{Latex}] (3.2,.6) -- (3.8,.6);
        \draw[{Latex}-{Latex}] (3.2,-4.6) -- (3.8,-4.6);
        \draw[{Latex}-{Latex}] (-.6,-.8) -- (-.6,-.2);
        \draw[{Latex}-{Latex}] (-.6,-3.8) -- (-.6,-3.2);
        \draw[{Latex}-{Latex}] (4.6,-.8) -- (4.6,-.2);
        \draw[{Latex}-{Latex}] (4.6,-3.8) -- (4.6,-3.2);
        \node[anchor=north] at (2,2) {\small Orbit};
        \draw (-2.2,2) -- (6.2,2) -- (6.2,-5.6) -- (-2.2,-5.6) -- cycle;
        \node[anchor=west] at (-2,-2) {\rotatebox{90}{\footnotesize Equivalence}};
        \node[anchor=east] at (6,-2) {\rotatebox{-90}{\footnotesize Equivalence}};
        \draw[dashed] (-2,1.4) -- (1.7,1.4) -- (1.7,-5.4) -- (-2,-5.4) -- cycle;
        \draw[dashed] (2.3,1.4) -- (6,1.4) -- (6,-5.4) -- (2.3,-5.4) -- cycle;
        \node[anchor=north west] at (-1.2,1.2) {$\frac{p}{q}$};
        \draw[dotted] (1.5,1.2) -- (-1.2,1.2) -- (-1.2,-1.5) -- (1.5,-1.5) -- cycle;
        \node[anchor=south west] at (-1.2,-5.2) {$-\frac{q}{p}$};
        \draw[dotted] (1.5,-5.2) -- (-1.2,-5.2) -- (-1.2,-2.5) -- (1.5,-2.5) -- cycle;
        \node[anchor=north east] at (5.2,1.2) {$\frac{q}{p}$};
        \draw[dotted] (2.5,1.2) -- (5.2,1.2) -- (5.2,-1.5) -- (2.5,-1.5) -- cycle;
        \node[anchor=south east] at (5.2,-5.2) {$-\frac{p}{q}$};
        \draw[dotted] (2.5,-5.2) -- (5.2,-5.2) -- (5.2,-2.5) -- (2.5,-2.5) -- cycle;
        \draw[-{Latex}] (.2,-2.5) to [out=90 ,in=-90] (.6,-1.5);
        \draw[-{Latex}] (.2,-1.5) to [out=-90 ,in=90] (.6,-2.5);
        \node at (.2,-2) {\footnotesize $\nu$};
        \draw[-{Latex}] (3.8,-2.5) to [out=90 ,in=-90] (3.4,-1.5);
        \draw[-{Latex}] (3.8,-1.5) to [out=-90 ,in=90] (3.4,-2.5);
        \node at (3.8,-2) {\footnotesize $\nu$};
    \end{tikzpicture}
    \caption{{The orbit of a tangle $R$. The tangle $R$ and its transformations are represented by two-faced square tiles. The orbit of a tangle collects all tangles obtainable by $\mu$, $\nu$ and $\eta$ operations. Tangles in the orbits have up to 4 distinct fraction values, which are their own inverses and/or opposites: $\frac{p}{q}$, $\frac{q}{p}$, $-\frac{p}{q}$, $-\frac{q}{p}$. \newline
    Equivalent tangles are related by rotations, $\nu$ and $\mu\eta$, and their compositions, $\rho_x=\mu\eta\nu^3$, $\rho_y=\mu\eta\nu$, $\rho_z=\nu^2$. If a tangle has no mirror symmetries, then its orbit splits into two sets of equivalent tangles.}} \label{fig:D8_Z2_representation}
\end{figure}
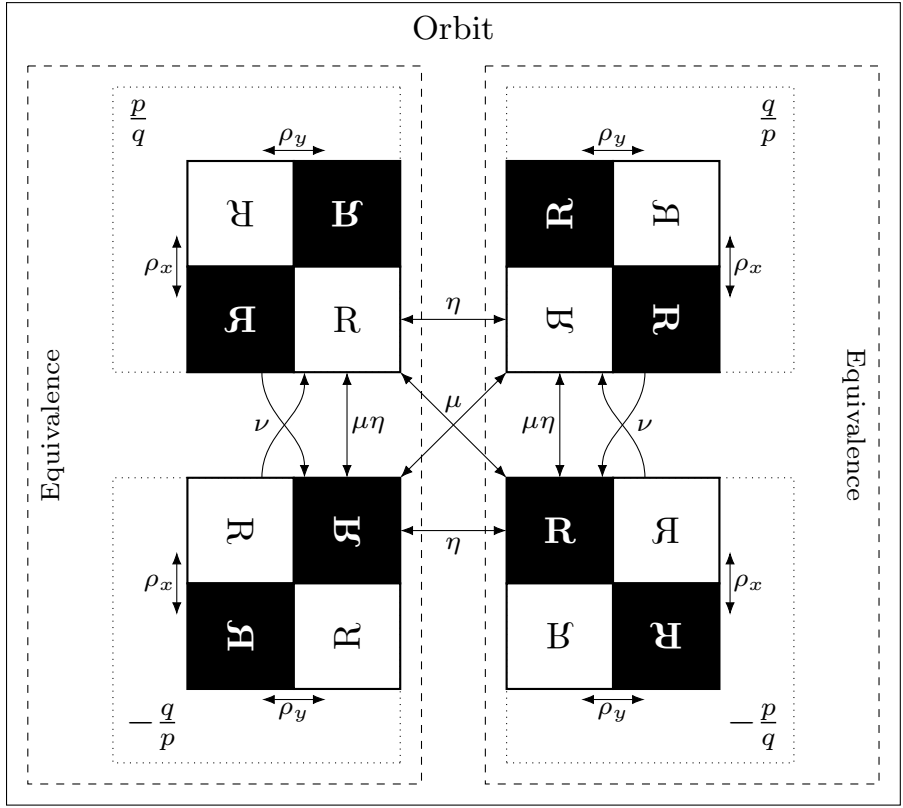

Some subgroups of $\mathbf{D}_8\!\times\!\mathbf{Z}_2$ cannot be the symmetry group of a tangle (e.g. $\mathbf{D}_8\!\times\!\mathbf{Z}_2$ itself), since no tangle can be invariant to $\nu$, $\nu^3$, $\mu\eta$, and $\mu\eta\nu^2$ operations, which is stated in \autoref{thm:symmetries}. All other symmetry groups have been observed and they are listed in \autoref{tab:symmetries} together with operations they are invariant to. 
The remaining \tables{tab:tangles_sym_rot} -- \ref{tab:tangles_sym_mir_0} count tangles up to 14 crossings split into subgroups based on the numbers of crossings and tangle symmetry groups. \tables{tab:tangles_sym_rot_0} and \ref{tab:tangles_sym_mir_0} count only tangles with no additional closed components.

\autoref{tab:tangles_sym_mir} shows that tangles with mirror symmetries are very rare. Note that, while in general tangles generated by $\nu$, $\mu$ and $\eta$ create two groups of equivalent tangles, if any mirror symmetry is present, all tangles generated by $\nu$, $\mu$ and $\eta$, are equivalent.
{Interestingly, mirror symmetry groups are split between X-tangles and H/V-tangles (\tables{tab:symmetries}, \ref{tab:tangles_sym_mir}, and \ref{tab:tangles_sym_mir_0}).} Moreover, presence of mirror symmetries for a specific number of crossings (in minimal representation) is restricted to either X-tangles or H/V-tangle (\autoref{tab:tangles_sym_mir}) -- at least for tangles up to 14 crossings. {It remains an open question if this is generally true for all algebraic tangles -- with any number of crossings.}

\begin{figure}[ht!]
    \centering
    \begin{tikzpicture}[scale=6]
        \node[node] at (0,0,0) (D8Z2) {$\mathbf{D}_8\!\times\!\mathbf{Z}_2$};
        \node[node] at (1,0,0) (D8) {$\mathbf{D}_8$};
        \node[node] at (0,0,1) (Z4Z2) {$\mathbf{Z}_4\!\times\!\mathbf{Z}_2$};
        \node[node] at (1,0,1) (Z4) {$\mathbf{Z}_4$};
        \node[node] at (0,-.6,0) (Z2Z2Z2) {$\mathbf{Z}_2\!\times\!\mathbf{Z}_2\!\times\!\mathbf{Z}_2$};
        \node[node] at (1,-.6,0) (Z2Z2r) {$\mathbf{Z}_2\!\times\!\mathbf{Z}_2$};
        \node[node] at (0,-.6,1) (Z2Z2l) {$\mathbf{Z}_2\!\times\!\mathbf{Z}_2$};
        \node[node] at (1,-.6,1) (Z2c) {$\mathbf{Z}_2$};
        \node[node] at (0,-1.2,0) (Z2Z2c) {$\mathbf{Z}_2\!\times\!\mathbf{Z}_2$};
        \node[node] at (1,-1.2,0) (Z2r) {$\mathbf{Z}_2$};
        \node[node] at (0,-1.2,1) (Z2l) {$\mathbf{Z}_2$};
        \node[node] at (1,-1.2,1) (Z1) {$\mathbf{Z}_1$};
        \begin{scope}
            \tikzstyle{gen} = [scale=.75,align=left,inner sep=0, outer sep=0, anchor=south]
            \node[gen, below right = .3 and .3 of D8Z2.center] {
                \st{$\langle \mu, \nu, \eta \rangle$}};
            \node[gen, below right = .3 and .3 of D8.center] {
                $\langle \eta, \mu\nu \rangle$\\
                \st{$\langle \eta, \nu \rangle$}\\
                \st{$\langle \mu\eta, \nu \rangle$}\\
                \st{$\langle \mu\eta, \mu\nu \rangle$}};
            \node[gen, below left = .3 and .3 of Z4Z2.center] {
                \st{$\langle \mu, \nu \rangle$}};
            \node[gen, below right = -.12 and .3 of Z4.center] {
                {$\langle \mu\nu \rangle$}\\
                \st{$\langle \nu \rangle$}}; 
            \node[gen, below right = .3 and .3 of Z2Z2Z2.center] {
                {$\langle \mu, \nu^2, \eta\nu \rangle$}\\
                \st{$\langle \mu, \nu^2, \eta \rangle$}};
            \node[gen, below right = .3 and .3 of Z2Z2r.center] {
                {$\langle \nu^2, \eta\nu \rangle$}\\
                {$\langle \nu^2, \eta \rangle$}\\
                {$\langle \nu^2, \mu\eta\nu \rangle$}\\
                \st{$\langle \nu^2, \mu\eta \rangle$}};
            \node[gen, below left = .3 and .3 of Z2Z2l.center] {
                $\langle \mu, \nu^2 \rangle$};
            \node[gen, below right = -.12 and .3 cm of Z2c.center] {
                $\langle \nu^2 \rangle$}; 
            \node[gen, below right = .3 and .3 of Z2Z2c.center] {
                {$\langle \mu, \eta\nu \rangle$}
                {$\langle \mu, \eta\nu^3 \rangle$}\\
                {$\langle \mu\nu^2, \eta\nu \rangle$}
                {$\langle \mu\nu^2, \eta\nu^3 \rangle$}\\
                \st{$\langle \mu, \eta \rangle$}
                \st{$\langle \mu, \eta\nu^2 \rangle$}\\
                \st{$\langle \mu\nu^2, \eta \rangle$}
                \st{$\langle \mu\nu^2, \eta\nu^2 \rangle$}\\};
            \node[gen, below right = -.1 and .3 of Z2r.center] {
                {$\langle \eta \rangle$}\\
                {$\langle \eta\nu^2 \rangle$}\\
                {$\langle \eta\nu^3 \rangle$}\\
                {$\langle \eta\nu \rangle$}\\
                {$\langle \mu\eta\nu^3 \rangle$}\\
                {$\langle \mu\eta\nu \rangle$}\\
                \st{$\langle \mu\eta \rangle$}\\
                \st{$\langle \mu\eta\nu^2 \rangle$}};
            \node[gen, below left = -.44 and .3 of Z2l.center] {
                $\langle \mu \rangle$\\
                $\langle \mu\nu^2 \rangle$};
            \node[gen, below right = -.12 and .3 cm of Z1.center] {
                $\langle e \rangle$};
        \end{scope}
        \draw (Z1) -- (Z2r);
        \draw (Z1) -- (Z2l);
        \draw (Z1) -- (Z2c);
        \draw (Z2c) -- (Z2Z2r);
        \draw (Z2c) -- (Z2Z2l);
        \draw (Z2c) -- (Z4);
        \draw (Z4) -- (Z4Z2);
        \draw (Z4) -- (D8);
        \draw (Z2r) -- (Z2Z2r);
        \draw (Z2r) -- (Z2Z2c);
        \draw (Z2Z2r) -- (Z2Z2Z2);
        \draw (Z2Z2r) -- (D8);
        \draw (D8) -- (D8Z2);
        \draw (Z2l) -- (Z2Z2c);
        \draw (Z2l) -- (Z2Z2l);
        \draw (Z2Z2l) -- (Z2Z2Z2);
        \draw (Z2Z2l) -- (Z4Z2);
        \draw (Z4Z2) -- (D8Z2);
        \draw (Z2Z2c) -- (Z2Z2Z2);
        \draw (Z2Z2Z2) -- (D8Z2);
    \end{tikzpicture}
    \caption{The lattice of subgroups of $\mathbf{D}_8\!\times\!\mathbf{Z}_2$. Next to each group there are listed possible sets of generators. Sets of generators which are \st{crossed out} cannot generate the tangle symmetry group, since they include at least one of $\nu, \nu^3, \mu\eta, \mu\eta\nu^2$ (\autoref{thm:symmetries}). All remaining generating sets create symmetry groups, which were all observed in the tangles.}
    \label{fig:subgroups}
\end{figure}
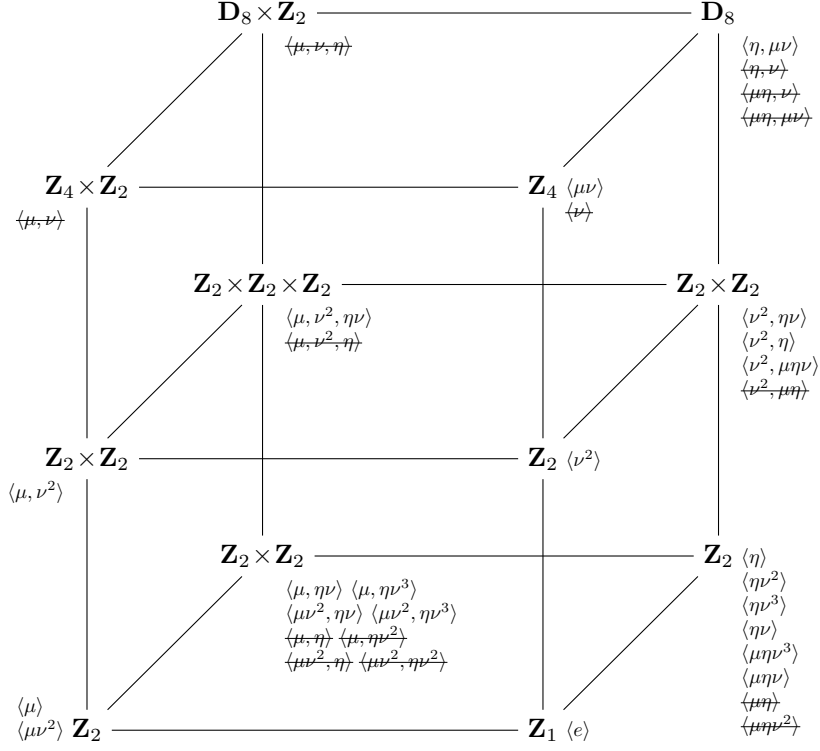

\newpage

\begin{theorem}\label{thm:symmetries}
    No tangle is isotopic to itself after $\nu$, $\nu^3$, $\mu\eta$ or $\mu\eta\nu^2$ transformations.
\end{theorem}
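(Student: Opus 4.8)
The plan is to use the fraction $\mathrm{Frac}$ as an isotopy invariant and to compute how each of the four transformations acts on it. By \autoref{thm:fraction_rotations} every rotation about the $x$, $y$, $z$ axes leaves the fraction unchanged, and by \autoref{thm:fraction_mirror} the mirror $\mu$ negates it, $\mathrm{Frac}(\mu A)=-\mathrm{Frac}(A)$. The reflection $\eta$ inverts the fraction: since $\eta(A)\equiv A0$ by \autoref{eq:refl}, \autoref{def:fraction_algebraic} gives $\mathrm{Frac}(\eta A)=\mathrm{Frac}(A0)=\mathrm{Frac}(0)+1/\mathrm{Frac}(A)=1/\mathrm{Frac}(A)$. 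Finally, using $\rho_x=\mu\nu\eta$ from \autoref{def:transformations} to write $\nu=\mu\,\rho_x\,\eta$ and composing the three rules (reading right to left: $\eta$ inverts, $\rho_x$ preserves, $\mu$ negates) yields $\mathrm{Frac}(\nu A)=-1/\mathrm{Frac}(A)$.

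First I would record the fraction action of the four transformations named in the statement. Since $\nu^2=\rho_z$ is a rotation and hence preserves the fraction, $\nu^3$ acts exactly as $\nu$, and the extra $\nu^2$ in $\mu\eta\nu^2$ contributes nothing; composing with the rules above then gives the single common formula $\mathrm{Frac}(\nu A)=\mathrm{Frac}(\nu^3 A)=\mathrm{Frac}(\mu\eta A)=\mathrm{Frac}(\mu\eta\nu^2 A)=-1/\mathrm{Frac}(A)$. In fact a short check of all sixteen elements of $\mathbf{D}_8\!\times\!\mathbf{Z}_2$ shows these are precisely the transformations whose induced action on the fraction is $F\mapsto -1/F$, which explains why exactly these four are singled out.

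Next I would derive the contradiction. Suppose some tangle $A$ were isotopic to $g(A)$ for one of $g\in\{\nu,\nu^3,\mu\eta,\mu\eta\nu^2\}$. Because the fraction is an isotopy invariant, this forces $\mathrm{Frac}(A)=\mathrm{Frac}(g(A))=-1/\mathrm{Frac}(A)$. Writing $F=\mathrm{Frac}(A)\in\mathbb{Q}\cup\{\infty\}$, the equation $F=-1/F$ has no solution: for finite nonzero $F$ it gives the impossible $F^2=-1$ over $\mathbb{Q}$, while $F=0$ gives $-1/F=\infty$ and $F=\infty$ gives $-1/F=0$, so $F\neq -1/F$ in every case. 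Hence no such isotopy can exist, which is the claim.

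The one delicate point is the justification that $\mathrm{Frac}$ is a genuine invariant under arbitrary isotopy, as opposed to being merely invariant under the twist, flype, ring, and elementary moves verified in \autoref{thm:fraction_flype}. I expect this to be the main obstacle, and I would address it either by invoking the classical fact that Conway's tangle fraction is unchanged by the Reidemeister moves of \autoref{thm:reidemeister}, or by arguing within the paper's framework that the listed moves, together with the rotations and mirrors already handled, account for the relevant isotopies. Once that is in place the argument is pure bookkeeping; the substantive content is simply that the Möbius map $F\mapsto -1/F$ is fixed-point-free on $\mathbb{Q}\cup\{\infty\}$.
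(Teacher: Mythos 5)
Your proposal is correct and follows essentially the same route as the paper: compute the action of $\mu$, $\eta$, and the rotations on $\mathrm{Frac}$, deduce that all four listed transformations send $w$ to $-1/w$, and conclude from the fixed-point-freeness of $w\mapsto -1/w$ on $\mathbb{Q}\cup\{\infty\}$. The one caveat you flag — that invariance of the generalized fraction under arbitrary isotopy rests on the moves of \autoref{thm:fraction_flype} generating isotopy (cf.\ \autoref{conjecture}) — is a gap the paper's own proof shares rather than resolves, so your treatment is no weaker than the original.
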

\begin{proof}
    The proof follows from the properties of the fraction (\autoref{def:fraction_algebraic} and \autoref{thm:fraction_rotations},\ref{thm:fraction_mirror}). Let us assume that the fraction of tangle $A$ is $w$. From $\rho_x\!\defeq\! \mu\nu\eta$, $\rho_z\!\defeq\!\nu^2$, $\eta R\!\defeq\!R0$, $\mu R \!\defeq\! \overline{R}$ we obtain:
    \begin{enumerate}
        \item[1)] $\mathrm{Frac}(\mu\nu\eta R) = 
            \mathrm{Frac}(\nu^2 R) = w$,
        \item[2)] $\mathrm{Frac}(\mu R) = -w$,
        \item[3)] $\mathrm{Frac}(\eta R) = 1/w$.
    \end{enumerate}
    1)+2)+3) imply that $\mathrm{Frac}(\nu R) = \mathrm{Frac}(\nu^3 R) = \mathrm{Frac}(\mu\eta R) = \mathrm{Frac}(\mu\eta\nu^2 R) = -1/w$.
    {
    Invariance to  $\nu, \nu^3, \mu\eta,$ or $\mu\eta\nu^2$ implies that $w = -1/w$, which is a contradiction in $\mathbb{Q} \cup \{ \infty \}$.}
\end{proof}

\FloatBarrier

\begin{table}[ht]
\caption{Possible symmetry groups of tangles. For each symmetry group, the invariant transformations are listed. Each group has 1, 2, 4 or 8 invariant operations (including $\text{Id}$, which is not listed). Invariance to $\mu\nu^{3}$ is not listed, since it is an inverse of $\mu\nu$, and other listed transformations are their own inverses. Some equivalence groups of tangles collect tangles from two symmetry groups -- such conjugated groups are marked with curly braces. {In the first column an abbreviation is assigned to each group (or to a pair of conjugated groups).}}\label{tab:symmetries}
\begin{tabular}{l@{$\,$}l@{$\,$}l@{$\quad$}l@{$\quad$}lc@{$\quad$}c@{$\quad$}c@{$\quad$}c@{$\quad$}c@{$\quad$}c@{$\quad$}c@{$\quad$}c@{$\quad$}c@{$\quad$}cc}
\toprule
\multicolumn{5}{c}{Symmetry group} & \multicolumn{10}{c}{Is invariant to?} & Type \\
\cmidrule(rl){1-5} \cmidrule(rl){6-15} \cmidrule(rl){16-16}
\multicolumn{3}{l}{Abb.} & Type & Generators & 
$\rho_z$ & $\rho_y$ & $\rho_x$ & $\mu\rho_z$ & $\mu\rho_y$ & $\mu\rho_x$ & $\mu$ & $\eta$ & $\eta\rho_z$ & $\mu\nu$ & VHX\\
\bottomrule
\toprule
$\mathfrak{1}$ & & & $\mathbf{D}_8$ & $\langle\eta,\mu\nu\rangle$ &
$\checkmark$ & $\checkmark$ & $\checkmark$ & & & & & $\checkmark$ & $\checkmark$ & $\checkmark$ & \phantom{VH}X\\
$\overline{\nu}$ & & & $\mathbf{Z}_4$ & $\langle\mu\nu\rangle$ & 
$\checkmark$ & & & & & & & & & $\checkmark$ & \phantom{VH}X\\
$\eta z$ & & & $\mathbf{Z}_2\!\times\!\mathbf{Z}_2$ & $\langle\eta,\nu^2\rangle$ & 
$\checkmark$ & & & & & & & $\checkmark$ & $\checkmark$ & & \phantom{VH}X\\
\multirow{2}{*}{$\eta$} & \multirow{2}{*}{$\biggl\{$} & $\diagdown$ & $\mathbf{Z}_2$ & $\langle\eta\rangle$ &
& & & & & & & $\checkmark$ & & & \phantom{VH}X\\
& & $\diagup$ & $\mathbf{Z}_2$ & $\langle\eta\nu^2\rangle$ &
& & & & & & & & $\checkmark$ & & \phantom{VH}X\\
\midrule

$\mathfrak{0}$ & & & $\mathbf{Z}_2\!\times\!\mathbf{Z}_2\!\times\!\mathbf{Z}_2$ & $\langle\mu,\nu^2,\eta\nu\rangle$ & 
$\checkmark$ & $\checkmark$ & $\checkmark$ & $\checkmark$ & $\checkmark$ & $\checkmark$ & $\checkmark$ & & & & VH\phantom{X}\\
$\mu z$ & & & $\mathbf{Z}_2\!\times\!\mathbf{Z}_2$ & $\langle\mu,\nu^2\rangle$ & $\checkmark$ & & & $\checkmark$ & & & $\checkmark$ & & & & VH\phantom{X}\\
\multirow{2}{*}{$\mu\rho$} & \multirow{2}{*}{$\biggl\{$} & $\mu y$ & $\mathbf{Z}_2\!\times\!\mathbf{Z}_2$ & $\langle\mu,\eta\nu\rangle$ & 
& $\checkmark$ & & & $\checkmark$ & & $\checkmark$ & & & & VH\phantom{X}\\
& & $\mu x$ & $\mathbf{Z}_2\!\times\!\mathbf{Z}_2$ & $\langle\mu,\eta\nu^3\rangle$ & 
& & $\checkmark$ & & & $\checkmark$ & $\checkmark$ & & & & VH\phantom{X}\\
$\overline{xy}$ & & & $\mathbf{Z}_2\!\times\!\mathbf{Z}_2$ & $\langle\eta\nu,\nu^2\rangle$ & 
$\checkmark$ & & & & $\checkmark$ & $\checkmark$ & & & & & VH\phantom{X}\\
\multirow{2}{*}{$\overline{z\rho}$} & \multirow{2}{*}{$\biggl\{$} & $\overline{zx}$ & $\mathbf{Z}_2\!\times\!\mathbf{Z}_2$ & $\langle\mu\nu^2,\eta\nu^3\rangle$ & 
& $\checkmark$ & & $\checkmark$ & & $\checkmark$ & & & & & VH\phantom{X}\\
& & $\overline{zy}$ & $\mathbf{Z}_2\!\times\!\mathbf{Z}_2$ & $\langle\mu\nu^2,\eta\nu\rangle$ & 
& & $\checkmark$ & $\checkmark$ & $\checkmark$ & & & & & & VH\phantom{X}\\
$\overline{z}$ & & & $\mathbf{Z}_2$ & $\langle\mu\nu^2\rangle$ & 
& & & $\checkmark$ & & & & & & & VH\phantom{X}\\
\multirow{2}{*}{$\overline{\rho}$} & \multirow{2}{*}{$\biggl\{$} & $\overline{y}$ & $\mathbf{Z}_2$ & $\langle\eta\nu\rangle$ & 
& & & & $\checkmark$ & & & & & & VH\phantom{X}\\
& & $\overline{x}$ & $\mathbf{Z}_2$ & $\langle\eta\nu^3\rangle$ & 
& & & & & $\checkmark$ & & & & & VH\phantom{X}\\
$\mu$ & & & $\mathbf{Z}_2$ & $\langle\mu\rangle$ & 
& & & & & & $\checkmark$ & & & & VH\phantom{X}\\
\midrule

$z\rho$ & & & $\mathbf{Z}_2\!\times\!\mathbf{Z}_2$ & $\langle\nu^2,\mu\eta\nu\rangle$ & 
$\checkmark$ & $\checkmark$ & $\checkmark$ & & & & & & & & VHX\\
$z$ & & & $\mathbf{Z}_2$ & $\langle\nu^2\rangle$ & 
$\checkmark$ & & & & & & & & & & VHX\\
\multirow{2}{*}{$\rho$} & \multirow{2}{*}{$\biggl\{$} & $y$ & $\mathbf{Z}_2$ & $\langle\mu\eta\nu\rangle$ & 
& $\checkmark$ & & & & & & & & & VHX\\
& & $x$ & $\mathbf{Z}_2$ & $\langle\mu\eta\nu^3\rangle$ & 
& & $\checkmark$ & & & & & & & & VHX\\
$e$ & & & $\mathbf{Z}_1$ & $\langle e\rangle$ & 
& & & & & & & & & & VHX\\
\bottomrule
\end{tabular}
\end{table}

\begin{table}[]
    \caption{Number of all tangle orbits with non-mirror symmetries split by the number of crossings (\qq{\#cross}). The properties of each symmetry group are listed in \autoref{tab:symmetries}. Tangles in these group can be of any V-, H-, or X-type. Tangles in each column/row are summed up (\qq{Orbit} column/row), additionally, tangles are counted and summed up to equivalence (”Equiv.”) and isotopy (”Isotopy”).}
    \label{tab:tangles_sym_rot}
    \begin{tabular}{rr@{\quad}r@{\quad}r@{\quad}rr@{\quad}r@{\quad}r}
        \toprule
        \multirow{2}{*}{\#cross} & \multicolumn{4}{c}{Non-mirror symmetry group} & \multicolumn{3}{c}{Total} \\
        \cmidrule(rl){2-5} \cmidrule(rl){6-8}
        & $z\rho$ & $z$ & $\rho$ & $e$ & Orbit & Equiv. & Isotopy  \\
        \midrule
        0 & - & - & - & - & - & - & - \\
        1 & - & - & - & - & - & - & - \\
        2 & 1 & - & - & - & 1 & 2 & 4 \\
        3 & 2 & - & - & - & 2 & 4 & 8 \\
        4 & 5 & - & - & - & 5 & 10 & 20 \\
        5 & 11 & - & 3 & - & 14 & 28 & 68 \\
        6 & 28 & 4 & 11 & - & 43 & 86 & 232 \\
        7 & 61 & 15 & 52 & 6 & 134 & 268 & 876 \\
        8 & 154 & 64 & 183 & 52 & 453 & 906 & 3424 \\
        9 & 352 & 227 & 675 & 329 & 1583 & 3166 & 13888 \\
        10 & 858 & 827 & 2289 & 1814 & 5788 & 11576 & 57384 \\
        11 & 1968 & 2798 & 7833 & 9327 & 21926 & 43852 & 242152 \\
        12 & 4754 & 9510 & 25961 & 45782 & 86007 & 172014 & 1035296 \\
        13 & 11049 & 31482 & 86257 & 218540 & 347328 & 694656 & 4482748 \\
        14 & 26467 & 104385 & 283466 & 1024521 & 1438839 & 2877678 & 19601012 \\
        \midrule
        Orbit & 45710 & 149312 & 406730 & 1300371 & 1902123 & & \\
        Equiv. & 91420 & 298624 & 813460 & 2600742 & & 3804246 & \\
        Isotopy & 182840 & 1194496 & 3253840 & 20805936 & & & 25437112 \\
        \bottomrule
    \end{tabular}
\end{table}

\begin{table}[]
    \caption{Same as in \autoref{tab:tangles_sym_rot}, but only for tangles with no extra closed components.}
    \label{tab:tangles_sym_rot_0}
    \begin{tabular}{rrrrrrrr}
        \toprule
        \multirow{2}{*}{\#cross} & \multicolumn{4}{c}{Non-mirror symmetry group} & \multicolumn{3}{c}{Total} \\
        \cmidrule(rl){2-5} \cmidrule(rl){6-8}
        & $z\rho$ & $z$ & $\rho$ & $e$ & Orbit & Equiv. & Isotopy  \\
        \midrule
        0 & - & - & - & - & - & - & - \\
        1 & - & - & - & - & - & - & - \\
        2 & 1 & - & - & - & 1 & 2 & 4 \\
        3 & 2 & - & - & - & 2 & 4 & 8 \\
        4 & 4 & - & - & - & 4 & 8 & 16 \\
        5 & 8 & - & 3 & - & 11 & 22 & 56 \\
        6 & 19 & 4 & 6 & - & 29 & 58 & 156 \\
        7 & 38 & 8 & 34 & 6 & 86 & 172 & 584 \\
        8 & 83 & 41 & 101 & 41 & 266 & 532 & 2124 \\
        9 & 173 & 120 & 332 & 218 & 843 & 1686 & 7796 \\
        10 & 373 & 391 & 1027 & 1110 & 2901 & 5802 & 30596 \\
        11 & 781 & 1201 & 3075 & 5044 & 10101 & 20202 & 118036 \\
        12 & 1688 & 3562 & 9156 & 22688 & 37094 & 74188 & 471504 \\
        13 & 3551 & 10551 & 26999 & 98674 & 139775 & 279550 & 1893388 \\
        14 & 7621 & 31091 & 78626 & 422447 & 539785 & 1079570 & 7667372 \\
        \midrule
        Orbit & 14342 & 46969 & 119359 & 550228 & 730898 & & \\
        Equiv. & 28684 & 93938 & 238718 & 1100456 & & 1461796 & \\
        Isotopy & 57368 & 375752 & 954872 & 8803648 & & & 10191640 \\
        \bottomrule
    \end{tabular}
\end{table}

\begin{table}[]
    \caption{Number of all tangle orbits with mirror symmetries divided by number of crossings ("\#cross"). Mirror symmetries are divided into two groups, available either only for X-type or VH-type tangles.\\
    Properties of each symmetry group are listed in \autoref{tab:symmetries}. Tangles in each column/row are added up (”Orbit” column/row). Additionally tangles are counted and added up to equivalence (”Equiv.”) and isotopy (”Isotopy”).}
    \label{tab:tangles_sym_mir}
    \begin{tabular}{rr@{\quad}r@{\quad}r@{\quad}rr@{\quad}r@{\quad}r@{\quad}r@{\quad}r@{\quad}r@{\quad}r@{\quad}rr@{\quad}r@{\quad}r}
        \toprule
        \multirow{2}{*}{\#cross} & \multicolumn{4}{c}{X-only} & 
        \multicolumn{8}{c}{VH-only} & \multicolumn{3}{c}{Total} \\
        \cmidrule(rl){2-5} \cmidrule(rl){6-13} \cmidrule(rl){14-16}
        & $\mathfrak{1}$ & $\overline{\nu}$ &  
        $\eta z$ & $\eta$ & 
        $\mathfrak{0}$ & $\mu z$ & $\mu \rho$ & 
        $\overline{xy}$ & $\overline{z\rho}$ & $\overline{z}$ & $\overline{\rho}$ & $\mu$ & Orbit & Equiv. & Isotopy \\
        \midrule
        0 & - & - & - & - & 1 & - & - & - & - & - & - & - & 1 & 1 & 2 \\
        1 & 1 & - & - & - & - & - & - & - & - & - & - & - & 1 & 1 & 2 \\
        2 & - & - & - & - & - & - & - & - & - & - & - & - & - & - & - \\
        3 & - & - & - & - & - & - & - & - & - & - & - & - & - & - & - \\
        4 & - & - & - & - & 1 & - & - & - & - & - & - & - & 1 & 1 & 2 \\
        5 & - & - & - & - & - & - & - & - & - & - & - & - & - & - & - \\
        6 & - & - & - & - & - & - & - & - & 1 & - & - & - & 1 & 1 & 4 \\
        7 & 2 & - & - & - & - & - & - & - & - & - & - & - & 2 & 2 & 4 \\
        8 & - & - & - & - & 3 & - & - & - & 3 & - & - & - & 6 & 6 & 18 \\
        9 & - & 1 & - & 1 & - & - & - & - & - & - & - & - & 2 & 2 & 12 \\
        10 & - & - & - & - & - & 1 & 2 & - & 9 & 3 & 4 & - & 19 & 19 & 104 \\
        11 & 3 & 3 & 3 & 3 & - & - & - & - & - & - & - & - & 12 & 12 & 54 \\
        12 & - & - & - & - & 10 & 3 & 3 & 1 & 32 & 12 & 15 & 1 & 77 & 77 & 400 \\
        13 & 4 & 8 & 2 & 20 & - & - & - & - & - & - & - & - & 34 & 34 & 208 \\
        14 & - & - & - & - & - & 10 & 17 & 2 & 89 & 77 & 98 & 10 & 303 & 303 & 1952 \\
        \midrule
        Orbit & 10 & 12 & 5 & 24 & 15 & 14 & 22 & 3 & 134 & 92 & 117 & 11 & 459 & & \\
        Equiv. & 10 & 12 & 5 & 24 & 15 & 14 & 22 & 3 & 134 & 92 & 117 & 11 & & 459 & \\
        Isotopy & 20 & 48 & 20 & 192 & 30 & 56 & 88 & 12 & 536 & 736 & 936 & 88 & & & 2762 \\
        \bottomrule
    \end{tabular}
\end{table}

\begin{table}[]
    \caption{Same as in \autoref{tab:tangles_sym_mir}, but only for tangles with no extra closed components.}
    \label{tab:tangles_sym_mir_0}
    \begin{tabular}{rr@{\quad}rr@{\quad}r@{\quad}r@{\quad}r@{\quad}r@{\quad}r@{\quad}r@{\quad}r@{\quad}r@{\quad}rr@{\quad}r@{\quad}r}
        \toprule
        \multirow{2}{*}{\#cross} & \multicolumn{2}{c}{X-only} & 
        \multicolumn{4}{c}{VH-only} & \multicolumn{3}{c}{Total} \\
        \cmidrule(rl){2-3} \cmidrule(rl){4-7} \cmidrule(rl){8-10}
        & $\mathfrak{1}$ & $\mu\nu$ & $\mathfrak{0}$ & $\overline{z}\rho$ & $\overline{z}$ & $\overline{\rho}$ & Orbit & Equiv. & Isotopy \\
        \midrule
        0 & - & - & 1 & - & - & - & 1 & 1 & 2 \\
        1 & 1 & - & - & - & - & - & 1 & 1 & 2 \\
        2 & - & - & - & - & - & - & - & - & - \\
        3 & - & - & - & - & - & - & - & - & - \\
        4 & - & - & - & - & - & - & - & - & - \\
        5 & - & - & - & - & - & - & - & - & - \\
        6 & - & - & - & 1 & - & - & 1 & 1 & 4 \\
        7 & - & - & - & - & - & - & - & - & - \\
        8 & - & - & - & 1 & - & - & 1 & 1 & 4 \\
        9 & - & 1 & - & - & - & - & 1 & 1 & 4 \\
        10 & - & - & - & 3 & 3 & 3 & 9 & 9 & 60 \\
        11 & - & 1 & - & - & - & - & 1 & 1 & 4 \\
        12 & - & - & - & 8 & 7 & 6 & 21 & 21 & 136 \\
        13 & - & 3 & - & - & - & - & 3 & 3 & 12 \\
        14 & - & - & - & 17 & 36 & 34 & 87 & 87 & 628 \\
        \midrule
        Orbit & 1 & 5 & 1 & 30 & 46 & 43 & 126 & & \\
        Equiv. & 1 & 5 & 1 & 30 & 46 & 43 & & 126 & \\
        Isotopy & 2 & 20 & 2 & 120 & 368 & 344 & & & 856 \\
        \bottomrule
    \end{tabular}
\end{table}

\FloatBarrier

\section*{Declarations}

\paragraph{Funding}
This work was supported by the National Science Centre (\#2021/43/I/NZ1/03341 to JIS), University of Warsaw IDUB grant (BOB-IDUB-622-823/2023 to BAG) and the Slovenian Research and Innovation Agency grant N1-0278 to BG. 

\paragraph{Competing interests}
The authors have no relevant financial or non-financial interests to disclose.

\paragraph{Data and code availability}
Database of tangles is available on the website: \url{https://tangleinfo.cent.uw.edu.pl} \cite{tangleinfo}. Programs that run the classification and draw tangles, as well as supplementary material with classification of tangles up to 10 crossings, are available in the GitHub repository: \url{https://github.com/baaagr/Classification_of_algebraic_tangles} \cite{supplement}.

\paragraph{Author contribution}
All authors: conceptualization, funding, writing. 
Bartosz A. Gren: data curation, formal analysis, methodology, software, validation, visualisation.
Bo\v{s}tjan Gabrov\v{s}ek: methodology, software, supervision, validation, visualisation.
Joanna I. Sulkowska: project administration, supervision.




\end{document}